\definecolor{darkblue}{HTML}{111199}
\definecolor{darkgreen}{HTML}{336633}
\definecolor{darkred}{HTML}{993333}
\definecolor{darkpurple}{HTML}{995599}
\def\inc{\operatorname{in}}
\def\pr{\operatorname{pr}}
\def\half{{\textstyle\frac{1}{2}}}
\def\isig{i\!\operatorname{-sig}}
\def\la{\lambda}
\def\eps{\varepsilon}
\def\0{{\bar{0}}}
\def\1{{\bar{1}}}
\def\ba{\text{\boldmath$a$}}
\def\bb{\text{\boldmath$b$}}
\def\bbeta{\text{\boldmath$\beta$}}
\def\bc{\text{\boldmath$c$}}
\def\bi{\text{\boldmath$i$}}
\def\bd{\text{\boldmath$d$}}
\def\bj{\text{\boldmath$j$}}
\def\bsigma{\text{\boldmath$\sigma$}}
\def\g{{\mathfrak{g}}}
\def\b{{\mathfrak{b}}}
\def\t{{\mathfrak{t}}}
\def\h{{\mathfrak{h}}}
\def\C{{\mathcal{C}}}
\def\sO{s\mathcal{O}}
\def\O{\mathcal{O}}
\def\ttQ{{\tt Q}}
\def\ttM{{\tt M}}
\def\ob{\operatorname{ob}}
\def\CT{{\mathtt{CT}}}
\def\u{V}
\def\pr{{\operatorname{pr}}}
\def\sF{s\hspace{-0.26mm}F}
\def\sE{s\hspace{-0.26mm}E}
\def\AHC{{\mathcal{AHC}}}
\def\QHC{{\mathcal{QHC}}}
\def\QH{{\mathcal{QH}}}
\newtheorem{theorem}{Theorem}[section]
\newtheorem{lemma}[theorem]{Lemma}
\newtheorem{corollary}[theorem]{Corollary} 
\theoremstyle{definition}  
\newtheorem{definition}[theorem]{Definition}
\newtheorem{example}[theorem]{Example}
\newtheorem{remark}[theorem]{Remark}
\newcommand{\arxiv}[1]{{\tt{arXiv:#1}}}
\DeclareSymbolFont{bbold}{U}{bbold}{m}{n}
\DeclareSymbolFontAlphabet{\mathbbold}{bbold}
\DeclareMathOperator\End{End}
\DeclareMathOperator\Hom{Hom}
\DeclareMathOperator\Ext{Ext}
\DeclareMathOperator\N{\mathbb{N}}
\DeclareMathOperator\Z{\mathbb{Z}}
\DeclareMathOperator\CC{\mathbb{C}}
\DeclareMathOperator\K{{\Bbbk}}
\DeclareMathOperator\id{{\operatorname{id}}}
\DeclareMathOperator\D{\mathcal{D}}
\DeclareMathOperator\B{\mathbf{B}}
\DeclareMathOperator\wt{wt}
\DeclareMathOperator\WT{\mathbf{wt}}
\DeclareMathOperator\Q{\mathbb{Q}}
\DeclareMathOperator\lmod{\text{-}mod}
\DeclareMathOperator\lsmod{\text{-}smod}
\def\q{{\mathfrak{q}}}
\DeclareMathOperator\ssl{\mathfrak{sl}}
\begin{document}

\title[Type A blocks]{\boldmath Type A blocks of super category $\O$}
\author{Jonathan Brundan and Nicholas Davidson}

\begin{abstract}
We show that every block of category $\O$ for the 
general linear Lie superalgebra $\mathfrak{gl}_{m|n}(\K)$
is equivalent to some corresponding block
of category $\O$ for the queer Lie superalgebra $\q_{m+n}(\K)$.
This implies the truth of the Kazhdan-Lusztig conjecture 
for the so-called type A blocks of category $\O$ for the queer Lie superalgebra
as formulated by Cheng, Kwon and Wang.
\end{abstract}

\thanks{Research supported in part by NSF grant DMS-1161094.}

\maketitle

\section{Introduction}

In this article, we study the analog of the BGG category $\O$ for the Lie superalgebra $\mathfrak{q}_n(\K)$.
Recent work of Chen \cite{C} has reduced most questions about this
category just to the
study of three particular types of block, which we refer to here as
the type A, type B and
type C blocks.
Type B blocks (which correspond to integral weights) 
were investigated already by the first author in
\cite{B1}, leading to a Kazhdan-Lusztig conjecture for characters
of irreducibles in such blocks in terms of certain canonical bases for
the quantum group of type B$_\infty$.
In \cite{CKW}, Cheng, Kwon and Wang formulated 
analogous conjectures for the type A blocks (defined below) 
and the
type C blocks (which correspond to half-integral weights)
in terms of canonical bases of quantum groups of types A$_\infty$ and C$_\infty$,
respectively. 

The main goal of the article is to prove the
Cheng-Kwon-Wang conjecture 
for type A blocks (\cite[Conjecture 5.14]{CKW}).
To do this, we use some tools from higher representation theory
to establish an equivalence of categories
between the type A blocks of category $\mathcal O$ for the Lie
superalgebra $\mathfrak{q}_n(\K)$ and 
integral blocks of category
$\mathcal O$ for a general linear Lie superalgebra. 
This reduces the Cheng-Kwon-Wang
conjecture for type A blocks to 
the Kazhdan-Lusztig conjecture of \cite{B0}, which was 
proved already in \cite{CLW, BLW}. 

Regarding the types B and C conjectures,
Tsuchioka discovered in 2010 that the type B
canonical bases considered in \cite{B1} fail to satisfy appropriate
positivity properties, so that 
the conjecture from \cite{B1} is certainly false. 
Moreover, 
after the first version of \cite{CKW} appeared, Tsuchioka
pointed out similar issues with the type C
canonical bases studied in \cite{CKW}, so that the
Cheng-Kwon-Wang conjecture for type C blocks as
formulated in the first version of their article (\cite[Conjecture 5.10]{CKW}) 
also seems likely to be incorrect.

In fact, the techniques developed in this article
can be applied also to the study of the type C blocks.
This will be spelled out
in a sequel to this paper \cite{BDC}.
In this sequel, we prove a modified version of the Cheng-Kwon-Wang conjecture
for type C blocks: one needs to replace Lusztig's
canonical basis with Webster's ``orthodox basis''
arising from the indecomposable
projective modules of the
tensor product algebras of \cite[$\S$4]{Web}.
This modified conjecture was proposed independently by Cheng,
Kwon and Wang in a revision of their article (\cite[Conjecture
5.12]{CKW}). 
It is not 
as satisfactory as the situation for type A blocks, since there is no
elementary algorithm to compute Webster's basis
explicitly (unlike the canonical basis).
Also in the sequel,
we will prove \cite[Conjecture 5.13]{CKW}, and settle
\cite[Question 5.1]{CKW} by identifying the
category of finite-dimensional half-integer weight representations of
$\q_n(\K)$ 
with a previously known highest weight category (as 
suggested by \cite[Remark 6.7]{CK}).

There is more to be said about type B blocks too; in fact,
these are the most intriguing of all. Whereas the types A and
C blocks carry the additional structure of tensor product
categorifications
in the sense of \cite{LW, BLW}
for the infinite rank Kac-Moody algebras of 
types A$_\infty$ and C$_\infty$, respectively,
the type B blocks produce an example of a tensor product categorification of
{\em odd} type B$_\infty$,
i.e. one needs
a {\em super} Kac-Moody 2-category in the sense of \cite{BE2}.
This will be developed in subsequent work by the second author.

\vspace{2mm}

In the remainder of the introduction, we are going to formulate our main
result for type A blocks in more detail.
To do this, we first briefly recall some basic notions of superalgebra.
Let $\K$ be a ground field which is algebraically
closed of characteristic zero, and fix a choice 
of $\sqrt{-1}\in\K$.
We adopt the language of \cite[Definition 1.1]{BE}:
\begin{itemize}
\item
A
{\em supercategory} is a category enriched in the symmetric
monoidal category of vector superspaces, i.e. the category of $\Z/2$-graded
vector spaces over $\K$ with morphisms that are parity-preserving
linear maps.
\item
Any morphism in a supercategory decomposes uniquely into an even
and an odd morphism as $f = f_\0 + f_\1$. 
 A {\em superfunctor} between
supercategories means a $\K$-linear functor which preserves the
parities of morphisms.  
\item
For superfunctors
$F, G : \C \to \D$,
a \textit{supernatural transformation} $\eta : 
F \Rightarrow G$ is a family of morphisms 
$\eta_M = \eta_{M,\0} + \eta_{M,\1} : FM \to GM$ for each $M\in\ob \C$, such that
$\eta_{N,p}  \circ Ff = (-1)^{|f|p} Gf \circ \eta_{M,p}$
for every homogeneous morphism $f:M \rightarrow N$
in $\C$ and each $p \in \Z/2$.
%It is {\em even} (resp. {\em odd}) if $\eta_M = \eta_{M,\0}$
%(resp. $\eta_M = \eta_{M,\1}$)
%for all $M$.
\end{itemize}
For any supercategory $\C$, the {\em Clifford twist}
$\C^{\CT}$ is the supercategory whose objects are pairs 
$(X, \phi)$ for $X \in \ob \C$
and $\phi \in \End_\C(X)_\1$ with $\phi^2 = \id$, and whose morphisms
$f:(X,\phi) \rightarrow (X',\phi')$ are morphisms $f:X \rightarrow X'$ in
$\C$ 
such that $f_p \circ \phi = (-1)^p \phi'\circ f_p$
for each $p \in \Z/2$.
One can also take Clifford twists of superfunctors and supernatural
transformations (details omitted),
so that $\CT$ is actually a 2-superfunctor from the 2-supercategory of
supercategories to itself in the sense of \cite[Definition 2.2]{BE}.
The following basic lemma is a variation on \cite[Lemma 2.3]{KKT}.

\vspace{2mm}
\noindent
{\bf Lemma.} 
{\em
Suppose $\C$ is a supercategory such that
\begin{itemize}
\item
$\C$ is additive;
\item $\C$ is $\Pi$-complete, i.e. every object of $\C$ is the target
  of an odd isomorphism;
\item all even idempotents split.
\end{itemize}
%%% I guess we never use this, but is there any reason to impose a relationship
%between $\Pi$ and $\zeta$, like
% $f \circ \zeta_X = (-1)^{|f|} \zeta_{Y} \circ \Pi(f)$ for any f : X \to Y$. 
Then the supercategories $\C$ and $(\C^\CT)^\CT$ are superequivalent.
}
\vspace{2mm}

For example, suppose that $A$ is a locally unital superalgebra, i.e. an
associative superalgebra $A = A_\0 \oplus A_\1$ equipped with a
distinguished collection $\{1_x\:|\:x \in X\}$ of mutually orthogonal
even idempotents
such that $A = \bigoplus_{x, y \in X} 1_y A 1_x$.
Then there is a supercategory $A\lsmod$
consisting of finite-dimensional left $A$-supermodules $M$ which are 
locally unital in the sense that $M = \bigoplus_{x \in X} 1_x M$.
Even morphisms in $A\lsmod$ are parity-preserving 
linear maps such that $f(av) = a f(v)$ for all $a \in A, v \in M$; odd morphisms are parity-reversing linear maps
such that $f(av) = (-1)^{|a|} a f(v)$ for homogeneous $a$.
There is an obvious isomorphism between the Clifford twist 
$A\lsmod^\CT$  of this supercategory
and the supercategory $A\otimes C_1 \lsmod$,
where $C_1$ denotes the rank one Clifford superalgebra generated by an
odd involution $c$, and $A \otimes C_1$ is the usual braided tensor
product of superalgebras.
Hence, $(A\lsmod^\CT)^\CT$ is isomorphic to $A \otimes C_2 \lsmod$
where $C_2 := C_1 \otimes C_1$ is the rank two Clifford
superalgebra generated by 
$c_1 := c \otimes 1$ and $c_2
:= 1 \otimes c$.
In this situation, the above lemma is obvious as $A \otimes C_2$ is
isomorphic to the matrix superalgebra $M_{1|1}(A)$, which is Morita
superequivalent to $A$.

Now fix $n \geq 1$ and let
$\mathfrak{g} = \g_\0\oplus\g_\1$ be the Lie superalgebra $\q_n(\K)$.
Recall this is the subalgebra of the general linear Lie superalgebra
$\mathfrak{gl}_{n|n}(\K)$ consisting of all matrices of block form
\begin{equation}\label{blockform}
\left( \begin{array}{c|c} A & B \\\hline B & A \end{array} \right).
\end{equation}
Let $\mathfrak{b}$ (resp. $\mathfrak{h}$) be the standard Borel (resp.
Cartan) subalgebra of $\mathfrak{g}$ consisting of all matrices (\ref{blockform})
in which $A$ and $B$ are upper triangular (resp. diagonal).
Let $\mathfrak{t} := \mathfrak{h}_{\0}$.
We let $\delta_1,\dots,\delta_n$ be the basis for $\mathfrak{t}^*$
such that $\delta_i$ picks out the $i$th diagonal entry of the matrix
$A$. Fix also a {sign sequence}
$\bsigma = (\sigma_1,\dots,\sigma_n)$ with each $\sigma_r \in \{\pm \}$,
and a scalar $z \in \K$ such that $2z \notin \Z$.
{\bf We stress that all of our subsequent notation depends implicitly on these choices.}

It will be convenient to index certain weights in $\mathfrak{t}^*$
by the set $\mathbf{B} := \Z^n$ via the following 
{\em weight dictionary}:
for $\bb = (b_1,\dots,b_n) \in \B$
let
\begin{equation}\label{wtdict1}
\lambda_\bb := \sum_{r=1}^n \lambda_{\bb,r} \delta_r
\quad\text{where}\quad \lambda_{\bb,r} := \sigma_r(z+b_r).
\end{equation}
We let $\sO$
be the category of all $\mathfrak{g}$-supermodules $M$ satisfying the
following properties:
\begin{itemize}
\item
$M$ is finitely generated as a $\mathfrak{g}$-supermodule;
\item
$M$ is locally finite-dimensional over $\mathfrak{b}$;
\item
$M$ is semisimple over $\t$ with all weights 
of the form $\lambda_\bb$ for $\bb \in \B$.
\end{itemize}
Morphisms in $\sO$ 
are arbitrary 
(not necessarily even) $\mathfrak{g}$-supermodule homomorphisms, so
that it is a supercategory.
It also admits a parity switching functor $\Pi$.
The {\em type A blocks} mentioned earlier are the blocks of 
$\sO$ for all possible choices of $\bsigma$ and $z$.

For each $\bb \in \B$, there is an
irreducible supermodule $L(\bb) \in \ob \sO$
of highest weight $\lambda_\bb$. 
Note the highest weight space of
$L(\bb)$ is not 
one-dimensional: it is some sort of irreducible Clifford supermodule
over the Cartan subalgebra $\h$.
Every irreducible supermodule in $\sO$ is isomorphic 
to $L(\bb)$ for a unique $\bb \in \B$
via a homogeneous (but not necessarily even) isomorphism.
If $n$ is odd, $L(\bb)$ is of type
$\ttQ$, i.e., $L(\bb)$ is evenly isomorphic to its parity flip $\Pi L(\bb)$. 
When $n$ is even, the irreducible $L(\bb)$ is of type $\ttM$, and
we should explain how to
distinguish it from its parity flip.
For each $i \in \Z$, we fix a choice $\sqrt{z+i}$ of a square root of $z+i$,
then set $\sqrt{-(z+i)} := (-1)^{i} \sqrt{-1} \sqrt{z+i}$. The key
point about this is that
\begin{equation}\label{violin}
\sqrt{-(z+i)} \sqrt{-(z+i+1)} = \sqrt{z+i}\sqrt{z+i+1}
\end{equation}
for each $i \in \Z$.
Let $d_r' \in \g_\1$ be the matrix of the form (\ref{blockform}) such that $A =
0$ and $B$ is the $r$th diagonal matrix unit.
Then, for even $n$, 
we assume that $L(\bb)$ is chosen so that $d_1' \cdots d_n'$ acts on
any even highest weight vector by the scalar
$(\sqrt{-1})^{n/2} \sqrt{\lambda_{\bb,1}}
\cdots
\sqrt{\lambda_{\bb,n}}$.
This determines $L(\bb)$ uniquely up to even isomorphism.

Turning our attention to the category on the other side of our main
equivalence,
let $\mathfrak{g}'$ be the general linear Lie superalgebra 
consisting
of $n \times n$ matrices under the supercommutator, with
$\Z/2$-grading defined by declaring that the $rs$-matrix unit is
even if $\sigma_r = \sigma_s$ and odd if $\sigma_r \neq
\sigma_s$.
Let $\mathfrak{b}'$ (resp. $\mathfrak{t}'$) be the standard Borel
(resp. Cartan) subalgebra consisting of upper triangular
(resp. diagonal) matrices in $\mathfrak{g}'$.
As before, we let $\delta_1',\dots,\delta_n'$ be the basis for
$(\mathfrak{t}')^*$ defined by the diagonal coordinate functions.
We introduce another {\em weight dictionary}
 (which in this setting is some ``signed $\rho$-shift''): 
for $\bb \in \B$, let
\begin{equation}\label{wtdict}
\lambda_\bb' := 
\sum_{r=1}^n \lambda_{\bb,r}' \delta_r'
\quad\text{where}\quad
\lambda_{\bb,r}' := 
\sigma_r \left(b_r
+ \sigma_1 1 +\cdots+\sigma_{r-1} 1 +\half(\sigma_r 1-1)\right).
\end{equation}
Let $\sO'$ be the supercategory of $\g'$-supermodules $M'$ such that
\begin{itemize}
\item $M'$ is finitely generated as a $\g'$-supermodule;
\item $M'$ is locally finite-dimensional over $\b'$;
\item $M'$ is semisimple over $\t'$ with all weights of the form
  $\lambda'_\bb$
for $\bb \in \B$.
\end{itemize}
Note $\sO'$ is the sum of all of the blocks of the 
usual category $\O$ for $\g'$
corresponding to integral weights of $\t'$.
For each $\bb \in \B$, there is a unique (up to even isomorphism)
irreducible
supermodule $L'(\bb) \in \ob \sO'$ 
generated by a homogeneous 
highest weight vector of weight $\lambda'_\bb$ and
parity 
$
\sum_{\sigma_r = -} \lambda'_{\bb,r} \pmod{2}.
$

\vspace{2mm}
\noindent
{\bf Main Theorem.}
{\em
If $n$ is even then there is a superequivalence
$\mathbb{E}:\sO \rightarrow \sO'$ 
such that 
$\mathbb{E} L(\bb)$ is evenly isomorphic to $L'(\bb)$ for each $\bb
\in \B$.
If $n$ is odd then there is a superequivalence
$\mathbb{E}:\sO \rightarrow (\sO')^\CT$
such that $\mathbb{E} L(\bb)$ is evenly isomorphic to
$(L'(\bb)\oplus \Pi L'(\bb), \phi)$ for either of the two choices of odd
involution $\phi$.
}
\vspace{2mm}

If $\C$ is any $\K$-linear category,
we let $\C \oplus \Pi \C$ be the supercategory whose objects are
formal direct sums
$V_1 \oplus \Pi V_2$ for $V_1, V_2 \in \ob \C$,
with morphisms $V_1 \oplus \Pi V_2 \rightarrow W_1 \oplus \Pi W_2$ 
being matrices 
of the form 
$f = 
\left( \begin{array}{cc} f_{11} & f_{12} \\ f_{21} & f_{22} \end{array} \right)$
for $f_{ij} \in \Hom_\C(V_j,W_i)$.
The $\Z/2$-grading is defined so that $f_\0 = 
\left( \begin{array}{cc} f_{11} & 0 \\ 0 & f_{22} \end{array} \right)$
and $f_\1 = \left( \begin{array}{cc} 0 & f_{12} \\f_{21} & 0 \end{array}
\right)$.
For example, if $\C$ is the category $A\lmod$ of
finite-dimensional locally unital
modules over some locally unital
algebra $A$, then $\C \oplus \Pi \C$ may be identified with the
category $A\lsmod$, viewing $A$ as a purely even superalgebra.

It was noticed originally in \cite{B0} that the category $\sO'$
can be decomposed in this way:
let $\O'$ be full subcategory of $\sO'$
consisting of all $\g'$-supermodules whose $\lambda'_\bb$-weight space
is concentrated in parity 
$\sum_{\sigma_r = -} \lambda'_{\bb,r} \pmod{2}$ for each
$\bb \in \B$; obviously, there are no non-zero odd morphisms between objects of $\O'$.
Then 
$\sO'$ decomposes as $\sO' = \O' \oplus \Pi \O'$.
Moreover, $\O'$ 
is a highest weight category with 
irreducible objects $\{L'(\bb)\:|\:\bb \in \B\}$
indexed by the set $\B$ as above.
In fact, $\O'$ equivalent to $A\lmod$ for a locally
unital algebra $A$ such that the left ideals $A
1_x$ and right ideals $1_x A$ are finite-dimensional
for all distinguished idempotents $1_x \in A$.
Although not needed here,
the results of \cite{BLW} imply further that the algebra $A$ may be
equipped with a $\Z$-grading making it into a (locally unital) {\em Koszul algebra}; this leads to the definition of a graded analog of
the category $\O'$ similar in spirit to Soergel's graded lift of
classical category $\O$ as in e.g. \cite{BGS}.

Combining these remarks with our Main Theorem, we deduce:
\begin{itemize}
\item
For even $n$, the
category $\sO$ decomposes as $\sO = \O \oplus \Pi \O$, where $\O$
is the Serre subcategory generated by the irreducible supermodules
$\{L(\bb)\:|\:\bb \in \B\}$ introduced above (but {\em not} their
parity flips).
Moreover, $\O$ is equivalent to $\O'$, hence, to 
the category $A\lmod$ where $A$ is the
Koszul algebra just introduced.
\item
For odd $n$, $\sO$ is superequivalent
to $A \otimes C_1 \lsmod$, viewing $A$ 
as a purely even superalgebra.
This implies that the underlying category $\underline{\sO}$
consisting of the same objects as $\sO$ but only its even morphisms
is equivalent to $A \lmod$, hence, to $\O'$.
\end{itemize}
As already mentioned, the Kazhdan-Lusztig conjecture for $\sO$
formulated in \cite{CKW} follows immediately from this discussion
together with the Kazhdan-Lusztig conjecture for $\O'$ 
proved in \cite{CLW, BLW}.

There is also a parabolic analog of our Main Theorem.
Let $\nu = (\nu_1,\dots,\nu_l)$ be a composition of $n$ 
with $\sigma_r = \sigma_s$ for all $\nu_1+\cdots+\nu_{k-1}+1 \leq
r < s \leq \nu_1+\cdots+\nu_k$ and $k=1,\dots,l$. 
Let $\mathfrak{p}_\nu$ be the corresponding standard parabolic
subalgebra of $\g$, i.e. the matrices $A$ and $B$
in (\ref{blockform}) are block upper triangular with diagonal blocks of shape $\nu$.
Let $\sO_\nu$ be the
corresponding parabolic analog of the category $\sO$, i.e. it is the
full subcategory of $\sO$ consisting of all supermodules that are
locally finite-dimensional over $\mathfrak{p}_\nu$.
Similarly, there is a standard parabolic subalgebra $\mathfrak{p}'_\nu$ of
$\g'$ consisting of block upper triangular matrices 
of shape $\nu$, and we let $\sO'_\nu$ be the analogously defined parabolic
subcategory of $\sO'$. 
Various special cases of the following corollary for maximal parabolics/two-part compositions
$\nu$ were known before; see \cite[$\S$4]{C} and \cite{CC}.

\vspace{2mm}
\noindent
{\bf Corollary.} {\em
If $n$ is even then $\sO_\nu$ is superequivalent to $\sO_\nu'$.
If $n$ is odd then $\sO_\nu$ is superequivalent to $(\sO_\nu')^\CT$.
}

\begin{proof}
This follows from our Main Theorem on observing that
$\sO_\nu$ and $\sO_\nu'$ may be
defined equivalently as the Serre subcategories of $\sO$ and $\sO'$
generated by the irreducible supermodules $\{L(\bb), \Pi L(\bb)\}$
and $\{L'(\bb), \Pi L'(\bb)\}$, respectively, for $\bb \in \B$ such that
the following hold for $r \notin\{\nu_1,\nu_1+\nu_2,\dots,\nu_1+\cdots+\nu_l\}$:
\begin{itemize}
\item
if $\sigma_r = +$ then $b_r > b_{r+1}$; 
\item
if $\sigma_r =
-$ then $b_r < b_{r+1}$.
\end{itemize}
(This assertion is a well-known consequence of 
the construction of
parabolic Verma supermodules in $\sO$ and $\sO'$, respectively; see
e.g. \cite{M}.)
\end{proof}

In order to prove the Main Theorem, we will exploit
the following powerful theorem established in \cite{BLW}:
the category $\O'$ defined above 
is the unique (up to strongly equivariant equivalence)
 $\mathfrak{sl}_\infty$-tensor product
categorification of the module 
$$
V^{\otimes \bsigma} := V^{\sigma_1} \otimes
\cdots \otimes V^{\sigma_n},
$$ 
where $V^{+}$ denotes the natural
$\mathfrak{sl}_\infty$-module and $V^{-}$ denotes its dual.
Hence, 
for even $n$,
it suffices to show that the category $\sO$
decomposes as 
$\O \oplus \Pi \O$
for some $\mathfrak{sl}_\infty$-tensor
product categorification $\O$ of $V^{\otimes \bsigma}$.
For odd $n$, we show instead that $\sO^{\CT}$
decomposes as
$\O \oplus \Pi \O$
for some $\mathfrak{sl}_\infty$-tensor
product categorification $\O$ of $V^{\otimes \bsigma}$, 
hence, $\sO^\CT$ is superequivalent to $\sO'$.
The Main Theorem then follows on taking Clifford
twists, using also the lemma formulated above.
In both the even and odd cases, our argument relies crucially also on an
application of the main result of \cite{KKT}.

\iffalse
We remark finally that the more general 
parabolic supercategories $\sO_\nu$ 
may be used to give new realizations of the more general sorts of
$\mathfrak{sl}_\infty$-tensor product categorifications considered in
\cite{BLW} involving exterior powers of $V^{\pm}$.
\fi

\vspace{2mm}
\noindent{\em Acknowledgements.}
We thank the authors of \cite{CKW} for several helpful discussions at the
KIAS conference
``Categorical Representation Theory and Combinatorics" in December 2015,
and Shunsuke Tsuchioka for sharing his
computer-generated counterexamples to the type B and C conjectures.

\section{Verma supermodules}\label{s2}
We continue with $n \geq 1$, $\bsigma = (\sigma_1,\dots,\sigma_n) \in
\{\pm 1\}^n$,
and $z \in \K$ with $2z \notin \Z$.
Let $m := \lceil n/2 \rceil$, so that $n=2m$ or $2m-1$.
Also set
\begin{equation}\label{jdef}
I := \Z,
\qquad
J := \left\{\pm \sqrt{z+i}\sqrt{z+i+1}\:\Big|\:i \in I\right\},
\end{equation}
where the square roots are as chosen in the introduction.

In this paragraph, we work with the Lie superalgebra
$\widehat\g := \mathfrak{gl}_{2m|2m}(\K)$ in order to introduce
some coordinates.
Let $\widehat{U}$ be the natural $\widehat\g$-supermodule 
with standard basis
$u_1,\dots,u_{4m}$. Write
$x_{r,s}$ for the $rs$-matrix unit in $\widehat\g$,
so $x_{r,s} u_t = \delta_{s,t} u_r$.
We denote the odd basis vectors 
$u_{2m+1}, \dots, u_{4m}$ instead by $u_1',\dots,u_{2m}'$.
For $1 \leq r,s \leq 2m$, we set
\begin{align}\label{cough1}
e_{r,s} &:= x_{r,s} + x_{2m+r,2m+s},
&
e'_{r,s} &:= x_{r,2m+s}+x_{2m+r,s},\\\label{cough2}
f_{r,s} &:= x_{r,s} - x_{2m+r,2m+s},
&
f'_{r,s} &:=  x_{r,2m+s} - x_{2m+r,s}.\\\intertext{Also let}
d_r &:= e_{r,r},&
d'_r &:= e_{r,r}'.
\end{align}
Then 
we have that
\begin{align}\label{f1}
e_{r,s} u_t &= \delta_{s,t} u_r,
& 
e_{r,s} u_t' &= \delta_{s,t} u_r',   &  e_{r,s}' u_t&= \delta_{s,t}
u_r', & e_{r,s}' u_t' &= \delta_{s,t}
u_r,\\ \label{f2}
f_{r,s} u_t &= \delta_{s,t} u_r, 
& 
f_{r,s} u_t' &= -\delta_{s,t} u_r', & f_{r,s}' u_t&= -\delta_{s,t} u_r', & f_{r,s}' u_t' &= \delta_{s,t}
u_r.
\end{align}
Finally let $\widehat{U}^*$
be the dual supermodule to $\widehat{U}$, with
basis $\phi_1,\dots,\phi_{2m}, \phi_1',\dots,\phi_{2m}'$
that is dual to the basis $u_1,\dots,u_{2m}, u_1',\dots,u_{2m}'$. 
We have that
\begin{align}\label{f3}
e_{r,s} \phi_t &=  - \delta_{r,t} \phi_s, & e_{r,s} \phi_t' &= -\delta_{r,t} \phi_s',&
e_{r,s}' \phi_t &=  - \delta_{r,t} \phi'_s, & e_{r,s}' \phi'_t &=
\delta_{r,t} \phi_s,\\\label{f4}
 f_{r,s} \phi_t &=  -\delta_{r,t} \phi_s, & f_{r,s} \phi_t' &= \delta_{r,t} \phi_s',
&
f_{r,s}' \phi_t &=  -\delta_{r,t} \phi'_s, & f_{r,s}' \phi'_t &=
-\delta_{r,t} \phi_s. 
\end{align}

\vspace{1mm}

When $n$ is even, we continue with 
$\g, \b$ and $\h$ as in the introduction, so $\g$ 
is the subalgebra of $\widehat\g$ spanned by
$\{e_{r,s}, e_{r,s}'\:|\:1 \leq r,s \leq n\}$, while
$\h$ has basis $\{d_r, d_r'\:|\:1 \leq r \leq 2m\}$.
However, when $n$ is odd,
it is convenient 
to change some of this notation.
The point of doing this is to unify our treatment of even and odd $n$ as much
as possible in the remainder of the article.
So, if $n$ is odd, we henceforth {\em redefine} $\g, \b$ and $\h$
as follows:
\begin{itemize}
\item $\g$ denotes the Lie superalgebra $\mathfrak{q}_n(\K) \oplus
  \mathfrak{q}_1(\K)$, which we identify with the subalgebra of
  $\widehat{\g}$
spanned by
$\{e_{r,s}, e_{r,s}'\:|\:1 \leq r,s \leq n\} \sqcup
\{d_{2m}, d_{2m}'\}$.
\item $\b$ is the Borel subalgebra
spanned by
$\{e_{r,s}, e_{r,s}'\:|\:1 \leq r \leq s \leq n\} \sqcup
\{d_{2m}, d_{2m}'\}$;
\item
$\h$ is the Cartan subalgebra
spanned by $\{d_r, d_r'\:|\:1 \leq r \leq 2m\}$.
\end{itemize}
In both the even and the odd cases, 
the subspaces $U \subseteq \widehat{U}$ and $U^* \subseteq \widehat{U}^*$
spanned by $u_1,\dots,u_n, u_1',\dots, u_n'$ 
and $\phi_1,\dots,\phi_n, \phi_1',\dots,\phi_n'$, respectively,
may be viewed as
$\g$-supermodules.
Also set $\t := \h_\0$ and let
$\delta_1,\dots,\delta_{2m}$ be the basis for $\t^*$ that is dual to
the basis $d_1,\dots,d_{2m}$ for $\t$.  

For $\bb \in \B$, we define
$\lambda_\bb$
according to (\ref{wtdict1}) if $n$ is even, but redefine it in the odd
case 
as
\begin{equation}\label{wtdict2}
\lambda_\bb := \sum_{r=1}^n \lambda_{\bb,r} \delta_r  + \delta_{2m}
\quad\text{where}\quad
\lambda_{\bb,r} := \sigma_r(z+b_r).
\end{equation}
We also introduce the tuple $\bd_r \in \B$ 
which has $1$ as its $r$th entry and $0$ in all other places,
so that
\begin{equation}\label{handier}
\lambda_\bb \pm \delta_r 
= \lambda_{\bb \pm \sigma_r \bd_r}.
\end{equation}
Then we define $\sO$ exactly
as we did in the introduction but using the current choices for $\g,
\b, \t$ and $\lambda_\bb$.
This is exactly the same category as in the introduction when $n$
is even, but when $n$ is odd our new version of $\sO$ is superequivalent to the
Clifford twist $\sO^\CT$ of the supercategory from the introduction.
Indeed, if $M$ is a supermodule in our new $\sO$, the restriction of
$M$ to the subalgebra $\q_n(\K)$, equipped with the odd involution
defined by the action of $d_{2m}'$, gives an object of the
Clifford twist of the supercategory from before.

We proceed to define some irreducible $\h$-supermodules
$\{\u(\bb)\:|\:\bb \in \B\}$.
Let $C_2$ be the rank 2 Clifford superalgebra with odd generators
$c_1,c_2$
subject to the relations $c_1^2 = c_2^2 = 1, c_1 c_2 = - c_2 c_1$.
Let $\u$ be the irreducible $C_2$-supermodule
on basis $v, v'$ with $v$ even and $v'$ odd, 
and action defined by $$
c_1 v = v', 
\quad
c_1 v' = v, 
\quad
c_2 v = \sqrt{-1} v', 
\quad
c_2 v' = -
\sqrt{-1} v.
$$
Then, for $\bb \in \B$, we set
$\u(\bb) := \u^{\otimes m}$.
For $1 \leq r \leq n$, we let
$d_r$ act by the scalar $\lambda_{\bb,r}$
and
$d_r'$ act by left
multiplication by
$\sqrt{\lambda_{\bb,r}} \,\id^{\otimes (s-1)}\otimes c_{r+1-2s}
\otimes \id^{ \otimes (m-s)}$
where $s := \lfloor r/2 \rfloor$
(and we are using the usual superalgebra sign rules).
In the odd case, we also need to define the actions of $d_{2m}$ and
$d_{2m}'$: these are the identity and the odd involution
$\id^{\otimes(m-1)} \otimes c_2$, respectively.
In all cases, $\u(\bb)$ is 
an irreducible
$\h$-supermodule of type $\ttM$, and its
$\t$-weight is $\lambda_\bb$.
Moreover, by construction,
$d_1'\cdots d_{2m}'$ acts on any even (resp. odd) vector in $\u(\bb)$
as 
$c_\bb$ (resp. $-c_\bb$), where
\begin{equation}\label{y1}
c_\bb
:=
(\sqrt{-1})^m \sqrt{\la_{\bb,1}}\cdots \sqrt{\la_{\bb,n}}.
\end{equation}
The signs here distinguish $\u(\bb)$ from its parity flip.
The following is well known; e.g. see \cite[$\S$1.5.4]{CW}.

\begin{lemma}\label{projectivity}
For $\bb \in \B$, 
any $\h$-supermodule that is semisimple of weight $\lambda_\bb$ over $\t$
decomposes as a direct sum of copies of the supermodules $\u(\bb)$ and $\Pi \u(\bb)$.
\end{lemma}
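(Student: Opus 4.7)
The plan is to reduce the statement to the classical representation theory of the Clifford superalgebra. Let $M$ be an $\h$-supermodule that is $\t$-semisimple of weight $\lambda_\bb$. Since the even elements $d_1,\dots,d_{2m}$ are central in $\h$ and act on $M$ by the scalars $\lambda_{\bb,1},\dots,\lambda_{\bb,n}$ (with $d_{2m}$ acting as $1$ in the odd case), the action of $U(\h)$ on $M$ factors through a finite-dimensional quotient superalgebra $A_\bb$ generated by the odd images of $d_1',\dots,d_{2m}'$. The brackets $[d_r',d_s']_+ = 2\delta_{r,s}d_r$ in $\h$ descend to the relations $(d_r')^2 = \lambda_{\bb,r}$ and $d_r' d_s' + d_s' d_r' = 0$ for $r\neq s$. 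The assumption $2z \notin \Z$ guarantees $\lambda_{\bb,r} \neq 0$, so after rescaling $d_r' \mapsto d_r'/\sqrt{\lambda_{\bb,r}}$ (using the sign conventions behind (\ref{violin})), I would identify $A_\bb$ with $C_2^{\otimes m}$, the Clifford superalgebra on $2m$ odd anticommuting involutions.

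With this identification, I would invoke the classical structure theorem (as in \cite[$\S$1.5.4]{CW}): $C_2^{\otimes m}$ is simple as a superalgebra, isomorphic to the matrix superalgebra $M_{2^{m-1}|2^{m-1}}(\K)$. Consequently all $C_2^{\otimes m}$-supermodules are completely reducible, and up to even isomorphism there are exactly two simple supermodules of dimension $2^m$, namely a single type-$\ttM$ simple $W$ and its parity flip $\Pi W$.

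Finally I would verify that $\u(\bb) = \u^{\otimes m}$ is evenly isomorphic to $W$, so that $\Pi\u(\bb)$ then corresponds to $\Pi W$. This is essentially built into the definition: after the rescaling above, the generator $d_r'/\sqrt{\lambda_{\bb,r}}$ acts on $\u^{\otimes m}$ as $\id^{\otimes(s-1)}\otimes c_{r+1-2s}\otimes \id^{\otimes(m-s)}$ with $s = \lfloor r/2 \rfloor$ (and similarly $\id^{\otimes(m-1)}\otimes c_2$ for $d_{2m}'$ in the odd case), which is exactly the standard outer-tensor-product realization of the simple $C_2^{\otimes m}$-supermodule. No serious obstacle arises; the only point deserving a little care is keeping the sign conventions consistent when pulling the scalars $\sqrt{\lambda_{\bb,r}}$ through the tensor factors, and confirming the type-$\ttM$ classification for even-rank Clifford superalgebras (so that the outer tensor product does not accidentally split as $\ttQ$). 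Once this identification is in hand, the complete reducibility of $A_\bb$-supermodules delivers the desired decomposition of $M$ into copies of $\u(\bb)$ and $\Pi\u(\bb)$.
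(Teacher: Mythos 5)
Your proposal is correct and follows essentially the same route as the paper: identify $\t$-semisimple $\h$-supermodules of weight $\lambda_\bb$ with supermodules over the rank-$2m$ Clifford superalgebra $C_{2m} = C_2^{\otimes m}$ (via the rescaled odd generators), then invoke simplicity of $C_{2m}$ as a superalgebra to get complete reducibility into copies of the unique simple and its parity flip. Incidentally, your identification $C_{2m} \cong M_{2^{m-1}|2^{m-1}}(\K)$ is the correct one (both sides have dimension $4^m$); the paper's statement that $C_{2m} \cong M_{2^{n-1}|2^{n-1}}(\K)$ appears to be a typo, since that has dimension $4^n$, which disagrees with $\dim C_{2m} = 4^m$ for $n > 1$.
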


\begin{proof}
We can identify $\h$-supermodules that are semisimple of weight $\lambda_\bb$
over $\t$ with supermodules over the Clifford superalgebra
$C_{2m} := C_2^{\otimes m}$, so that
$d_r'\:(r=1,\dots,n)$ acts in the same way as
$\sqrt{\lambda_{\bb,r}} \,
\id^{\otimes (s-1)}\otimes c_{r+1-2s} \otimes \id^{\otimes (m-s)}$
where $s := \lfloor r/2 \rfloor$, and in the odd case $d_{2m}'$ acts as 
$\id^{\otimes(m-1)} \otimes c_2$.
The lemma then follows since
$C_{2m}$ is simple, indeed, it is isomorphic to the matrix superalgebra 
$M_{2^{n-1}|2^{n-1}}(\K)$.
\end{proof}

Let $\underline{\sO}$ denote the {\em underlying category}
consisting of the same
objects as $\sO$ but only the even morphisms.
This is obviously an 
Abelian category.
In order to parametrize its irreducible objects explicitly, 
we introduce the \textit{Verma supermodule} $M(\bb)$ for $\bb \in \B$
by setting
\begin{equation}
M(\bb) := U(\g) \otimes_{U(\b)} \u(\bb),
\end{equation}
where we are viewing $\u(\bb)$ as a $\b$-supermodule by inflating 
along the surjection $\b \twoheadrightarrow \h$.  
The weight $\lambda_\bb$ is the highest weight of $M(\bb)$
in the usual 
\textit{dominance order}
on $\t^*$, i.e. $\lambda \leq \mu$ if and only if $\mu-\lambda\in
\bigoplus_{r=1}^{n-1} \N(\delta_r-\delta_{r+1})$.
Note also that we can distinguish $M(\bb)$ from its parity flip in the same
way as for $\u(\bb)$: the element $d_1'\cdots d_{2m}'$ acts on any
even (resp. odd) vector in the highest weight space 
$M(\bb)_{\lambda_\bb}$ as the scalar
$c_\bb$ (resp. $-c_\bb$).

As usual, the Verma supermodule $M(\bb)$ has a unique
irreducible quotient denoted $L(\bb)$.
Thus, $L(\bb)$ is an irreducible $\g$-supermodule of 
highest weight
$\lambda_\bb$, and the action of $d_1' \cdots
d_{2m}'$ on its highest weight space distinguishes it from its parity
flip.
The irreducible supermodules
$\{L(\bb), \Pi L(\bb)\:|\:\bb \in \B\}$
give a complete set of pairwise inequivalent irreducible supermodules
in $\underline{\sO}$.
The endomorphism algebras of
these objects are all one-dimensional,
so they are irreducibles of type $\mathtt{M}$.
Moreover, 
by a standard argument involving restricting to the underlying even Lie
algebra
as in \cite[Lemma 7.3]{B2},
we get that $\underline{\sO}$ is a {Schurian category}
in the following sense (cf. \cite[$\S$2.1]{BLW}):

\begin{definition}\label{schurcat}
A {\em Schurian category} is a $\K$-linear  Abelian category in which all objects
have finite length, there are enough projectives and injectives,
and the endomorphism algebras of irreducible objects are all one-dimensional.
\end{definition}

Let $x^T$ denote
the usual transpose of a matrix $x \in \widehat{\g}$.  This induces an antiautomorphism
of $\g$, i.e. we have that $[x,y]^T = [y^T, x^T]$.
Given $M \in \ob \sO$, 
we can view the direct sum 
$\bigoplus_{\bb \in \B} M_{\lambda_\bb}^*$ 
of the linear duals of the weight spaces of $M$
as a $\g$-supermodule 
with action defined by $(x f)(v) := f(x^T v)$. 
Let 
$M^\star$ be the object of $\sO$ obtained from this by applying also
the parity switching functor $\Pi^m$.
Making the obvious definition
on morphisms,
this gives us a contravariant superequivalence
$\star : \sO \to \sO$.
We have incorporated the parity flip into this definition
in order to get the following lemma; this is also checked in
\cite[Lemma 7]{F}
or \cite[Lemma 4.7]{C} but we include 
the proof to consolidate our parity conventions.

\begin{lemma}\label{star} For $\bb \in \B$, we have that $L(\bb)^\star \cong
L(\bb)$
via an even isomorphism.
\end{lemma}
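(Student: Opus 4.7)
The plan is to argue that $L(\bb)^\star$ is irreducible of highest weight $\lambda_\bb$, so is isomorphic to $L(\bb)$ or $\Pi L(\bb)$, and then to eliminate the parity flip by checking the action of $d_1'\cdots d_{2m}'$ on the highest weight space.

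First I would check $L(\bb)^\star \in \ob\sO$, using that $\star$ involves only the finite-dimensional weight spaces together with the usual matrix transpose. Because $\star$ is a contravariant superequivalence of $\sO$, it sends irreducibles to irreducibles. Since the transpose fixes $\h$ pointwise, every $\t$-weight of $M$ also appears in $M^\star$ with the same multiplicity, and $\lambda_\bb$ remains the (unique) highest weight of $L(\bb)^\star$. By the classification of irreducibles of $\underline{\sO}$ recalled just before Definition \ref{schurcat}, this gives $L(\bb)^\star \cong L(\bb)$ or $\Pi L(\bb)$ via an even isomorphism.

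The heart of the proof is then the computation of the scalar $c$ by which $d_1'\cdots d_{2m}'$ acts on an even highest weight vector of $L(\bb)^\star$; comparison with $(\ref{y1})$ will pin down the correct case. I would fix an even highest weight vector $v_0\in L(\bb)_{\lambda_\bb}$ with $d_1'\cdots d_{2m}' v_0 = c_\bb v_0$ and the corresponding odd vector $v_1$ with $d_1'\cdots d_{2m}' v_1 = -c_\bb v_1$, and work with the dual basis $f_0,f_1$ of $L(\bb)^*_{\lambda_\bb}$. Two signs enter. On the one hand, each $d_r'$ is transpose-invariant but the $d_r'$ anticommute pairwise, so reversing the product yields $(d_1'\cdots d_{2m}')^T = (-1)^{\binom{2m}{2}}\, d_1'\cdots d_{2m}' = (-1)^m\, d_1'\cdots d_{2m}'$; this shows $d_1'\cdots d_{2m}' f_0 = (-1)^m c_\bb f_0$ in $L(\bb)^*$. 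On the other hand, $L(\bb)^\star = \Pi^m L(\bb)^*$, so when $m$ is odd the even highest weight vector of $L(\bb)^\star$ is the parity-shifted $f_1$, and one computes $d_1'\cdots d_{2m}' f_1 = (-1)^{m+1} c_\bb f_1$ by the same argument. In both parities of $m$, the scalar on the even highest weight vector of $L(\bb)^\star$ works out to $c_\bb$.

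Since this matches the scalar characterising $L(\bb)$ (and not $\Pi L(\bb)$, which would give $-c_\bb$), we conclude $L(\bb)^\star \cong L(\bb)$ via an even isomorphism. The main obstacle is just bookkeeping the two sources of signs — the $(-1)^m$ from reversing a product of $2m$ mutually anticommuting odd elements under transpose, and the $\Pi^m$ built into the definition of $\star$ — and verifying that they are precisely compensated by the overall parity of $m$, which is exactly why the parity shift $\Pi^m$ was inserted into the definition of $\star$ in the first place.
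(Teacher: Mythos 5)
Your proposal is correct and takes essentially the same route as the paper's proof: reduce to computing the scalar by which $d_1'\cdots d_{2m}'$ acts on an even highest weight vector of $L(\bb)^\star$, then observe that the sign $(-1)^m$ coming from reversing the product of the pairwise anticommuting odd diagonal elements under transpose exactly cancels against the parity shift from the $\Pi^m$ built into $\star$. The only stylistic difference is that you split into cases according to the parity of $m$, whereas the paper does the bookkeeping in one step by directly pairing $f$ against a highest weight vector $v$ of parity $m\pmod 2$.
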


\begin{proof}
By weight considerations, we either have that $L(\bb)^\star$ is evenly isomorphic to
$L(\bb)$ or to $\Pi L(\bb)$.
To show that the former holds, 
take an even highest weight vector
$f \in L(\bb)^\star$.
We must show that $d_1'\cdots d_{2m}' f = c_\bb f$ (rather than
$-c_\bb f$).
Remembering the twist by $\Pi^m$ in our definition of $\star$,
there is a highest weight vector $v \in L(\bb)$ of parity $m\pmod{2}$
such that $f(v) = 1$.
Then we get that 
$$
(d_1' \cdots d_{2m}'  f)(v) = 
f(d_{2m}' \cdots d_1' v)
= (-1)^m f(d_1'\cdots d_{2m}' v) = c_\bb f(v).
$$
Hence, $d_1'\cdots d_{2m}' f = c_\bb f$.
\end{proof}

Let $P(\bb)$ be a projective cover of $L(\bb)$ in $\underline{\sO}$.
There are even epimorphisms $P(\bb) \twoheadrightarrow M(\bb)
\twoheadrightarrow L(\bb)$.
Applying $\star$, we deduce that
there are even monomorphisms
$L(\bb) \hookrightarrow M(\bb)^\star \hookrightarrow P(\bb)^\star$.
The supermodule
$P(\bb)^\star$ is an injective hull of $L(\bb)$, while
$M(\bb)^\star$ is the {\em dual Verma supermodule}.
The following lemma is well known; it follows from central
character considerations (e.g. see \cite[Theorem 2.48]{CW}) plus the universal property of Verma supermodules.

\begin{lemma}\label{typicaldominant}
Suppose that $\lambda_{\bb}$ is dominant and typical, by which we mean that
the following hold for all
$1 \leq r < s \leq
n$:
\begin{itemize}
\item
if $\sigma_r = \sigma_s$ then
$\lambda_{\bb,r} \geq \lambda_{\bb,s}$;
\item
if
$\sigma_r \neq \sigma_s$
then
$\lambda_{\bb,r}+\lambda_{\bb,s} \neq 0$.
\end{itemize}
Then $M(\bb) = P(\bb)$.
\end{lemma}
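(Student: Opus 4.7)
The plan is to show that $M(\bb)$ is projective in $\underline{\sO}$. Combined with the fact that $M(\bb)$ has simple head $L(\bb)$, this yields $M(\bb) \cong P(\bb)$ as required. Projectivity will be verified via a standard universal-property lifting argument, with the typicality and dominance hypotheses entering through central character considerations.

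Specifically, by \cite[Theorem 2.48]{CW} the supercategory $\sO$ decomposes as a direct sum of blocks indexed by central characters, so it suffices to lift along surjections $\pi : N \twoheadrightarrow N'$ lying in the block containing $M(\bb)$. The typicality condition $\lambda_{\bb,r} + \lambda_{\bb,s} \neq 0$ for $\sigma_r \neq \sigma_s$, combined with dominance and the linkage principle for $\g$, forces $\lambda_\bb$ to be strictly maximal in the dominance order among highest weights $\lambda_\bc$ of irreducibles in this block; consequently, every weight occurring in any object of this block is $\leq \lambda_\bb$. Now fix an even morphism $\varphi : M(\bb) \to N'$. The universal property of Verma supermodules presents $\varphi$ as an even $\h$-equivariant map $f' : \u(\bb) \to (N')^\n_{\lambda_\bb}$, where $\n$ denotes the nilpotent radical of $\b$ and the superscript indicates $\n$-invariants. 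Since $\pi$ is weight-preserving and surjective, and since $\u(\bb)$ is projective as an $\h$-module of weight $\lambda_\bb$ by Lemma~\ref{projectivity}, the map $f'$ lifts to an $\h$-equivariant $f : \u(\bb) \to N_{\lambda_\bb}$. For each positive root vector $e \in \n$ of weight $\alpha$, the composite $e \circ f$ takes values in $\ker(\pi)_{\lambda_\bb + \alpha}$; but $\ker(\pi)$ lies in our block, which has no weights strictly above $\lambda_\bb$, so $e \circ f = 0$. Thus $f$ in fact lands in $N^\n_{\lambda_\bb}$, and the universal property converts $f$ into an even lift $\tilde\varphi : M(\bb) \to N$ of $\varphi$, establishing projectivity.

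The main obstacle is the maximality claim: that no weight $\lambda_\bc$ with the same central character as $\lambda_\bb$ strictly exceeds $\lambda_\bb$ under dominance. This is the real content of typicality and is what \cite[Theorem 2.48]{CW} is invoked to supply; once it is granted, the remainder is the classical universal-property lifting familiar from ordinary category $\mathcal{O}$, with the Clifford structure on the highest weight space of $M(\bb)$ transported along via Lemma~\ref{projectivity}.
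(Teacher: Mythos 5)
Your proposal is correct and fleshes out, in a self-contained way, exactly the two ingredients the paper's one-line proof invokes: the block decomposition by central characters (\cite[Theorem 2.48]{CW}) and the universal property of Verma supermodules. The lifting argument at the highest weight space is the standard one, and your use of Lemma~\ref{projectivity} to split the surjection $N_{\lambda_\bb}\twoheadrightarrow N'_{\lambda_\bb}$ as $\h$-supermodules is the right way to handle the Clifford-module structure on $\u(\bb)$. The one place you quite rightly flag as the substantive input is the maximality of $\lambda_\bb$ in its central-character block; for the reader's benefit it might be worth noting why it holds: for typical $\lambda_\bb$ the linkage class of $\lambda_\bb$ among weights of the form $\lambda_\bc$, $\bc\in\B$, consists of permutations of $\lambda_\bb$ that necessarily preserve the sign sequence $\bsigma$ (because $2z\notin\Z$), and the dominance hypothesis says $\lambda_\bb$ is sorted decreasing within each of the subsets $\{r:\sigma_r=+\}$ and $\{r:\sigma_r=-\}$, whence every such permutation is $\leq\lambda_\bb$ in the dominance order on $\t^*$. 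With that supplied, the proof is complete and follows the paper's intended route.
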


Let $\sO^\Delta$ be the full subcategory of $\sO$ consisting of all
supermodules possessing a Verma flag,
i.e. for which there is a filtration $0 = M_0 \subset \cdots \subset M_l = M$ 
with sections $M_k / M_{k-1}$
that are evenly isomorphic 
to
$M(\bb)$'s or $\Pi M(\bb)$'s for $\bb \in \B$.
Since the classes of all 
$M(\bb)$ and $\Pi M(\bb)$ are linearly independent in the Grothendieck
group of $\underline{\sO}$, the multiplicities $(M:M(\bb))$ and
$(M:\Pi M(\bb))$
of $M(\bb)$ and $\Pi M(\bb)$
in a Verma flag of $M$
are independent of the particular choice of flag.

The following lemma follows from the general theory
developed in \cite[$\S$4]{B2}. We include a self-contained 
proof here in order to make this article independent of \cite{B2}.
Later on, we will also give a self-contained proof of another fundamental
fact established in \cite{B2}, namely, that the projective supermodules
$P(\ba)$ have
Verma flags; cf. Theorem~\ref{mainsplitthm}.

\begin{lemma}\label{hom}
For $M \in \ob\sO^\Delta$ and $\bb \in \B$,
we have that
\begin{align*}
(M:M(\bb)) &= \dim \Hom_{\sO}(M, M(\bb)^\star)_\0,\\
(M:\Pi M(\bb)) &= \dim \Hom_{\sO}(M, M(\bb)^\star)_\1.
\end{align*}
Also, any direct summand of $M \in \ob \sO^\Delta$ possesses a Verma flag.
\end{lemma}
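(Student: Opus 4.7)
I would prove the Hom--multiplicity formula by induction on the length of a Verma flag, with the base case being $M$ a single Verma $M(\ba)$ or its parity flip $\Pi M(\ba)$. For the base case, Frobenius reciprocity for $M(\ba) = U(\g) \otimes_{U(\b)} \u(\ba)$ identifies $\Hom_\sO(M(\ba), M(\bb)^\star)$ with $\Hom_\b(\u(\ba), M(\bb)^\star) \cong \Hom_\h\bigl(\u(\ba), (M(\bb)^\star)^\n\bigr)$, where $\n$ denotes the nilradical of $\b$. The PBW decomposition $M(\bb) \cong U(\n^-) \otimes \u(\bb)$ of $\h$-supermodules, together with the $\Pi^m$-twist built into the definition of $\star$ (which is exactly what makes $\u(\bb)^\star$ evenly isomorphic to $\u(\bb)$, in the spirit of the proof of Lemma~\ref{star}), yields $(M(\bb)^\star)^\n \cong \u(\bb)$ as $\h$-supermodules. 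Since $\u(\ba)$ and $\u(\bb)$ are irreducible of type $\ttM$, this Hom is one-dimensional and concentrated in even parity when $\ba = \bb$, and zero otherwise. Applying $\Pi$ to the source produces the analogue for $M = \Pi M(\ba)$.

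For the inductive step, given $M \in \ob \sO^\Delta$ with Verma flag of length $l > 1$, I choose a weight $\lambda_\ba$ maximal in the dominance order among those appearing in the flag and arrange it so that there is a short exact sequence
\begin{equation*}
0 \to M'' \to M \to Q \to 0, \qquad Q \in \{M(\ba), \Pi M(\ba)\},
\end{equation*}
with $M'' \in \ob \sO^\Delta$ of Verma length $l - 1$. Applying the contravariant functor $\Hom_\sO(-, M(\bb)^\star)$ and combining the inductive hypothesis for $M''$ with the base case for $Q$, the desired Hom formula for $M$ follows as soon as we establish $\Ext^1_\sO(Q, M(\bb)^\star) = 0$.

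The main obstacle is thus the Ext-vanishing $\Ext^1_\sO(M(\ba), M(\bb)^\star) = 0$. I would prove it by direct analysis of extensions: given $0 \to M(\bb)^\star \to X \to M(\ba) \to 0$, lift an even highest weight generator of $M(\ba)$ to some $\tilde v \in X_{\lambda_\ba}$. The $\n$-action on $\tilde v$ takes values in weight spaces of $M(\bb)^\star$ strictly above $\lambda_\ba$; since the $\n$-invariants of $M(\bb)^\star$ are concentrated in weight $\lambda_\bb$ (its socle is $L(\bb)$), a recursive weight-by-weight correction of $\tilde v$ by elements of $M(\bb)^\star$ kills the obstruction cocycle and produces a $\b$-invariant lift, which by Frobenius reciprocity generates a submodule of $X$ evenly isomorphic to $M(\ba)$, splitting the extension. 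The Clifford structure on $\h$ requires tracking parities via the scalar $c_\bb$ but introduces no essentially new obstructions.

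For closure of $\sO^\Delta$ under direct summands, I proceed by induction on Verma length, using the Hom formula just established. The base case is the indecomposability of a single Verma, which follows from $\End_\sO(M(\ba)) = \K$ (even). For $M = M' \oplus M'' \in \ob \sO^\Delta$ of length $l > 1$, I pick $\lambda_\ba$ maximal in the dominance order among weights appearing in $M'$; a Frobenius-reciprocity argument applied to an $\h$-summand of $M'_{\lambda_\ba}$ produces a Verma or parity-flipped Verma subobject of $M'$, and its quotient is a direct summand of an object in $\sO^\Delta$ of strictly shorter Verma length, closing the induction.
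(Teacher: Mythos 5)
The base case (computing $\Hom_\sO(M(\ba),M(\bb)^\star)$) via Frobenius reciprocity and the PBW decomposition is fine and essentially parallels the paper's argument. The final closure-under-summands step is the standard Humphreys argument, which the paper also cites, so that is fine too. The genuine gap is in the Ext-vanishing step.

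Your justification for $\Ext^1_\sO(M(\ba), M(\bb)^\star)=0$ is that a ``recursive weight-by-weight correction of $\tilde v$ by elements of $M(\bb)^\star$ kills the obstruction cocycle,'' citing the fact that the $\n$-invariants of $M(\bb)^\star$ are concentrated at weight $\lambda_\bb$. But that is an $H^0(\n,-)$ statement, whereas what you need is $H^1$-vanishing at weight $\lambda_\ba$: concretely, when $\lambda_\bb>\lambda_\ba$, you need to find $w\in M(\bb)^\star_{\lambda_\ba}$ with $x\tilde v = xw$ for all $x\in\n$, and nothing about the socle of $M(\bb)^\star$ tells you the class of the cocycle $x\mapsto x\tilde v$ in $H^1(\n,M(\bb)^\star)_{\lambda_\ba}$ is trivial. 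A correction argument of this shape \emph{can} be pushed through, but the needed input is that $M(\bb)^\star$ is cofree over $U(\n)$ (dual to freeness of $M(\bb)$ over $U(\n^-)$, after twisting by transpose), which is a strictly stronger fact than the one you invoke and which you do not establish. The paper sidesteps the $H^1$-computation entirely with a case split: either $\lambda_\bb$ is a maximal weight of the total module $X$, in which case any lift of $\u(\bb)$ into $X_{\lambda_\bb}$ is automatically $\n$-invariant by Lemma~\ref{projectivity} and a splitting comes from the universal property, or else $\lambda_\ba$ is maximal, in which case applying $\star$ reduces to the first case and one dualizes back. You should either insert this case distinction (which also requires you to split off $M(\ba)$ as a \emph{sub}object when $\lambda_\ba$ is maximal, not $M(\bb)^\star$, after applying $\star$) or supply the cofreeness of the dual Verma over $U(\n)$ explicitly.
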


\begin{proof}
The first part of the lemma follows by induction on the length of the
Verma flag, using the following two observations:
for all $\ba,\bb \in \B$ we have that
\begin{itemize}
\item
$\Hom_{\sO}(M(\ba), M(\bb)^\star)$ is zero if $\ba \neq \bb$,
and it is one-dimensional of even parity if $\ba =\bb$;
\item
$\Ext^1_{\sO}(M(\ba), M(\bb)^\star) = 0$.
\end{itemize}
To check these, 
for the first one, we use the universal property of $M(\ba)$ to see
that
$\Hom_{\sO}(M(\ba), M(\bb)^\star)$ is zero unless $\lambda_\ba \leq \lambda_\bb$.
Similarly, on applying $\star$, it is zero unless $\lambda_\bb \leq \lambda_\ba$.
Hence, we may assume that $\ba = \bb$.
Finally, any non-zero homomorphism $M(\ba) \rightarrow M(\ba)^\star$
must send the head to the socle, 
so
$\Hom_{\sO}(M(\ba), M(\ba)^\star)$
is evenly isomorphic to $\Hom_{\sO}(L(\ba), L(\ba)^\star)$,
which is one-dimensional and even thanks to Lemma~\ref{star}.
For the second property,
we must show that all short exact
sequences in $\underline{\sO}$ of the form
$0 \to M(\ba)^\star \to M \to
M(\bb) \to 0$ or $\Pi M(\ba)^\star \to M \to
M(\bb) \to
0$
split.
Either $\lambda_{\ba}$ or $\lambda_{\bb}$ is a maximal weight of $M$.
In the latter case, 
using also Lemma~\ref{projectivity}, we can use the universal property
of $M(\bb)$ to construct a splitting of 
$M \twoheadrightarrow M(\bb)$.
In the former case, we apply $\star$, the
resulting short exact sequence splits as before, and then we dualize again.

The final statement of the lemma may be proved by mimicking the argument for semisimple
Lie algebras from
\cite[\S 3.2]{H}.
\end{proof}

 \section{Special projective superfunctors}\label{s3}

Next, we investigate the superfunctors
$U \otimes -$
and
$U^* \otimes -$
defined by tensoring with the $\g$-supermodules $U$ and $U^*$ introduced in
the previous section.
They clearly preserve the properties of being finitely generated over
$\g$, locally finite-dimensional over $\b$, and semisimple over $\t$.
Since the $\t$-weights of $U$ and $U^*$ are $\delta_1,\dots,\delta_n$
and $-\delta_1,\dots,-\delta_n$, respectively, and
using (\ref{handier}), we get
for each $M \in \ob \sO$ that 
all weights of 
$U \otimes M$ and $U^* \otimes M$
are of the form $\lambda_\bb$ for $\bb \in \B$.
Hence, these superfunctors send objects of $\sO$ to objects of $\sO$,
i.e.
we have defined
\begin{equation}\label{sfdef}
\sF := U \otimes -:\sO \rightarrow \sO,
\qquad
\sE := U^* \otimes -:\sO \rightarrow \sO.
\end{equation}
Recalling the notation (\ref{cough1})--(\ref{cough2}), let
\begin{equation}\label{omegadef}
\omega := 
\sum_{r,s=1}^n \left(f_{r,s}\otimes e_{s,r}  -
  f_{r,s}'\otimes e_{s,r}'\right)
\in U(\widehat{\g}) \otimes U(\g).
\end{equation}
This is essentially the same tensor as in \cite[(7.2.1)]{HKS}.
Left multiplication by $\omega$ (resp.\ by $-\omega$) defines
a linear map
$x_M:U\otimes M \rightarrow U \otimes M$
(resp.\ $x_M^*:U^* \otimes M \rightarrow U^* \otimes M$)
for each $\g$-supermodule $M$.
The following lemma shows that 
these maps 
define a pair of even supernatural transformations
\begin{equation}\label{xdef}
x:\sF \Rightarrow \sF,
\qquad
x^*:\sE \Rightarrow \sE.
\end{equation}
This is also established in the proof of \cite[Theorem 7.4.1]{HKS}, but we give a
self-contained argument since it explains the origin of these maps.

\begin{lemma}
The linear maps $x_M$ and $x_M^*$ just defined are 
even $\g$-supermodule homomorphisms.
\end{lemma}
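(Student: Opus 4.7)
The plan is a direct verification. First observe that $\omega$ is an even element of $U(\widehat\g) \otimes U(\g)$: each term $f_{r,s}\otimes e_{s,r}$ is a tensor product of two even elements, and each $f'_{r,s}\otimes e'_{s,r}$ is a product of two odd elements. Consequently $x_M$ and $x_M^*$ are even linear maps automatically, and the content of the lemma is that left multiplication by $\omega$ on $U\otimes M$ (resp.\ by $-\omega$ on $U^*\otimes M$) commutes with the diagonal action $\Delta(y) := y\otimes 1 + 1\otimes y$ for every $y\in\g$.

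Since $\g$ is generated as a Lie superalgebra by $\{e_{p,q}, e'_{p,q} : 1\leq p,q\leq n\}$, supplemented by the central elements $d_{2m}, d'_{2m}$ in the odd-$n$ case (which supercommute with every $e_{r,s}$ and $e'_{r,s}$ appearing in $\omega$), it suffices to check that $[\Delta(y),\omega]=0$ as an operator on $U\otimes M$ for $y = e_{p,q}$ and $y = e'_{p,q}$. Starting from (\ref{cough1})--(\ref{cough2}) and the standard super-matrix-unit relations in $\widehat\g=\gl_{2m|2m}$, one extracts the bracket formulae
\begin{align*}
[e_{p,q},g_{r,s}] &= \delta_{q,r}\,g_{p,s} - \delta_{p,s}\,g_{r,q} \quad\text{for } g\in\{e,e',f,f'\},\\
[e'_{p,q},e_{r,s}] &= \delta_{q,r}\,e'_{p,s} - \delta_{p,s}\,e'_{r,q},\qquad
[e'_{p,q},f_{r,s}] = -\delta_{q,r}\,f'_{p,s} - \delta_{p,s}\,f'_{r,q},\\
[e'_{p,q},e'_{r,s}]_+ &= \delta_{q,r}\,e_{p,s} + \delta_{p,s}\,e_{r,q},\qquad
[e'_{p,q},f'_{r,s}]_+ = -\delta_{q,r}\,f_{p,s} + \delta_{p,s}\,f_{r,q},
\end{align*}
valid for all $1\leq p,q,r,s\leq n$.

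Substituting these identities into $[\Delta(e_{p,q}),\omega]$, the contributions $[e_{p,q},f_{r,s}]\otimes e_{s,r}$ and $f_{r,s}\otimes[e_{p,q},e_{s,r}]$ telescope against each other after a swap of summation indices $r\leftrightarrow s$, and the same holds for the $f'\otimes e'$ sum; hence $[\Delta(e_{p,q}),\omega]=0$. The case $y=e'_{p,q}$ is more delicate: when $e'_{p,q}$ crosses an even factor of $\omega$, the resulting bracket involves an odd element and so produces a term living in the opposite sum of $\omega$, and vice versa. The four $\delta$-terms in $[\Delta(e'_{p,q}),\sum f_{r,s}\otimes e_{s,r}]$ then match up exactly with the four $\delta$-terms in $-[\Delta(e'_{p,q}),\sum f'_{r,s}\otimes e'_{s,r}]$; the overall minus sign in (\ref{omegadef}), together with the positivity of the odd-odd super-brackets and the flipped signs in $[e'_{p,q},f_{r,s}]$ and $[e'_{p,q},f'_{r,s}]_+$, are precisely what is needed for every term to cancel in pairs.

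The verification for $x_M^*$ is formally identical: by (\ref{f3})--(\ref{f4}), $f_{r,s}$ and $f'_{r,s}$ act on $U^*$ with an overall sign opposite to their action on $U$, and this is absorbed by the prescribed prefactor of $-1$ in the definition of $x_M^*$, so the computation reduces word-for-word to the one above. The main obstacle is simply the bookkeeping of super-signs in the odd-generator case, where the minus sign in (\ref{omegadef}) must conspire correctly with the signs produced by the odd-odd super-brackets in $\widehat\g$; once the bracket table is in hand the rest is routine.
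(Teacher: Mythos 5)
Your proof is correct, but it takes a genuinely different route from the paper's. The paper does not check the supercommutation of $\omega$ with $\Delta(\g)$ directly. Instead it observes that the odd element $f'=\sum_t f'_{t,t}\in U(\widehat\g)$ supercommutes with $U(\g)$, that the odd Casimir tensor $\Omega'=\sum_{r,s}(e_{r,s}\otimes e'_{s,r}-e'_{r,s}\otimes e_{s,r})$ supercommutes with $\Delta(U(\g))$, and hence that the even tensor $\Omega=\Omega'(f'\otimes 1)$ commutes with $\Delta(U(\g))$ for free. The paper then closes by computing (using (\ref{f1})--(\ref{f4})) that left multiplication by $\Omega$ on $U\otimes M$ (resp.\ $U^*\otimes M$) coincides with left multiplication by $\omega$ (resp.\ $-\omega$). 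That argument buys conceptual clarity — it explains where $\omega$ comes from and at the same time explains the sign discrepancy between the two formulas — whereas yours is a direct term-by-term cancellation. Both are valid; I verified your bracket table (all five identities check out against the formulas in (\ref{cough1})--(\ref{cough2})) and confirmed that the four $\delta$-terms coming from $[\Delta(e'_{p,q}),\sum f_{r,s}\otimes e_{s,r}]$ cancel in pairs against those from $-[\Delta(e'_{p,q}),\sum f'_{r,s}\otimes e'_{s,r}]$ exactly as you describe.

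One small clean-up: the last paragraph about $x^*_M$ overcomplicates things. Your computation shows $[\Delta(y),\omega]=0$ as an identity in $U(\widehat\g)\otimes U(\g)$, and this alone already forces left multiplication by \emph{any} scalar multiple of $\omega$ to commute with the diagonal $\g$-action on both $U\otimes M$ and $U^*\otimes M$. There is therefore no need to appeal to (\ref{f3})--(\ref{f4}) or to argue that the $-1$ prefactor is ``absorbed''; it is simply irrelevant to the homomorphism property. The sign becomes relevant only later (Lemma~\ref{eigenvalues}) to fix which eigenvalue labels which Verma flavour, which is where the paper's $\Omega$-based derivation actually explains it.
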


\begin{proof}
The odd element
\begin{equation}\label{oddz}
f' := \sum_{t=1}^n f_{t,t}'
\in U(\widehat\g)
\end{equation}
supercommutes with the elements of $U(\g)$.
Hence, $f' \otimes 1 \in U(\widehat{\g}) \otimes U(\g)$
supercommutes with 
the image of the comultiplication
$\Delta:U(\g) \rightarrow U(\g) \otimes U(\g) \subset U(\widehat\g) \otimes U(\g)$.
The odd Casimir tensor
$$
\Omega' := 
\sum_{r,s=1}^n 
\left(e_{r,s} \otimes e_{s,r}' - e_{r,s}' \otimes e_{s,r}\right)
\in U(\g) \otimes U(\g)
$$
also supercommutes with the image of $\Delta$.
Hence, the even tensor
$$
\Omega := \Omega' (f' \otimes 1)
=
-\sum_{r,s,t=1}^n \left(e_{r,s}  f_{t,t}'\otimes e_{s,r}' + e_{r,s}'  f_{t,t}'\otimes
  e_{s,r} \right)
\in U(\widehat\g) \otimes U(\g)
$$
commutes with the image of $\Delta$.
Consequently, left multiplication by $\Omega$ defines even
$\g$-supermodule endomorphisms 
$x_M:U \otimes M \rightarrow U \otimes M$
and $x_M^*:U^* \otimes M\rightarrow U^* \otimes M$.
It remains to observe that these endomorphisms agree with the 
linear maps defined by left multiplication by $\omega$ and
$-\omega$, respectively.
Indeed, by a calculation using (\ref{f1})--(\ref{f4}),
the elements  $e_{r,s} f'_{t,t}$ and $e_{r,s}' f_{t,t}'$ of
$U(\widehat\g)$ act on
vectors in $U$ (resp. $U^*$) in the same way as $\delta_{s,t}
f_{r,s}'$  and $-\delta_{s,t} f_{r,s}$
(resp.
$-\delta_{r,t} f_{r,s}'$ and $\delta_{r,t} f_{r,s}$), respectively.
\end{proof}

\begin{lemma} \label{eigenvalues} Suppose that $\bb \in \B$
and let $M := M(\bb)$.
\begin{enumerate}
\item 
There is 
a filtration $$
0 = M_0 \subset M_1 \subset \cdots \subset M_n = 
U \otimes M
$$ 
with
$M_{t} / M_{t-1} \cong M(\bb + \sigma_t \bd_t) \oplus \Pi M(\bb +
\sigma_t \bd_t)$
for each $t=1,\dots,n$.  The endomorphism
$x_{M}$ preserves this filtration,
and the induced endomorphism
of $M_t / M_{t-1}$ is diagonalizable
 with exactly two 
eigenvalues $\pm \sqrt{\lambda_{\bb, t}}\sqrt{\lambda_{\bb,t}+1}$.
Its $\sqrt{\lambda_{\bb, t}}\sqrt{\lambda_{\bb,t} + 1}$-eigenspace is
 evenly isomorphic to $M(\bb + \sigma_t \bd_t)$,
while the other eigenspace is evenly isomorphic to 
$\Pi M(\bb + \sigma_t \bd_t)$.
\item There is 
a filtration 
$$
0 = M^{n} \subset \cdots \subset M^1 \subset M^0 = 
U^*
\otimes  M
$$ 
with
$M^{t-1} / M^{t} \cong M(\bb - \sigma_t \bd_t) \oplus \Pi M(\bb -
\sigma_t \bd_t)$
for each $t=1,\dots,n$.  The endomorphism
$x^*_{M}$ preserves this filtration,
and the induced endomorphism
of $M^{t-1} / M^{t}$ is diagonalizable
 with exactly two 
eigenvalues $\pm \sqrt{\lambda_{\bb, t}}\sqrt{\lambda_{\bb,t}-1}$.
Its $\sqrt{\lambda_{\bb, t}}\sqrt{\lambda_{\bb,t} - 1}$-eigenspace is
 evenly isomorphic to $M(\bb - \sigma_t \bd_t)$,
while the other eigenspace is evenly isomorphic to 
$\Pi M(\bb - \sigma_t \bd_t)$.
 \end{enumerate}
\end{lemma}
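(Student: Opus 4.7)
My plan is to prove (1) in detail; part (2) will be entirely analogous, with $U^*$ filtered in the opposite direction using $U^*_t := \sspan(\phi_s, \phi'_s : s \geq t)$ and an extra sign from $x^*_M = -\omega\cdot$, which produces $\lambda_{\bb,t}-1$ rather than $\lambda_{\bb,t}+1$ in the eigenvalue formula.

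For the filtration, I will use a standard induction-from-$\b$ argument. By the tensor identity, $U \otimes M(\bb) \cong \Ind_\b^\g(U|_\b \otimes \u(\bb))$. Since $\b$ is upper triangular, $U$ has a $\b$-submodule filtration $0 = U_0 \subset U_1 \subset \cdots \subset U_n = U$ where $U_t := \sspan(u_s,u'_s : s \leq t)$; each subquotient $U_t/U_{t-1}$ is a two-dimensional $\b$-supermodule of $\t$-weight $\delta_t$ on which the nilpotent radical acts trivially. Tensoring with $\u(\bb)$ and applying Lemma~\ref{projectivity} identifies $(U_t/U_{t-1})\otimes\u(\bb)$ with $\u(\bb+\sigma_t\bd_t) \oplus \Pi\u(\bb+\sigma_t\bd_t)$ as $\h$-supermodules. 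Exactness of induction then yields the filtration $M_t := \Ind_\b^\g(U_t \otimes \u(\bb))$ with the required sections.

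For the endomorphism, a direct computation from (\ref{f1})--(\ref{f2}) gives
\[ \omega\cdot(u_t \otimes v_\bb) = \sum_r u_r \otimes e_{t,r} v_\bb + \sum_r u_r' \otimes e_{t,r}' v_\bb \]
for $v_\bb \in \u(\bb)$, with an analogous formula for $\omega\cdot(u'_t \otimes v_\bb)$. Terms with $r > t$ vanish (highest-weight condition), while terms with $r \leq t$ lie in $M_t$ via the identity $u_r \otimes e_{t,r} v_\bb = e_{t,r}(u_r \otimes v_\bb) - u_t \otimes v_\bb$ and its odd analog; hence $x_M$ preserves the filtration. Upon projection to $M_t/M_{t-1}$, each pair $u_r \otimes e_{t,r} v_\bb$ and $u'_r \otimes e'_{t,r} v_\bb$ with $r < t$ cancels (one can check via the tensor identity that they reduce respectively to $-u_t \otimes v_\bb$ and $+u_t \otimes v_\bb$ mod $M_{t-1}$), leaving only the $r=t$ contributions:
\begin{align*}
\omega(u_t\otimes v) &\equiv \lambda_{\bb,t}\,u_t\otimes v + u'_t\otimes d'_t v \pmod{M_{t-1}}, \\
\omega(u'_t\otimes v) &\equiv u_t\otimes d'_t v - \lambda_{\bb,t}\,u'_t\otimes v \pmod{M_{t-1}}.
\end{align*}
Consequently $x_M$ acts on the highest-weight space of $M_t/M_{t-1}$ as the matrix $\begin{pmatrix} \lambda_{\bb,t} & d'_t \\ d'_t & -\lambda_{\bb,t} \end{pmatrix}$ with entries in $\End_\h(\u(\bb))$; since $(d'_t)^2 = \lambda_{\bb,t}\,\id$, this matrix squares to $\lambda_{\bb,t}(\lambda_{\bb,t}+1)\,\id$, which is nonzero because $2z\notin\Z$ forces $\lambda_{\bb,t}\notin\{0,-1\}$. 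Therefore the matrix is diagonalizable with eigenvalues $\pm\sqrt{\lambda_{\bb,t}}\sqrt{\lambda_{\bb,t}+1}$.

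The main obstacle is the final identification: determining which eigenspace is evenly isomorphic to $M(\bb+\sigma_t\bd_t)$ rather than to its parity flip. This will require computing the scalar by which $d'_1\cdots d'_{2m}$ acts on an even highest-weight vector in each eigenspace and comparing it to $\pm c_{\bb+\sigma_t\bd_t}$ from (\ref{y1}). The key relation (\ref{violin}) is precisely the compatibility needed for the chosen square roots of $\lambda_{\bb,r}$ and $\lambda_{\bb+\sigma_t\bd_t,r}$ to combine consistently; a careful Clifford-algebra tracking then confirms that the $+\sqrt{\lambda_{\bb,t}}\sqrt{\lambda_{\bb,t}+1}$-eigenspace realizes $M(\bb+\sigma_t\bd_t)$ itself.
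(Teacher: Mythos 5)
Your approach parallels the paper's proof: the same tensor-identity filtration, the same reduction to the highest-weight space of $M_t/M_{t-1}$, and essentially the same computation of eigenvalues. But you stop exactly where the paper does the real work. The last paragraph of your argument — "a careful Clifford-algebra tracking then confirms that the $+\sqrt{\lambda_{\bb,t}}\sqrt{\lambda_{\bb,t}+1}$-eigenspace realizes $M(\bb+\sigma_t\bd_t)$ itself" — is precisely the claim that needs a proof, not a promise of one, because the final sentence of the lemma distinguishing the two eigenspaces as $M(\bb+\sigma_t\bd_t)$ versus $\Pi M(\bb+\sigma_t\bd_t)$ is the content that feeds into Theorem~\ref{maintfthm}. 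The paper settles it by choosing the purely even basis $(u_t\otimes v_i,\, u_t'\otimes d_t'v_i)$ of the subspace $S_{i,t}$, writing the matrix of $x_t$ on it as $A=\left(\begin{smallmatrix}\lambda_{\bb,t}&\lambda_{\bb,t}\\1&-\lambda_{\bb,t}\end{smallmatrix}\right)$, computing that $d_1'\cdots d_{2m}'$ has matrix $\frac{c_\bb}{\lambda_{\bb,t}}A$ on $S_{i,t}$ (and $-\frac{c_\bb}{\lambda_{\bb,t}}A$ on the odd companion $S'_{i,t}$), and then observing that on the $+\sqrt{\lambda_{\bb,t}}\sqrt{\lambda_{\bb,t}+1}$-eigenvector $d_1'\cdots d_{2m}'$ acts by $\tfrac{c_\bb}{\lambda_{\bb,t}}\sqrt{\lambda_{\bb,t}}\sqrt{\lambda_{\bb,t}+1}=c_{\bb+\sigma_t\bd_t}$. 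That chain of equalities is what pins down the even isomorphism type; you need to actually carry it out.

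One further correction: you appeal to (\ref{violin}) for this step, but that identity is not used here. The only square-root compatibility needed for this lemma is $\bigl(\sqrt{\lambda_{\bb,t}}\bigr)^2=\lambda_{\bb,t}$, which holds by construction for both signs $\sigma_t=\pm$, so that $\sqrt{\lambda_{\bb,t}}\sqrt{\lambda_{\bb,t}+1}/\lambda_{\bb,t}=\sqrt{\lambda_{\bb,t}+1}/\sqrt{\lambda_{\bb,t}}$ and hence $\frac{c_\bb}{\lambda_{\bb,t}}\sqrt{\lambda_{\bb,t}}\sqrt{\lambda_{\bb,t}+1}=c_{\bb+\sigma_t\bd_t}$ by (\ref{y1}). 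The relation (\ref{violin}) enters later, in Theorem~\ref{maintfthm}, where the eigenvalue $\pm\sqrt{\lambda_{\bb,t}}\sqrt{\lambda_{\bb,t}+1}$ has to be matched against $j=\sqrt{z+i}\sqrt{z+i+1}$ for both $\sigma_t=+$ and $\sigma_t=-$.

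Minor remarks on presentation: your cancellation of the $r<t$ terms is fine — you are implicitly using that, e.g., $u_r\otimes e_{t,r}v_\bb + u_r'\otimes e'_{t,r}v_\bb = e_{t,r}(u_r\otimes v_\bb) - e'_{t,r}(u_r'\otimes v_\bb)\in M_{t-1}$ — but it is cleaner (and closer to the paper) to observe directly that $f_{r,s}\otimes e_{s,r}-f'_{r,s}\otimes e'_{s,r}$ annihilates the generators unless $r\le s=t$, sending them into $M_{t-1}$ when $r<s=t$, so that $x_M$ acts on the highest weight space of $M_t/M_{t-1}$ exactly as $x_t=f_{t,t}\otimes d_t - f'_{t,t}\otimes d_t'$. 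Also your $2\times2$ "matrix with entries in $\End_\h(\u(\bb))$" is a slight abuse (it is a matrix over the Clifford algebra, not over the endomorphism ring of the irreducible), though the intent is clear and the squaring computation goes through.
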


\begin{proof}
(1)
The filtration is constructed in \cite[Lemma 4.3.7]{B1}, as follows.
By  the tensor identity
$$
U \otimes M = U \otimes (U(\g) \otimes_{U(\b)} \u(\bb))
			\cong U(\g) \otimes_{U(\b)} (U \otimes \u(\bb)).
$$
As a $\b$-supermodule, $U$ has a filtration 
$0 = U_0 \subset U_1 \subset \cdots\subset U_n = U$
in which the section $U_t / U_{t-1}$ is spanned by the images of
$u_t$ and $u_t'$.  
Let  $M_t$ be the submodule of $U \otimes M$ 
that maps to $U(\g) \otimes_{U(\b)} (U_t \otimes \u(\bb))$
under this isomorphism.

Now fix $t \in \{1,\dots,n\}$.
Let $v_1,\dots,v_k$ be a basis for the even highest weight space
$M(\bb)_{\lambda_\bb,\0}$, so that
$d_t' v_1,\dots,d_t' v_k$ is a 
basis for
$M(\bb)_{\lambda_\bb,\1}$.
The subquotient 
$M_t / M_{t-1} \cong U(\g) \otimes_{U(\b)} (U_t / U_{t-1} \otimes \u(\bb))$
is generated by 
the images of 
the vectors $\{u_t\otimes v_i, u_t' \otimes v_i,
u_t \otimes d_t'v_i, u_t' \otimes d_t'v_i\:|\:
i=1,\dots,k\}$, which by weight considerations
span 
a $\b$-supermodule isomorphic to
$\u(\bb + \sigma_t\bd_t) \oplus \Pi \u(\bb + \sigma_t\bd_t)$.
Hence, $$
M_t / M_{t-1} \cong M(\bb+\sigma_t \bd_t) \oplus \Pi M(\bb+\sigma_t \bd_t).
$$
The action of $f_{r,s} \otimes e_{s,r} - f_{r,s}' \otimes e_{s,r}'$ on
any of 
$u_t \otimes v_i, u_t' \otimes v_i, u_t \otimes d_t'v_i$ or
$u_t'\otimes d_t'v_i$
is zero unless $r \leq s =t$,
and if $r < s = t$ then it sends these vectors into
$M_{t-1}$. Therefore, $x_{M}$ preserves
the filtration. Moreover, this argument shows that it acts on the
highest weight space of the quotient $M_t / M_{t-1}$ in the same way as
$x_t := f_{t,t}\otimes d_t - f_{t,t}'\otimes d'_t$.  

Now consider the purely even subspace $S_{i,t}$ of $M_t / M_{t-1}$
with basis given by the images of $u_t \otimes v_i, u_t' \otimes d_t'v_i$.
Recalling that $d_t$ acts on $v_i$ and on $d_t' v_i$ by $\lambda_{\bb,t}$, and
that $(d_t')^2 = d_t$, it is straightforward to check that the matrix of the
endomorphism $x_t$ of $S_{i,t}$ in the given basis is equal to 
$$
A := \left( \begin{array}{cc} \lambda_{\bb,t} & \lambda_{\bb,t} \\
					1 & -
                                        \lambda_{\bb,t} \end{array}
                                    \right).
$$
Also recall from our construction of $\u(\bb)$
that $d_1'\cdots d_{2m}'$ acts on $v_i$ as the scalar $c_\bb$
from (\ref{y1}), 
and it acts on
$d_t' v_i$ as $-c_{\bb}$.
Using this, another calculation shows that
$d_1'\cdots d_{2m}'$ acts on $S_{i,t}$ as the matrix
$\frac{c_\bb}{\lambda_{\bb,t}} A$.
Similarly, on the purely odd subspace $S_{i,t}'$ 
with basis given by the images of $u_t' \otimes v_i, u_t \otimes d_t'v_i$, $x_t$ has matrix $-A$
and $d_1'\cdots d_{2m}'$ has matrix $-\frac{c_\bb}{\lambda_{\bb,t}} A$.

Since the matrix $A$ has eigenvalues $\pm
\sqrt{\lambda_{\bb,t}}\sqrt{\lambda_{\bb,t}+1}$,
the calculation made in the previous paragraph implies that
$x_t$ is diagonalizable on $M_t / M_{t-1}$
with exactly these eigenvalues.
Moreover on any even highest weight vector in its
$
\sqrt{\lambda_{\bb,t}}\sqrt{\lambda_{\bb,t}+1}$-eigenspace,
we get that $d_1'\cdots d_{2m}'$ acts as
\begin{align*}
		\frac{c_{\bb}}{\lambda_{\bb,t}} \sqrt{\lambda_{\bb,t}}\sqrt{\lambda_{\bb,t} + 1} 
		&= c_{\bb+\sigma_t \bd_t}.
\end{align*}
This implies that the
$\sqrt{\lambda_{\bb,t}}\sqrt{\lambda_{\bb,t}+1}$-eigenspace is
evenly isomorphic to $M(\bb+\sigma_t \bd_t)$.
Similarly,
the
$-\sqrt{\lambda_{\bb,t}}\sqrt{\lambda_{\bb,t}+1}$-eigenspace is
evenly isomorphic to $\Pi M(\bb+\sigma_t \bd_t)$.

(2) Similar.
\end{proof}

\begin{corollary}\label{minpoly}
For $M \in \ob\sO$, 
all roots of the minimal polynomials of
$x_M$ and $x_M^*$ (computed
in the finite dimensional superalgebras
$\End_{\sO}(\sF\,M)$ and $\End_{\sO}(\sE\,M)$)
belong to the set $J$ from (\ref{jdef}).
\end{corollary}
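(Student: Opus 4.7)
The plan is to bootstrap from Lemma \ref{eigenvalues} by a three-stage argument: verify the corollary for Verma supermodules, extend it along short exact sequences to all of $\sO$, and then pin down the reduction to irreducibles.

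First I would check that the eigenvalues appearing in Lemma \ref{eigenvalues} actually lie in $J$. The eigenvalues on the subquotients of $\sF\,M(\bb)$ are $\pm\sqrt{\lambda_{\bb,t}}\sqrt{\lambda_{\bb,t}+1}$. If $\sigma_t=+$, then $\lambda_{\bb,t}=z+b_t$ and these are $\pm\sqrt{z+b_t}\sqrt{z+b_t+1}\in J$ (take $i=b_t$). If $\sigma_t=-$, then $\lambda_{\bb,t}=-(z+b_t)$ and $\lambda_{\bb,t}+1=-(z+b_t-1)$, so the values are $\pm\sqrt{-(z+b_t)}\sqrt{-(z+b_t-1)}$, which by the key identity (\ref{violin}) with $i=b_t-1$ equals $\pm\sqrt{z+b_t-1}\sqrt{z+b_t}\in J$. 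The analogous verification for $\sE\,M(\bb)$ with eigenvalues $\pm\sqrt{\lambda_{\bb,t}}\sqrt{\lambda_{\bb,t}-1}$ is parallel, with the case $\sigma_t=-$ again requiring (\ref{violin}).

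Now for any $\bb\in\B$, Lemma \ref{eigenvalues}(1) exhibits a finite filtration of $\sF\,M(\bb)$ preserved by $x_{M(\bb)}$ whose induced endomorphism on each quotient is annihilated by $x^2-\lambda_{\bb,t}(\lambda_{\bb,t}+1)$; hence $x_{M(\bb)}$ itself is annihilated by the polynomial $\prod_{t=1}^n(x^2-\lambda_{\bb,t}(\lambda_{\bb,t}+1))$, all of whose roots lie in $J$ by the previous paragraph. Similarly for $x^*_{M(\bb)}$ using part (2). Thus the corollary holds for $M=M(\bb)$.

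Next I would reduce the general case to this one. The functors $\sF$ and $\sE$ are exact, being tensor products with the finite-dimensional $\g$-supermodules $U$ and $U^*$. Moreover, every $M\in\ob\sO$ has finite length (standard weight and central-character arguments using that $M$ is finitely generated and locally $\b$-finite). If $0\to N\to M\to M'\to 0$ is short exact and $p(x_N)=0$, $q(x_{M'})=0$ for polynomials $p,q$ with roots in $J$, then applying $\sF$ gives a short exact sequence on which $x$ acts compatibly, and $p(x_M)q(x_M)=0$; the same holds for $\sE$. By induction on length it therefore suffices to treat irreducibles. But every irreducible in $\sO$ is isomorphic to a quotient of some $M(\bb)$ (possibly after a parity flip, for which $x$ is unchanged), so $\sF\,L(\bb)$ is a quotient of $\sF\,M(\bb)$; the induced endomorphism of $\sF\,L(\bb)$ is annihilated by the same polynomial as $x_{M(\bb)}$, whose roots lie in $J$. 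The same applies to $x^*_{L(\bb)}$, completing the proof.

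There is no serious obstacle here beyond the bookkeeping in the first paragraph, where the sign identity (\ref{violin}) is precisely what is needed to uniformly place the two a priori different expressions $\sqrt{\lambda_{\bb,t}}\sqrt{\lambda_{\bb,t}\pm 1}$ into the single set $J$.
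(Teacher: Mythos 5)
Your proof is correct and follows the same three-step reduction the paper uses (Vermas via Lemma~\ref{eigenvalues}, then irreducibles as quotients of Vermas, then general $M$ by induction on length using exactness of $\sF, \sE$). You have merely filled in the details the paper elides, most usefully the explicit check via (\ref{violin}) that $\pm\sqrt{\lambda_{\bb,t}}\sqrt{\lambda_{\bb,t}\pm 1}$ lands in $J$ also when $\sigma_t=-$.
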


\begin{proof}
This is immediate from the theorem in case $M$ is a Verma supermodule.
We may then deduce that it is true for all irreducibles, hence, for any $M
\in \ob \sO$.
\end{proof}

Corollary~\ref{minpoly} implies that we can decompose
\begin{equation}\label{cc}
\sF = \bigoplus_{j \in J} \sF_j,
\qquad
\sE = \bigoplus_{j \in J} \sE_j,
\end{equation}
where 
$\sF_j$ (resp. $\sE_j$) is the subfunctor of $\sF$ (resp. $\sE$) defined
by letting $\sF_j\, M$
(resp. $\sE_j\, M$) be the generalized $j$-eigenspace of
$x_M$ (resp. $x_M^*$)
for each $M \in \ob\sO$.  
Recall that $I$ denotes $\Z$.
For $i \in I$, we
define the {\em $i$-signature} of $\bb \in \B$
to be the $n$-tuple
$\isig(\bb) = (\isig_1(\bb),\dots,\isig_n(\bb)) \in\{\mathtt{e}, \mathtt{f}, \bullet\}^n$ with
\begin{equation}\label{sigdef}
\isig_t(\bb) := 
\left\{
\begin{array}{ll}
\mathtt{f}&\text{if either }
\sigma_t = +\text{ and }b_t = i,
\text{ or }
\sigma_t = -\text{ and }b_t = i+1,\\
\mathtt{e}&\text{if either }
\sigma_t = + \text{ and }b_t = i+1,
\text{ or }
\sigma_t = -\text{ and }b_t = i,\\
\bullet&\text{otherwise.}
\end{array}
\right.
\end{equation}

%We remark that the biadjoint functors
%$\sF$ and $\sE$ send projectives to projectives, and hence their
%direct summands $\sF_j$ and $\sE_j$ must, as well.  

\begin{theorem}\label{maintfthm}
Given $\bb \in \B$ and $i \in I$,
let $j := \sqrt{z+i}\sqrt{z+i+1}$.
Then:
\begin{itemize}
\item[(1)]
$\sF_j \,M(\bb)$ has a multiplicity-free 
filtration with sections that are 
evenly isomorphic to the Verma supermodules
$$
\{M(\bb+\sigma_t \bd_t)\:|\:\text{for }1 \leq t \leq n\text{ such that }\isig_t(\bb) =
\mathtt{f}\},
$$
appearing from bottom to top in order of increasing index $t$.
\item[(2)]
$\sE_j \,M(\bb)$ has a multiplicity-free 
filtration with sections that are 
evenly isomorphic to the Verma supermodules
$$
\{M(\bb-\sigma_t \bd_t)\:|\:\text{for }1 \leq t \leq n\text{ such that
}\isig_t(\bb) = \mathtt{e}\},
$$
appearing from top to bottom in order of increasing index $t$.
\end{itemize}
\end{theorem}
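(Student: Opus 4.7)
The plan is to derive both parts of the theorem directly from Lemma~\ref{eigenvalues} by applying the generalized $j$-eigenspace functor of $x_{M(\bb)}$ (resp.\ $x^*_{M(\bb)}$) to the filtration of $U\otimes M(\bb)$ (resp.\ $U^*\otimes M(\bb)$) provided there. Since $x_{M(\bb)}$ preserves that filtration and restricts to a diagonalizable endomorphism of each weight space of each subquotient, the generalized eigenspace construction is exact and produces an induced filtration of $\sF_j\, M(\bb)$ whose $t$-th section equals the $j$-eigenspace of $x_{M(\bb)}$ on $M_t/M_{t-1}$.

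By Lemma~\ref{eigenvalues}(1), this induced section is either $M(\bb+\sigma_t\bd_t)$, $\Pi M(\bb+\sigma_t\bd_t)$, or zero, depending on whether $j$ equals $+\sqrt{\lambda_{\bb,t}}\sqrt{\lambda_{\bb,t}+1}$, its negative, or neither. The core computation is to show that the first alternative occurs precisely when $\isig_t(\bb)=\mathtt{f}$. Squaring reduces the eigenvalue equation to $\lambda_{\bb,t}(\lambda_{\bb,t}+1)=(z+i)(z+i+1)$, whose only solutions are $\lambda_{\bb,t}\in\{z+i,\,-(z+i+1)\}$. Combining this with $\lambda_{\bb,t}=\sigma_t(z+b_t)$ and invoking $2z\notin\Z$ rules out the ``mixed sign'' possibilities and leaves exactly $(\sigma_t,b_t)\in\{(+,i),(-,i+1)\}$, which is the definition (\ref{sigdef}) of $\isig_t(\bb)=\mathtt{f}$. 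The $+$ vs $-$ sign of the square root is then pinned down by direct computation: in the case $(+,i)$ one has $\sqrt{\lambda_{\bb,t}}\sqrt{\lambda_{\bb,t}+1}=\sqrt{z+i}\sqrt{z+i+1}=j$ by definition, and in the case $(-,i+1)$ the normalization (\ref{violin}) gives $\sqrt{-(z+i+1)}\sqrt{-(z+i)}=\sqrt{z+i+1}\sqrt{z+i}=j$ as well. Hence in both cases the $+j$-eigenspace of $M_t/M_{t-1}$ produces a copy of $M(\bb+\sigma_t\bd_t)$, never its parity flip.

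Collapsing the zero sections of the induced filtration yields exactly the filtration claimed in part~(1), with surviving sections listed from bottom to top in order of increasing $t$; multiplicity-freeness is automatic because $t\mapsto \bb+\sigma_t\bd_t$ is injective. Part~(2) follows in the same way from Lemma~\ref{eigenvalues}(2): the squared eigenvalue equation becomes $\lambda_{\bb,t}(\lambda_{\bb,t}-1)=(z+i)(z+i+1)$, now forcing $\lambda_{\bb,t}\in\{z+i+1,\,-(z+i)\}$ and singling out the $\isig_t(\bb)=\mathtt{e}$ cases, while the ``top to bottom in increasing $t$'' ordering simply reflects the reversed indexing $M^n\subset\cdots\subset M^0$ in that lemma. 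The main technical point, which is precisely what the normalization (\ref{violin}) was engineered to handle, is the sign check ensuring that the two cases $\sigma_t=+$ and $\sigma_t=-$ both contribute to the same eigenvalue $+j$ rather than one producing $+j$ and the other $-j$.
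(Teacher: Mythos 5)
Your proof is correct and follows essentially the same route as the paper: both arguments extract the $j$-generalized eigenspace from the filtration of Lemma~\ref{eigenvalues}, square the eigenvalue equation (the paper writes it as $(\lambda_{\bb,t}+\tfrac12)^2=(z+i+\tfrac12)^2$, which is the same as your $\lambda_{\bb,t}(\lambda_{\bb,t}+1)=(z+i)(z+i+1)$), use $2z\notin\Z$ to exclude the mixed-sign solutions, and invoke (\ref{violin}) to pin down the sign of the square root so that both $\sigma_t=+$ and $\sigma_t=-$ land in the $+j$-eigenspace. You simply make explicit a few steps the paper compresses, including the exactness of the generalized-eigenspace construction and the details for part~(2), which the paper dismisses as ``similar.''
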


\begin{proof}
(1)
It is immediate from Lemma~\ref{eigenvalues} that
$\sF_j \,M(\bb)$ has a multiplicity-free filtration with sections that
are evenly isomorphic to the supermodules
$M(\bb+\sigma_t \bd_t)$
for $t=1,\dots,n$ such that
$\sqrt{\lambda_{\bb,t}}\sqrt{\lambda_{\bb,t}+1} = j$.
Squaring both sides, this equation implies that
$\left(\lambda_{\bb,t}+\half\right)^2 = \left(z+i+\half\right)^2$.
Hence,
$$
\lambda_{\bb,t} = \sigma_t(z+b_t) = 
-\half \pm \left(z+i+\half\right).
$$
We deduce either that $\sigma_t = +$ and $b_t = i$,
or $\sigma_t = -$ and $b_t = i+1$.
Since we squared our original equation,
it remains to check that we do indeed get solutions to
that in both cases. This is clear in the case that $\sigma_t =
+$,
and it follows in the case that $\sigma_t = -$ using also (\ref{violin}).

(2) Similar.
\end{proof}

\begin{remark}
Using Theorem~\ref{hkst} below, one can show that
there are
odd supernatural isomorphisms
$c:\sF_{j}\stackrel{\sim}{\Rightarrow} \sF_{-j}$
and
$c^*:\sE_{j}\stackrel{\sim}{\Rightarrow} \sE_{-j}$
for each $j \in J$.
One consequence (which could be checked directly right away)
is that there is another version of Theorem~\ref{maintfthm}, in which
one takes
 $j := -\sqrt{z+i}\sqrt{z+i+1}$
and replaces the Verma supermodules $M(\bb \pm \sigma_t \bd_t)$ in the statement by
their parity flips.
\end{remark}

The superfunctors $\sF$ and $\sE$ are both left and right adjoint to
each other via some canonical (even) adjunctions.
The adjunction making $(\sE, \sF)$ into an adjoint pair 
is induced by the linear maps
$$
\eps:U^* \otimes U \rightarrow \K,\:
\phi \otimes u \mapsto \phi(u),
\qquad
\eta:\K \rightarrow U \otimes U^*,\:
1 \mapsto \sum_{r=1}^n (u_r \otimes \phi_r + u_r' \otimes \phi_r').
$$
Thus, the unit of adunction $c:1 \Rightarrow \sF \,\sE$
is defined on 
supermodule $M$
by the map
$c_M:M \stackrel{\operatorname{can}}{\longrightarrow} 
\K \otimes M \stackrel{\eta \otimes \id}{\longrightarrow} U \otimes
U^* \otimes M$,
and the counit of adjunction $d:\sE\, \sF \Rightarrow 1$
is defined by $d_M:U^* \otimes U \otimes M
\stackrel{\eps\otimes\id}{\longrightarrow} \K\otimes M
\stackrel{\operatorname{can}}{\longrightarrow} M$.
Similarly, the adjunction making $(\sF, \sE)$ into an adjoint pair
is induced by the linear maps
$$
U \otimes U^* \rightarrow \K,\,
u \otimes \phi \mapsto (-1)^{|\phi||u|}\phi(u),
\quad
\K \rightarrow U^* \otimes U,\,
1 \mapsto \sum_{r=1}^n (\phi_r \otimes u_r - \phi_r' \otimes u_r').
$$
The following lemma implies that these adjunctions restrict to 
adjunctions making $(\sF_j, \sE_j)$ and $(\sE_j, \sF_j)$
into adjoint pairs for each $j \in J$.
It follows that all of these superfunctors send projectives to
projectives,
and they are all exact, i.e. they
preserve short exact sequences
in $\underline{\sO}$.

\begin{lemma}
The supernatural transformation $x^*:\sE \Rightarrow \sE$
is both the
left and right mate of $x:\sF \Rightarrow \sF$
with respect to the canonical adjunctions defined above.
\end{lemma}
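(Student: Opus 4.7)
The plan is to unpack the definition of the right mate of $x$ under the adjunction $(\sE,\sF)$, verify directly that it coincides with $x^*$, and then observe that the parallel computation for the left mate (using the other adjunction, $(\sF,\sE)$) proceeds identically up to the sign twists appearing in its unit and counit. By this symmetry it suffices to treat the right mate: the supernatural transformation with components $\tilde{x}_M := d_{\sE M}\circ\sE(x_{\sE M})\circ\sE(c_M): U^*\otimes M \to U^*\otimes M$, which must be shown to agree with $x^*_M$ on every $M\in\ob\sO$.

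To carry this out, first expand $\sE(c_M)(\phi\otimes v) = \phi\otimes\sum_r(u_r\otimes\phi_r + u'_r\otimes\phi'_r)\otimes v$. Then apply $\sE(x_{\sE M})$, which acts by $\omega$-multiplication on the rightmost three factors regarded as the object $\sF\sE M = U\otimes U^*\otimes M$; crucially, the second tensor factor of $\omega$, living in $U(\g)$, acts on $U^*\otimes M$ via the coproduct, producing both ``$e_{s,r}$-on-$U^*$'' terms and ``$e_{s,r}$-on-$M$'' terms (and analogously for $e'_{s,r}$, with an extra super-sign from acting past the odd vector $\phi'_r$). Finally $d_{\sE M}$ contracts the first two factors through the evaluation pairing. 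The resulting expression splits into two classes of contributions. The terms in which $e_{s,r}$ or $e'_{s,r}$ has acted on $M$ collapse, via the dual-basis identity $\psi = \sum_r\psi(u_r)\phi_r + \sum_r\psi(u'_r)\phi'_r$ together with the action formulas (\ref{f3})--(\ref{f4}) for $\widehat\g$ on $U^*$, to the action of the ``super-transposed first factor'' of $\omega$ on $U^*\otimes M$. Since each of $f_{r,s}$ and $f'_{r,s}$ is primitive in $U(\widehat\g)$, this super-transpose negates them, so the tensor becomes $-\omega$, reproducing $x^*_M(\phi\otimes v)$ exactly. The remaining terms, in which $e_{s,r}$ or $e'_{s,r}$ has acted on the inserted $U^*$-factor, cancel in matching pairs between the $f\otimes e$ and $f'\otimes e'$ parts of $\omega$; this cancellation ultimately reflects the $\g$-invariance of $\omega$ established in the proof of the previous lemma.

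The main obstacle will be the careful tracking of super signs: in particular the sign from $e'_{s,r}$ acting past $\phi'_r$ via the coproduct, the minus sign in the $f'\otimes e'$ summand of $\omega$, and (for the parallel argument handling the left mate) the relative sign between the $\phi_r\otimes u_r$ and $\phi'_r\otimes u'_r$ contributions in the coevaluation map for the $(\sF,\sE)$-adjunction. It is precisely the interplay of these signs that conspires to produce the $-\omega$ appearing in the definition of $x^*_M$.
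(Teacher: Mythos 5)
Your proposal takes essentially the same approach as the paper: reduce to one of the two mate checks, write out the mate formula explicitly, then verify it agrees with $x^*_M$ by direct computation on vectors of the form $\phi\otimes v$, using the coproduct expansion of the second tensor factor of $\omega$ and the action formulas \eqref{f1}--\eqref{f4}. The paper's proof is terser, merely stating ``one computes the effect of both maps on homogeneous vectors of the form $\phi_t\otimes v$ and $\phi'_t\otimes v$''; your version organizes the same calculation a bit further, separating the ``acts on $U^*$'' from the ``acts on $M$'' contributions and explaining why the former cancel and the latter reproduce $-\omega$.

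Two minor remarks. First, a terminology swap: your ``right mate under $(\sE,\sF)$'' is precisely what the paper calls the ``left mate'' (the formula $d_{\sE M}\circ\sE(x_{\sE M})\circ\sE(c_M)$ is identical, so nothing is amiss mathematically, but a reader comparing to the paper would be briefly confused). Second, the claim that the cancellation of the ``$U^*$-acted'' terms ``ultimately reflects the $\g$-invariance of $\omega$'' is offered without justification; what actually makes those terms vanish is a short bookkeeping computation using $f_{r,s}u_a = \delta_{s,a}u_r$, $e_{s,r}\phi_a = -\delta_{s,a}\phi_r$, and the corresponding primed formulas, which is not the same as (nor an obvious corollary of) $\g$-invariance. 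Since you do gesture at carrying out that sign calculation anyway, this is more a matter of an imprecise conceptual gloss than a gap.
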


\begin{proof}
We just explain how to check that $x^*$ is the left mate of $x$ with respect to the
adjunction $(\sE, \sF)$; the argument for right mate is similar.
We need to show for each $M \in \ob \sO$ that
the composition
$$
U^* \otimes M \stackrel{\operatorname{id} \otimes c_{M}}{\longrightarrow} U^* \otimes U
\otimes U^* \otimes M
\stackrel{\operatorname{id}\otimes x_{U^* \otimes M}}{\longrightarrow}
U^* \otimes U \otimes U^* \otimes M \stackrel{d_{U^* \otimes M}}{\longrightarrow} U^* \otimes M
$$
is equal to $x_M^*:U^*\otimes M \rightarrow U^* \otimes M$.
Recall for this that $x_M^*$ is defined by left multiplication by
$\sum_{r,s=1}^n \left(f_{r,s}' \otimes e_{s,r}' -
  f_{r,s}\otimes e_{s,r}\right)$,
while $x_{U^* \otimes M}$ is defined by left multiplication
by 
$\sum_{r,s=1}^n (f_{r,s} \otimes e_{s,r}\otimes 1 + f_{r,s} \otimes 1 \otimes e_{s,r}
- f_{r,s}' \otimes e_{s,r}'\otimes 1 - f_{r,s}' \otimes 1 \otimes
e_{s,r}')$.
Now one computes the effect of both maps on homogeneous vectors of the
form $\phi_t\otimes v$ and $\phi_t' \otimes v$
using (\ref{f1})--(\ref{f4}).
\end{proof}

\section{Bruhat order}\label{s4}

Consider
the Dynkin
diagram
$
{\begin{picture}(113, 15)%
\put(18,2){\circle{4}}%
\put(37,2){\circle{4}}%
\put(56,2){\circle{4}}%
\put(75, 2){\circle{4}}%
\put(94, 2){\circle{4}}%
\put(20, 2){\line(1, 0){15.5}}%
\put(39, 2){\line(1, 0){15.5}}%
\put(58, 2){\line(1, 0){15.5}}%
\put(77, 2){\line(1, 0){15.5}}%
\put(98, 2){\line(1, 0){1}}%
\put(101, 2){\line(1, 0){1}}%
\put(104, 2){\line(1, 0){1}}%
\put(107, 2){\line(1, 0){1}}%
\put(110, 2){\line(1, 0){1}}%
\put(4, 2){\line(1, 0){1}}%
\put(1, 2){\line(1, 0){1}}%
\put(7, 2){\line(1, 0){1}}%
\put(10, 2){\line(1, 0){1}}%
\put(13, 2){\line(1, 0){1}}%
\put(15, 8){\makebox(0, 0)[b]{$_{{-2}}$}}%
\put(34, 8){\makebox(0, 0)[b]{$_{{-1}}$}}%
\put(56, 8){\makebox(0, 0)[b]{$_{0}$}}%
\put(75, 8){\makebox(0, 0)[b]{$_{1}$}}%
\put(94, 8){\makebox(0, 0)[b]{$_{{2}}$}}%
\end{picture}}
$
whose vertices are indexed by the totally ordered set $I = \Z$.
We denote the associated Kac-Moody algebra 
by
$\ssl_{\infty}$. This is the Lie algebra of traceless,
finitely-supported complex matrices whose rows and columns are indexed
by $I$.
It is generated by the matrix
units $f_i :=e_{i+1, i}$ and $e_i := e_{i, i+1}$ for $i \in I$.  
The \textit{natural representation} $V^{+}$
 of $\mathfrak{sl}_{\infty}$ is the module of 
column vectors with standard basis $\{ v^{+}_i\:|\: i \in I \}$.  
We also need the {\em dual natural representation}
$V^-$ with basis $\{v_i^-\:|\:i \in I\}$.
The action of the Chevalley generators on these bases is given by
\begin{align}\label{e}
e_i v^{+}_j &= \delta_{i+1,j} v^{+}_i,
&
e_i v^{-}_j &= \delta_{i,j} v^{-}_{i+1},\\
f_i v^{+}_j &= \delta_{i,j} v^{+}_{i+1},
& f_i v^{-}_j &= \delta_{i+1,j} v^{-}_i.
\label{f}
\end{align}
The tensor product
$V^{\otimes \bsigma} := V^{\sigma_1} \otimes \cdots \otimes V^{\sigma_n}$
has monomial basis
$\{v_\bb\:|\:\bb \in \B\}$ defined from 
$v_\bb := v^{\sigma_1}_{b_1} \otimes \cdots \otimes v^{\sigma_n}_{ b_n
}$.
Recalling (\ref{sigdef}),
the Chevalley generators act on these monomials by
\begin{align}\label{haha}
f_i v_{\bb} &= \sum_{\substack{1 \leq t \leq n \\ \isig_t(\bb)
    =\mathtt{f}}} v_{\bb+\sigma_t \bd_t},
&
e_i v_{\bb} &= \sum_{\substack{1 \leq t \leq n\\ \isig_t(\bb) =
    \mathtt{e}}} v_{\bb-\sigma_t \bd_t}.
\end{align}
This should be compared with Theorem~\ref{maintfthm}, which already makes some connection between 
the endofunctors $\sF_j, \sE_j$ of
$\sO$ and the $\mathfrak{sl}_\infty$-module $V^{\otimes \bsigma}$.

We next introduce an important partial order $\succeq$ on $\B$,
which we call the {Bruhat order}
It is closely related to the {inverse dominance order} of \cite[Definition 3.2]{LW}, which
comes from Lusztig's construction of tensor products of
based modules \cite[$\S$27.3]{Lubook}.
The root system of $\mathfrak{sl}_\infty$ has 
\textit{weight lattice} $P := \bigoplus_{i \in I} \Z \omega_i$
where $\omega_i$ is the {$i$th
 fundamental weight}.   
For $i \in I$, we set
$$
\eps_i := \omega_i - \omega_{i-1},
\qquad
\alpha_i := \eps_i - \eps_{i+1}.
% = 2 \omega_i - \omega_{i-1} - \omega_{i+1}.
$$
We identify $\eps_i$ with the weight of the vector $v_i^+$ in the
$\mathfrak{sl}_\infty$-module $V^+$.
Then, $v_i^- \in V^-$ is of weight $-\eps_i$.
For $\bb \in \B$, let
\begin{equation}
\WT(\bb) = (\wt_1(\bb),\dots,\wt_n(\bb)) \in P^n
\end{equation}
be the $n$-tuple of
weights defined from
$\wt_r(\bb) := \sigma_r \eps_{b_r}$,
so that $v_\bb \in V^{\otimes\bsigma}$ is of weight
$|\WT(\bb)| := \wt_1(\bb)+\cdots+\wt_n(\bb) \in P$.
Because the weight spaces of $V^\pm$ are all one-dimensional,
the map $\B \rightarrow P^n, \bb \mapsto \WT(\bb)$ is injective.

\begin{definition}\label{bruhatdef}
Let $\trianglelefteq$ denote the {dominance order} on $P$, so
$\beta \trianglelefteq \gamma\Leftrightarrow\gamma - \beta \in \bigoplus_{i \in
  I} \N \alpha_i$.
The {\em inverse dominance order} on $P^n$
is the partial order defined by declaring that 
$(\beta_1,\dots,\beta_n) \succeq (\gamma_1,\dots,\gamma_n)$
if and only if
$$
\beta_1+\cdots+\beta_s \trianglelefteq \gamma_1+\cdots+\gamma_s,
$$
for each $s=1,\dots,n$, with the inequality being an equality when
$s=n$.
Finally, define the {\em Bruhat order} $\succeq$ on $\B$ 
by $\ba \succeq \bb \Leftrightarrow \WT(\ba) \succeq
\WT(\bb)$.
\end{definition}

Our first lemma makes the definition of the Bruhat order
more explicit.
Using it, one can check in particular that $\ba \succeq \bb$ implies that $\lambda_\ba \geq
\lambda_\bb$ in the dominance order on $\mathfrak{t}^*$; cf. \cite[Lemma 3.4]{BLW}.

\begin{lemma}\label{eleme}
For $\ba \in \B, i \in I$ and $1 \leq s \leq n$, we let
$$
N_{[1,s]}(\ba,i)
:=
\#\{1 \leq r \leq s\:|\:a_r > i, \sigma_r = +\}
-
\#\{1 \leq r \leq s\:|\:a_r > i, \sigma_r = -\}.
$$
Then 
we have that $\ba \succeq \bb$ if and only if
\begin{itemize}
\item
$N_{[1,n]}(\ba, i) = N_{[1,n]}(\bb, i)$
for all $i \in I$;
\item
$N_{[1,s]}(\ba, i) \geq N_{[1,s]}(\bb, i)$
for all $i \in I$ and $s = 1,\dots,n-1$.
\end{itemize}
\end{lemma}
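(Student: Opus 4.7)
The plan is to translate the defining condition of $\succeq$ from its abstract formulation in $P^n$ into the partial-sum count language of the $N_{[1,s]}(\cdot, i)$, via a standard reformulation of the dominance order $\trianglelefteq$ on $P$.

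First I would prove the following auxiliary observation: for any $\mu \in P$ expressed as $\mu = \sum_{i \in I} k_i \eps_i$ with $k_i \in \Z$ almost all zero and $\sum_i k_i = 0$, one has $\mu \in \bigoplus_{i} \N \alpha_i$ if and only if every partial sum $K_j := \sum_{l \leq j} k_l$ satisfies $K_j \geq 0$. This is a telescoping check: if $\mu = \sum_j m_j \alpha_j = \sum_j m_j(\eps_j - \eps_{j+1})$ then $k_i = m_i - m_{i-1}$, so $m_j = K_j$, and the condition that all $m_j$ are non-negative integers (almost all zero) is exactly that all $K_j$ are non-negative and $K_j = 0$ for $|j| \gg 0$ (the left-hand vanishing being automatic from finite support, the right-hand vanishing from the balance $\sum_i k_i = 0$).

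Next I would compute, for fixed $s$, the $K_j$ attached to the partial weight $\sum_{r=1}^{s} \wt_r(\bb) = \sum_{r=1}^{s} \sigma_r \eps_{b_r}$. The coefficient of $\eps_j$ is the net signed count of $r \leq s$ with $b_r = j$, so the partial sum $K_j$ is the net signed count of $r \leq s$ with $b_r \leq j$. Letting $c_s := \#\{r \leq s : \sigma_r = +\} - \#\{r \leq s : \sigma_r = -\}$ (a constant depending only on $\bsigma$ and $s$), splitting by $b_r \leq j$ versus $b_r > j$ gives $K_j = c_s - N_{[1,s]}(\bb, j)$, and similarly with $\ba$.

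Finally I would unravel the definition: $\ba \succeq \bb$ means that for each $s$ the element $\mu^{(s)} := \sum_{r=1}^{s} \sigma_r(\eps_{b_r} - \eps_{a_r})$ lies in $\bigoplus_i \N \alpha_i$, and $\mu^{(n)} = 0$. The hypothesis $\sum_i k_i = 0$ for $\mu^{(s)}$ is automatic because each summand $\sigma_r(\eps_{b_r} - \eps_{a_r})$ contributes $\sigma_r - \sigma_r = 0$. The partial sums of $\mu^{(s)}$ are then $K_j = \bigl(c_s - N_{[1,s]}(\bb, j)\bigr) - \bigl(c_s - N_{[1,s]}(\ba, j)\bigr) = N_{[1,s]}(\ba, j) - N_{[1,s]}(\bb, j)$. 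Applying the auxiliary observation gives the two stated inequalities for $s < n$, while $\mu^{(n)} = 0$ translates to the equality $N_{[1,n]}(\ba, j) = N_{[1,n]}(\bb, j)$ for all $j$. I do not anticipate any serious obstacle; the only care needed is to confirm that the partial sums $K_j$ vanish for $|j|$ large (immediate since each $a_r, b_r$ is a fixed integer), so that the characterization of $\bigoplus \N \alpha_i$ in terms of partial sums is legitimate on this infinite Dynkin diagram.
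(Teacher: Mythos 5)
Your proof is correct, and it is genuinely self-contained: the paper disposes of this lemma in one line by citing it as a special case of \cite[Lemma 2.17]{BLW}, so it does not present an argument at all. Your approach — characterizing the dominance order $\trianglelefteq$ on $P$ by non-negativity of partial sums of $\eps$-coefficients, then identifying those partial sums with $c_s - N_{[1,s]}(\cdot,j)$ so that the constant $c_s$ cancels in the difference — is exactly the combinatorics one would expect to underlie the cited result, and all the steps (the telescoping identity $K_j = m_j$, the splitting into $b_r \le j$ versus $b_r > j$, the $s=n$ case reducing to equality of all partial sums) are correct as stated. What your version buys is transparency and independence from the external reference; what the paper's citation buys is brevity and consistency with the general framework of \cite{BLW}, which treats more general tensor products where the combinatorics of the inverse dominance order is set up once and reused. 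One small point worth making explicit in a final write-up: in the auxiliary observation, the balance $\sum_i k_i = 0$ is automatic for any element of $\bigoplus_i \Z\alpha_i$, so it should be stated as a standing hypothesis on $\mu$ rather than something to be checked both ways — you do note it is automatic for $\mu^{(s)}$, which is all that is needed.
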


\begin{proof}
This is a special case of
\cite[Lemma 2.17]{BLW}.
\end{proof}

\begin{lemma}\label{not}
Assume that $\ba \succeq \bb$ and 
$\isig_r(\ba) = \isig_n(\bb) =
\mathtt{f}$ 
for some $i \in I$ and
$1 \leq r \leq n$.
Then $\ba + \sigma_r \bd_r \succeq \bb + \sigma_n \bd_n$,
with equality if and only if $\ba = \bb$ and $r=n$.
\end{lemma}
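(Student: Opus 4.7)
The plan is to verify $\ba' := \ba + \sigma_r \bd_r \succeq \bb + \sigma_n \bd_n =: \bb'$ via the criterion of Lemma~\ref{eleme}, and then handle the equality assertion by a short combinatorial argument.

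First I would establish a local monotonicity formula for the counters $N_{[1,s]}$ under an $\isig = \mathtt{f}$ move. Unpacking (\ref{sigdef}), if $\isig_t(\bc) = \mathtt{f}$ with index $i$, then $c_t \in \{i, i+1\}$ (with the two options correlated with $\sigma_t = +$ or $\sigma_t = -$), and $\bc + \sigma_t \bd_t$ agrees with $\bc$ except that its $t$-th entry becomes the other of these two values. A direct computation from the definition of $N_{[1,s]}(\bc, j)$ then shows, uniformly in both sign sub-cases, that $N_{[1,s]}(\bc + \sigma_t \bd_t, j) - N_{[1,s]}(\bc, j)$ equals $1$ when $j = i$ and $s \geq t$, and vanishes otherwise.

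Applying this to both $\ba \to \ba'$ (the move at position $r$) and $\bb \to \bb'$ (the move at position $n$), the two defining conditions of Lemma~\ref{eleme} for $\ba' \succeq \bb'$ drop out of those for $\ba \succeq \bb$. At $s = n$ both sides pick up the same $+1$ shift at $j = i$, so the equalities $N_{[1,n]}(\ba', j) = N_{[1,n]}(\bb', j)$ are inherited. For $s < n$, the counts $N_{[1,s]}(\bb', j)$ coincide with $N_{[1,s]}(\bb, j)$ (since the move on $\bb$ is at position $n > s$), while $N_{[1,s]}(\ba', j) \geq N_{[1,s]}(\ba, j)$ by the monotonicity formula, so the required inequalities survive and in fact strengthen at $(s, i)$ for $s \geq r$.

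For the equality case, if $r = n$ then $\sigma_r \bd_r = \sigma_n \bd_n$ and $\ba' = \bb'$ immediately forces $\ba = \bb$. Conversely, suppose $r < n$ and $\ba' = \bb'$. Then $a_k = b_k$ for all $k \neq r, n$ and $b_r = a_r + \sigma_r$. A short check in each of the four sign combinations $(\sigma_r, \sigma_n) \in \{\pm\}^2$, using $\isig_r(\ba) = \mathtt{f}$ with index $i$, shows that position $r$ contributes exactly $-1$ to $N_{[1,r]}(\ba, i) - N_{[1,r]}(\bb, i)$, while positions $k < r$ contribute $0$ since $\ba$ and $\bb$ agree there. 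This yields $N_{[1,r]}(\ba, i) = N_{[1,r]}(\bb, i) - 1$, contradicting the inequality $N_{[1,r]}(\ba, i) \geq N_{[1,r]}(\bb, i)$ forced by $\ba \succeq \bb$ (note $r \leq n - 1$, so the strict portion of Lemma~\ref{eleme} applies). The main obstacle is stating the local monotonicity formula uniformly across the $\sigma_t = \pm$ sub-cases of $\isig = \mathtt{f}$; once this is recorded cleanly, both the inequality and the equality analysis reduce to bookkeeping.
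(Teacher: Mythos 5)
Your proof is correct and follows essentially the same route as the paper: both verify the criterion of Lemma~\ref{eleme} by tracking how the counters $N_{[1,s]}(\cdot,j)$ change under the two $\mathtt{f}$-moves, with the change concentrated at $j=i$ and $s$ past the move's position. The only cosmetic differences are that you package the case analysis into a ``local monotonicity formula'' before applying it, and in the equality case you argue by contradiction at $(s,j)=(r,i)$ whereas the paper simply reads off the strict inequality $N_{[1,s]}(\ba',i)>N_{[1,s]}(\bb',i)$ for $r\le s<n$ directly from its chain of (in)equalities; both observations establish the same thing.
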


\begin{proof}
We use the conditions from Lemma~\ref{eleme}.
For {\em either }
$j \neq i$ and $1 \leq s \leq n$,
{\em  or} $j = i$ and $1 \leq s < r$,
we have that 
$$
N_{[1,s]}(\ba+\sigma_r \bd_r,j)
=
N_{[1,s]}(\ba,j)
\geq
N_{[1,s]}(\bb,j)
=
N_{[1,s]}(\bb+\sigma_n \bd_n,j).
$$
For $r \leq s < n$, we have that
$$
N_{[1,s]}(\ba+\sigma_r \bd_r, i)
=
N_{[1,s]}(\ba,i)+1
\geq
N_{[1,s]}(\bb,i)+1
>
N_{[1,s]}(\bb,i)
=
N_{[1,s]}(\bb+\sigma_n \bd_n,i).
$$
Finally,
$
N_{[1,n]}(\ba+\sigma_r \bd_r,i)
=
N_{[1,n]}(\ba,i)+1
=
N_{[1,n]}(\bb,i)+1
=
N_{[1,n]}(\bb+\sigma_n \bd_n,i).
$
\end{proof}

To prepare for the next lemma, suppose that we are given $\bb \in \B$.
Define $\ba \in \B$ by setting $a_1 := b_1$, then inductively defining 
$a_s$ 
for $s=2,\dots,n$ as follows.
\begin{itemize}
\item
If $\sigma_s = +$ then 
$a_s$ is the greatest integer such that $a_s \leq b_s$, and the
following hold for all $1 \leq r < s$:
\begin{itemize}
\item[$\circ$]
if $\sigma_r=+$ then
$a_s < a_r$;
% and $a_r \neq b_r$;
\item[$\circ$]
if $\sigma_r=-$
then $a_s < b_r$.
\iffalse
\item[$\circ$]
$a_s \leq a_r$ if $\sigma_r=+$ and $a_r = b_r$;
\fi
\end{itemize}
\iffalse
$a_s$ is the smallest of the following:
$b_s$; $b_r-1$ for all $1 \leq r < s$ with $\sigma_r = -$;
$a_r-1$ for all $1 \leq r < s$ with $\sigma_r = +$ and $a_r \neq b_r$;
$a_r$ for all $1 \leq r < s$ with $\sigma_r = +$ and $a_r = b_r$.
\fi
\item
If $\sigma_s = -$ then 
$a_s$ is the smallest integer such that $a_s \geq b_s$, and the
following hold for all $1 \leq r < s$:
\begin{itemize}
%\item[$\circ$]
%$a_s \geq a_r$ if $\sigma_r=-$ and $a_r = b_r$;
\item[$\circ$]
if $\sigma_r=-$ then $a_s > a_r$;% and $a_r \neq b_r$.
\item[$\circ$]
if $\sigma_r=+$ then $a_s > b_r$.
\end{itemize}
\iffalse
 If $\sigma_s = -$ then 
$a_s$ is the greatest of the following:
$b_s$; $b_r+1$ for all $1 \leq r < s$ with $\sigma_r = +$;
$a_r+1$ for all $1 \leq r < s$ with $\sigma_r = -$ and $a_r \neq b_r$;
$a_r$ for all $1 \leq r < s$ with $\sigma_r = -$ and $a_r = b_r$.
\fi
\end{itemize}
Also define a
monomial
$X = X_n \cdots X_2$
in the Chevalley generators $\{f_i\:|\:i \in I\}$ by setting
$$
X_r := \left\{
\begin{array}{ll}
f_{b_r-1} \cdots f_{a_r+1} f_{a_r}&\text{if $\sigma_r = +$,}\\
f_{b_r} \cdots f_{a_r-2} f_{a_r-1}&\text{if $\sigma_r = -$,}
\end{array}\right.
$$
for each $r=2,\dots,n$.

\begin{example}
If $\bsigma = (+,+,-,+,-,-)$ and $\bb = (3,4,3,4,3,4)$,
then $\ba = (3,2,5,1,6,7)$ and $X = (f_4 f_5 f_6) (f_3 f_4 f_5) (f_3 f_2
f_1) (f_3 f_4) (f_3 f_2)$.
\end{example}

\begin{lemma}\label{construction}
In the above notation, we have that 
$X v_\ba = v_\bb + 
(\text{a sum of $v_\bc$'s
for $\bc \succ \bb$})$.
\end{lemma}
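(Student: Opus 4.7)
The plan is to proceed by induction on $n$. When $n = 1$ the monomial $X$ is empty and $\ba = \bb$, so the claim is trivial. For the inductive step, set $\bb' := (b_1, \ldots, b_{n-1})$, $\ba' := (a_1, \ldots, a_{n-1})$, $\bsigma' := (\sigma_1, \ldots, \sigma_{n-1})$, and $X' := X_{n-1} \cdots X_2$. Since the recursion defining $a_s$ for $s \leq n-1$ depends only on the entries of $\bb'$ and $\bsigma'$ and on the previously constructed $a_1, \ldots, a_{s-1}$, the pair $(\ba', X')$ is exactly the output of the same construction applied to $\bb'$ inside $V^{\otimes \bsigma'}$, so the inductive hypothesis is available there.

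The first step is to show that $X'$ does not act on the last tensor factor of $v_\ba = v_{\ba'} \otimes v^{\sigma_n}_{a_n}$, yielding $X' v_\ba = (X' v_{\ba'}) \otimes v^{\sigma_n}_{a_n}$. The Chevalley indices appearing in $X_r$ for $r < n$ lie in $[a_r, b_r - 1]$ if $\sigma_r = +$ and in $[b_r, a_r - 1]$ if $\sigma_r = -$. When $\sigma_n = +$, the defining inequalities for $a_n$ give $a_n < a_r$ for $\sigma_r = +$ and $a_n < b_r$ for $\sigma_r = -$, so every such index is strictly greater than $a_n$; hence no $f_i$ in $X'$ satisfies $i = a_n$, and by (\ref{f}) none acts on $v^+_{a_n}$. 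The case $\sigma_n = -$ is symmetric. Invoking the inductive hypothesis and noting (via the criterion of Lemma~\ref{eleme}) that appending the common entry $a_n$ in the last slot preserves strict $\succ$-relations, we obtain
\begin{equation*}
X' v_\ba \;=\; v_\bd + \sum_{\bc \succ \bd} m_\bc\, v_\bc, \qquad \bd := (b_1, \ldots, b_{n-1}, a_n).
\end{equation*}

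Finally we apply $X_n = f_{j_l} \cdots f_{j_1}$ one letter at a time, with $f_{j_1}$ applied first. The definition of $X_n$ is arranged so that at each intermediate stage the ``main-branch'' state (in which every previously applied letter has acted on position $n$) has $n$-th coordinate equal to the value that makes $\isig_n = \mathtt{f}$ for the next letter. Lemma~\ref{not} then applies to each single letter: for any term $v_\bi$ in the current expansion with $\bi$ dominating the current main-branch state, applying the next letter gives $\sum_r v_{\bi + \sigma_r \bd_r}$ over positions $r$ with $\isig_r(\bi) = \mathtt{f}$, and every such $\bi + \sigma_r \bd_r$ dominates the next main-branch state, with equality only when $\bi$ itself is the main-branch state and $r = n$. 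Iterating through all $l$ letters of $X_n$ yields $v_\bb$ with coefficient exactly $1$ (arising only from the main-branch path starting at $v_\bd$), while every other term in the expansion is strictly above $\bb$ in the Bruhat order. The substantive obstacle is the first step: once one verifies that the index ranges in $X'$ are disjoint from $\{a_n\}$ (respectively from $\{a_n - 1\}$ when $\sigma_n = -$), which is exactly what the intricate recursive definition of $\ba$ guarantees, Lemma~\ref{not} is tailored to drive the remainder of the argument mechanically.
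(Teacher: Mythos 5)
Your proposal is correct and follows essentially the same approach as the paper's proof: induction on $n$, verifying that the Chevalley generators in $\bar X := X_{n-1}\cdots X_2$ never touch the last tensor factor of $v_\ba$, applying the inductive hypothesis to obtain $v_{\tilde\bb}$ for $\tilde\bb = (b_1,\dots,b_{n-1},a_n)$ plus higher terms, and then applying $X_n$ letter-by-letter with Lemma~\ref{not}. The only difference is presentational: you spell out the ``main-branch'' bookkeeping for the action of $X_n$ in more detail than the paper, which compresses this to a single sentence citing Lemma~\ref{not}.
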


\begin{proof}
We proceed by induction on $n$, the result being trivial in case $n=1$.
For $n > 1$, let 
$\bar\bsigma :=
(\sigma_1,\dots,\sigma_{n-1})$,
$\bar\ba := (a_1,\dots,a_{n-1})$, $\bar\bb
:= (b_1,\dots,b_{n-1})$
and 
$\bar X := X_{n-1} \cdots X_2$.
Applying the induction hypothesis in the $\mathfrak{sl}_\infty$-module
$V^{\otimes \bar\bsigma}$, we get that
$$
\bar X v_{\bar\ba} = v_{\bar\bb} + 
(\text{a sum of $v_{\bar\bc}$'s
for $\bar\bc \succ \bar\bb$}).
$$
Now we observe that if $f_i$ is a Chevalley generator appearing in one of the monomials $X_r$ for
$r < n$,
then $f_i v^{\sigma_n}_{a_n} = 0$.
This follows from the definitions: 
if $\sigma_n = +$ we must show that
$i \neq a_n$, which follows as $i \geq a_r > a_n$ if $\sigma_r = +$
or $i \geq b_r > a_n$ if $\sigma_r=-$;
if $\sigma_n = -$ we must show that
$i \neq a_n-1$, which follows as $i < b_r < a_n$ if $\sigma_r = +$
or $i < a_r < a_n$ if $\sigma_r=-$.
Hence,
letting
 $\tilde \bb := (b_1,\dots,b_{n-1}, a_n)$, we deduce that
$$
\bar X v_{\ba} = v_{\tilde\bb} + 
(\text{a sum of $v_{\bc}$'s
for $\bc \succ \tilde\bb$}).
$$
Finally we act with $X_n$, which sends $v^{\sigma_n}_{a_n}$ to $v^{\sigma_n}_{b_n}$, and
apply Lemma~\ref{not}.
\end{proof}

\begin{theorem}\label{mainsplitthm} 
For every $\bb \in \B$, the indecomposable projective supermodule
$P(\bb)$ 
has a Verma flag with top section evenly isomorphic to $M(\bb)$
and other
sections 
evenly isomorphic to $M(\bc)$'s for $\bc \in \B$ with $\bc \succ \bb$.
\end{theorem}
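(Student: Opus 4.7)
The plan is to realize $P(\bb)$ as a direct summand of a projective supermodule built by applying a sequence of translation superfunctors $\sF_j$ to a projective Verma. The starting point is the element $\ba \in \B$ constructed before Lemma~\ref{construction}; the first step is to verify that $\lambda_\ba$ is dominant and typical in the sense of Lemma~\ref{typicaldominant}. Dominance is immediate from the inequality $a_s < a_r$ forced whenever $\sigma_r = \sigma_s = +$ (and $a_s > a_r$ when both signs are $-$), while typicality follows from the comparisons $a_s < b_r \leq a_r$ or $a_s > b_r \geq a_r$ when $\sigma_r \neq \sigma_s$, which give $a_r \neq a_s$ and hence $\lambda_{\ba,r} + \lambda_{\ba,s} \neq 0$. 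Lemma~\ref{typicaldominant} then shows $M(\ba) = P(\ba)$ is projective in $\underline{\sO}$.

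Next, interpret each Chevalley generator $f_i$ appearing in the word $X = X_n\cdots X_2$ as the endofunctor $\sF_{j_i}$ with $j_i := \sqrt{z+i}\sqrt{z+i+1}$, and set $P := X \cdot M(\ba)$. Because each $\sF_j$ is biadjoint to $\sE_j$, it is exact and sends projectives to projectives, so $P$ is projective. Iterating Theorem~\ref{maintfthm}(1) (using exactness of $\sF_j$ to refine filtrations at each step) produces a Verma flag on $P$ in which every section is evenly isomorphic to an unflipped Verma supermodule $M(\bc)$. That theorem also identifies the operator $[\sF_{j_i}]$ on the sublattice of $[\underline{\sO}^\Delta]$ spanned by $\{[M(\bc)]\:|\:\bc \in \B\}$ with the Chevalley generator $f_i$ acting on the monomial basis $\{v_\bc\}$ of $V^{\otimes\bsigma}$. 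Combining this with Lemma~\ref{construction} yields
\[
[P] = [M(\bb)] + \sum_{\bc \succ \bb} n_\bc\,[M(\bc)] \quad\text{with } n_\bc \in \N.
\]

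Finally, I would extract $P(\bb)$ as a summand of $P$ and verify the required Verma flag structure. Since $L(\bb)$ is a composition factor of $M(\bb)$, and hence of $P$, the projective $P$ admits an epimorphism onto $L(\bb)$, so $P(\bb)$ is a direct summand of $P$. By Lemma~\ref{hom}, this summand inherits a Verma flag whose sections form a submultiset of those of $P$ and are therefore all of the form $M(\bc)$ with $\bc \succeq \bb$. The identity $\dim \Hom_{\underline{\sO}}(P(\bb), X) = [X:L(\bb)]$ (valid since $P(\bb)$ is the projective cover of the type-$\mathtt{M}$ irreducible $L(\bb)$), applied to $X = M(\bb)^\star$ together with Lemma~\ref{hom} and Lemma~\ref{star}, gives $(P(\bb):M(\bb)) = [M(\bb)^\star:L(\bb)] = 1$, so $M(\bb)$ appears exactly once in the flag of $P(\bb)$ and all other sections satisfy $\bc \succ \bb$. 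A standard rearrangement based on the vanishing of $\Ext^1_{\underline{\sO}}(M(\bc), M(\bb))$ for $\lambda_\bc > \lambda_\bb$ (immediate: the highest weight vector of any extension $0 \to M(\bb) \to E \to M(\bc) \to 0$ lies outside the subobject $M(\bb)$ and so splits the sequence via the universal property of $M(\bc)$) then allows us to move $M(\bb)$ to the top of the flag. The main obstacle I anticipate is the bookkeeping in the iteration of Theorem~\ref{maintfthm}(1): each individual step produces only unflipped Verma sections, but some care is required to confirm that the resulting filtrations splice coherently into a single genuine Verma flag of $P$ matching the Grothendieck-group calculation.
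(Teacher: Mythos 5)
Your strategy is the same as the paper's, and most of the steps are carried out correctly: realizing $M(\ba)=P(\ba)$ as a projective Verma via Lemma~\ref{typicaldominant}, applying the translation functors dictated by $X$, computing $[P]$ via Lemma~\ref{construction} and Theorem~\ref{maintfthm}, extracting $P(\bb)$ as a summand, and finishing with Lemma~\ref{hom}. The BGG-reciprocity computation of $(P(\bb):M(\bb))=1$ and the $\Ext^1$ rearrangement are fine and spell out what the paper compresses into ``it just remains to apply Lemma~\ref{hom}.''

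There is, however, a genuine gap at the step where you conclude that $P(\bb)$ is a direct summand of $P$. You argue: ``Since $L(\bb)$ is a composition factor of $M(\bb)$, and hence of $P$, the projective $P$ admits an epimorphism onto $L(\bb).''\@ This inference is invalid: for a projective object in a Schurian (or highest weight) category it is not true that every composition factor lies in the head. What you actually need is a surjection $P\twoheadrightarrow L(\bb)$, and the way to obtain one is to first show $M(\bb)$ sits at the top of the Verma flag you built on $P$; then $P\twoheadrightarrow M(\bb)\twoheadrightarrow L(\bb)$. This can be done in two ways, both within reach of your argument. (i)~Use the explicit bottom-to-top ordering of sections stated in Theorem~\ref{maintfthm}(1) together with the way $X$ was constructed (each $X_r$ moves only the $r$-th coordinate, and $r$ is the largest active index at that stage), which forces $M(\bb)$ to land at the very top of the flag on $P$; this is the route the paper takes. (ii)~Alternatively, since you already know $[P]=[M(\bb)]+\sum_{\bc\succ\bb}n_\bc[M(\bc)]$ with all $\bc\succ\bb$ satisfying $\lambda_\bc>\lambda_\bb$, you can invoke your own $\Ext^1_{\underline{\sO}}(M(\bc),M(\bb))=0$ observation one step earlier to reorder the flag on $P$ itself so that $M(\bb)$ is on top. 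Either patch closes the gap and makes the rest of your argument go through as written.
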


\begin{proof}
Let notation be as in 
Lemma~\ref{construction}.
Let $i_1,\dots,i_l \in I$ be defined so that $X$ is the monomial
$f_{i_l} \cdots f_{i_2} f_{i_1}$.
Let $j_k := \sqrt{z+i_k}\sqrt{z+i_k+1}$
for each $k$ and consider the supermodule
$$
P := \sF_{j_l} \cdots \sF_{j_2} \sF_{j_1} M(\ba).
$$
For each $1 \leq r < s \leq n$, we have that
$a_r > a_s$ if $\sigma_r = \sigma_s  = +$,
$a_r < a_s$ if $\sigma_r = \sigma_s  = -$,
and $a_r \neq a_s$ if $\sigma_r \neq \sigma_s$.
This implies that the weight $\lambda_\ba$ is typical and dominant,
hence $M(\ba)$ is projective by Lemma~\ref{typicaldominant}.
Since each $\sF_j$ sends projectives to projectives, 
we deduce that $P$ is projective.
Since the combinatorics of 
(\ref{haha}) matches that of Theorem~\ref{maintfthm},
we can
reinterpret
Lemma~\ref{construction} as saying
that $P$
has a Verma flag with 
one section evenly isomorphic to $M(\bb)$
and all other sections evenly isomorphic to $M(\bc)$'s for $\bc \succ
\bb$.
In fact, 
the unique section isomorphic to $M(\bb)$ appears at the top of this Verma flag, thanks
the order of the sections arising from Theorem~\ref{maintfthm}(1).
Hence, $P$ has a summand evenly isomorphic to
$P(\bb)$, and
it just remains to apply Lemma~\ref{hom}.
\end{proof}

\begin{corollary}\label{bruhat}
For $\bb \in \B$, we have that $[M(\bb):L(\bb)] = 1$. All other composition factors of $M(\bb)$
are evenly isomorphic to $L(\bc)$'s for $\bc \prec \bb$.
\end{corollary}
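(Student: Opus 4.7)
The plan is to deduce the corollary from Theorem~\ref{mainsplitthm} by a BGG-reciprocity-style argument, translating composition multiplicities of $M(\bb)$ into Verma-flag multiplicities of the indecomposable projectives $P(\bc)$ via Lemma~\ref{hom} together with the duality $\star$ from Lemma~\ref{star}.

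The first step is to establish the reciprocity formulae
\[
(P(\bc) : M(\bb)) = [M(\bb) : L(\bc)], \qquad (P(\bc) : \Pi M(\bb)) = [M(\bb) : \Pi L(\bc)].
\]
For this, I observe that since $P(\bc)$ is the projective cover of $L(\bc)$ in the Schurian category $\underline{\sO}$ and $L(\bc)$ is of type $\ttM$, a standard argument gives
\[
\dim \Hom_{\sO}(P(\bc), N)_\0 = [N : L(\bc)], \qquad
\dim \Hom_{\sO}(P(\bc), N)_\1 = [N : \Pi L(\bc)],
\]
for any $N \in \ob \sO$; the second equality uses that odd homs into $N$ are the same as even homs into $\Pi N$, and that parity-shifting $N$ sends composition factors $\Pi L(\bc)$ to $L(\bc)$. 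Applying Lemma~\ref{hom} with $M = P(\bc)$ yields $(P(\bc):M(\bb)) = [M(\bb)^\star : L(\bc)]$ and $(P(\bc):\Pi M(\bb)) = [M(\bb)^\star : \Pi L(\bc)]$. Since $\star$ is an exact contravariant superequivalence with $L(\bc)^\star \cong L(\bc)$ evenly (Lemma~\ref{star}), and hence also $(\Pi L(\bc))^\star \cong \Pi L(\bc)$, the right-hand sides agree with $[M(\bb):L(\bc)]$ and $[M(\bb):\Pi L(\bc)]$ respectively.

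It then remains to read off the Verma-flag multiplicities on the left from Theorem~\ref{mainsplitthm}. That result asserts $(P(\bc):M(\bc)) = 1$, $(P(\bc):M(\bd)) = 0$ unless $\bd = \bc$ or $\bd \succ \bc$, and crucially that no parity-shifted Verma sections appear, so that $(P(\bc):\Pi M(\bd)) = 0$ for every $\bd \in \B$. Taking $\bc = \bb$ gives $[M(\bb):L(\bb)] = 1$, taking $\bc \neq \bb$ gives $[M(\bb):L(\bc)] = 0$ unless $\bc \prec \bb$, and the absence of $\Pi M$'s in the Verma flag of any $P(\bc)$ forces $[M(\bb):\Pi L(\bc)] = 0$ for all $\bc$. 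This is exactly the content of the corollary.

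I do not expect any real obstacle: the substantive work is already contained in Theorem~\ref{mainsplitthm}, whose proof manufactured projective covers with controlled Verma flags using the superfunctors $\sF_j$ starting from a typical dominant Verma. The only delicate point is confirming that $\Pi L(\bc)$ never appears as a composition factor of $M(\bb)$, but this is an immediate consequence of the fact that the Verma flags in Theorem~\ref{mainsplitthm} contain only unshifted sections, a feature built in through the normalization of highest weight vectors via the scalar $c_\bb$ of \eqref{y1}.
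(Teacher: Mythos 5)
Your proof is correct and follows essentially the same route as the paper's own argument: the paper's proof of Corollary~\ref{bruhat} is exactly the BGG-reciprocity identities $(P(\ba):M(\bb)) = [M(\bb):L(\ba)]$ and $(P(\ba):\Pi M(\bb)) = [M(\bb):\Pi L(\ba)]$, derived from Lemmas~\ref{hom} and \ref{star}, combined with Theorem~\ref{mainsplitthm}. You have simply spelled out the intermediate step $\dim\Hom_{\sO}(P(\bc),N)_\0 = [N:L(\bc)]$ and the $\star$-invariance of composition multiplicities, which the paper leaves implicit.
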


\begin{proof}
This follows from Theorem~\ref{mainsplitthm} and the 
following analog 
of {\em BGG reciprocity}:
for $\ba, \bb \in \B$, we have that
\begin{align*}
[M(\bb):L(\ba)]
%= [M(\bb)^\star:L(\ba)]
&= \dim \Hom_{\sO}(P(\ba), M(\bb)^\star)_\0
=
(P(\ba):M(\bb)),\\
[M(\bb):\Pi L(\ba)]
%= [M(\bb)^\star:\Pi L(\ba)]
&= \dim \Hom_{\sO}(P(\ba), M(\bb)^\star)_\1
=
(P(\ba):\Pi M(\bb)).
\end{align*}
The various equalities here follow from
Lemmas~\ref{hom} and \ref{star}.
\end{proof}

\begin{corollary}\label{split}
For any $\bb \in \B$, every irreducible subquotient of the
indecomposable projective $P(\bb)$ is evenly isomorphic to $L(\ba)$
for $\ba \in \B$ with $|\WT(\ba)| = |\WT(\bb)|$.
\end{corollary}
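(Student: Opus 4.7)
The plan is to combine Theorem \ref{mainsplitthm} (which gives a Verma flag of $P(\bb)$) with Corollary \ref{bruhat} (which describes the composition factors of each Verma supermodule), together with a basic weight-conservation property of the Bruhat order. Concretely, the first step is to invoke Theorem \ref{mainsplitthm}: the supermodule $P(\bb)$ admits a filtration whose subquotients are evenly isomorphic to Verma supermodules $M(\bc)$ for various $\bc \in \B$ satisfying $\bc \succeq \bb$. Any irreducible subquotient of $P(\bb)$ is therefore a composition factor of some such $M(\bc)$, and by Corollary \ref{bruhat} each such composition factor is evenly isomorphic to $L(\ba)$ for some $\ba \preceq \bc$. (Note that no parity flips enter, since both Theorem \ref{mainsplitthm} and Corollary \ref{bruhat} are phrased in terms of even isomorphisms of the unflipped objects.)

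The only remaining point is to observe that $\bx \preceq \by$ in the Bruhat order forces $|\WT(\bx)| = |\WT(\by)|$. This is immediate from Definition \ref{bruhatdef}: the inverse dominance order on $P^n$ requires the partial sums of the weight tuples to satisfy the dominance inequality for every $s = 1,\dots,n$, with \emph{equality} at $s = n$, which is exactly the identity $|\WT(\bx)| = |\WT(\by)|$. Applying this fact to $\bb \preceq \bc$ and to $\ba \preceq \bc$ yields $|\WT(\ba)| = |\WT(\bc)| = |\WT(\bb)|$, which is the desired conclusion. I do not expect any obstacle: the corollary is a formal consequence of the two preceding results together with the weight-conservation built into the definition of $\succeq$.
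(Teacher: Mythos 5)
Your proof is correct and follows precisely the same route as the paper's: a Verma flag of $P(\bb)$ from Theorem~\ref{mainsplitthm}, the composition-factor bound from Corollary~\ref{bruhat}, and weight conservation under $\preceq$ forced by the equality-at-$s=n$ clause in Definition~\ref{bruhatdef}. The only difference is cosmetic — you spell out the final weight-conservation step, which the paper leaves as an observation.
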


\begin{proof}
By Theorem~\ref{mainsplitthm}, 
$P(\bb)$ has a Verma flag with sections $M(\bc)$ for $\bc
\succeq \bb$.
By Corollary~\ref{bruhat}, the composition factors of $M(\bc)$ are 
$L(\ba)$'s for $\ba \preceq \bc$.
Hence, every irreducible subquotient of $P(\bb)$
is evenly isomorphic to $L(\ba)$ for $\ba \in \B$ such that $\ba \preceq \bc
\succeq \bb$ for some $\bc$.
This condition implies that $|\WT(\ba)| = |\WT(\bb)|$.
\end{proof}

\section{Weak categorical action}\label{s5}

Let $\O$ be the 
Serre subcategory of $\sO$ generated by $
\{L(\bb)\:|\:\bb \in \B\},$
 i.e. it is the full
subcategory of $\sO$ consisting of all supermodules whose composition
factors are evenly isomorphic to $L(\bb)$'s for $\bb \in \B$.
Since each $L(\bb)$ is of type $\mathtt{M}$, there are no non-zero
odd morphisms between objects of $\O$.
Because of this, we forget the $\Z/2$-grading and simply view
$\O$ as a $\K$-linear category rather than a supercategory.

\begin{theorem}\label{itsplits}
We have that $\sO = \O \oplus \Pi \O$ in the sense defined in the
introduction.
\end{theorem}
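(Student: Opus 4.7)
The plan is to define $\Pi\O$ as the Serre subcategory of $\underline{\sO}$ generated by $\{\Pi L(\bb):\bb\in\B\}$, and then to show that every object $M \in \sO$ canonically decomposes as $M = M^\O \oplus M^{\Pi\O}$ with $M^\O\in\O$ and $M^{\Pi\O}\in\Pi\O$, from which the required superequivalence will follow. Since the classification of irreducibles in $\underline{\sO}$ places them in two disjoint classes $\{L(\bb)\}$ and $\{\Pi L(\bb)\}$ (no even isomorphism between them, because each $L(\bb)$ is of type $\mathtt{M}$), the subcategories $\O$ and $\Pi\O$ meet only in the zero object and between them account for every simple.

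The first step is the even Hom-vanishing $\Hom_{\sO}(M_1, M_2)_\0 = 0$ for $M_1$ and $M_2$ in opposite subcategories. The image of any even morphism is simultaneously a quotient of $M_1$ and a subobject of $M_2$, so its composition factors are constrained to lie in both families, forcing the image to vanish. Identifying odd morphisms $V\to W$ with even morphisms $V \to \Pi W$ then also gives $\Hom_{\sO}(M_1,M_2)_\1 = 0$ whenever $M_1, M_2$ both lie in the same one of $\O, \Pi\O$.

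The crux is the $\Ext^1$-vanishing $\Ext^1_{\underline{\sO}}(N_1, N_2) = 0$ for $N_1\in\Pi\O$ and $N_2\in\O$. Take a projective cover $P \twoheadrightarrow N_1$ in $\underline{\sO}$; then $P$ is a direct sum of $\Pi P(\bb)$'s, and by Corollary~\ref{split} each $P(\bb)$ has composition factors only among $\{L(\ba)\}$, so $P$ itself lies in $\Pi\O$. The kernel $K \subset P$ then remains in $\Pi\O$ by the Serre property, whence $\Hom_{\underline{\sO}}(K, N_2) = 0$ by the previous step, yielding the Ext-vanishing.

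Given this, for any $M\in\sO$ let $M^\O \subseteq M$ be the maximal subobject in $\O$ (the sum of all $\O$-subobjects, itself in $\O$ since $M$ has finite length and $\O$ is Serre). Then $M/M^\O$ lies in $\Pi\O$: any $L(\bb)$-composition factor of the quotient would lift to a strictly larger $\O$-subobject, contradicting maximality. The Ext-vanishing splits $0 \to M^\O \to M \to M/M^\O \to 0$, giving the canonical decomposition $M = M^\O \oplus M^{\Pi\O}$. This assembles into a superequivalence $\sO \simeq \O \oplus \Pi\O$, since the induced four-block decomposition of $\Hom_{\sO}$ matches the matrix description in the introduction: by the vanishings above, only the diagonal blocks (from $\O$ to $\O$ and from $\Pi\O$ to $\Pi\O$) contribute to even morphisms, and only the antidiagonal cross-blocks contribute to odd morphisms. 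The main obstacle is the $\Ext^1$-vanishing; it depends crucially on Corollary~\ref{split} to ensure that the projective cover of an object of $\Pi\O$ remains inside $\Pi\O$, without which the kernel-Hom argument would collapse.
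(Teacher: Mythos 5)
Your proof is correct and takes essentially the same route as the paper: the paper's own proof consists of the one-line observation that Corollary~\ref{split} implies all even extensions between $\Pi L(\ba)$ and $L(\bb)$ split, and then asserts the decomposition. Your write-up simply fills in the standard details the paper leaves implicit (the Hom-vanishing across the two Serre subcategories, the projective-cover argument for $\Ext^1$-vanishing, and the maximal-$\O$-subobject construction of the canonical decomposition), all of which match the intended reasoning.
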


\begin{proof}
Let $\Pi \O$ be the
Serre subcategory of $\sO$ generated by 
$\{\Pi L(\ba)\:|\:\ba \in \B\}.$
By Corollary~\ref{split}, all even extensions between $\Pi L(\ba)$ and $L(\bb)$ are split.
Hence, every supermodule in $\sO$ decomposes uniquely as a direct sum
of an object of $\O$ and an object of $\Pi \O$.
The result follows.
\end{proof}

\begin{remark}
For typical blocks,
Theorem~\ref{itsplits} has a more direct proof 
exploiting the 
action of the anticenter of $U(\g)$;
see \cite[$\S$3.1]{F}.
\end{remark}

In order to state our next theorem, 
we briefly recall the following definition due to 
Cline, Parshall and Scott \cite{CPS}:

\begin{definition}\label{hwdef}
A {\em highest weight
  category} is a Schurian category $\C$ 
in the sense of Definition~\ref{schurcat},
together with
an interval-finite poset $(\Lambda, \leq)$ indexing a complete set of
irreducible objects $\{L(\lambda)\:|\:\lambda \in \Lambda\}$, subject
to the following axiom.
For each $\lambda \in \Lambda$, let $P(\lambda)$ be a projective cover of $L(\lambda)$
in $\C$. Define the {\em standard object} $\Delta(\lambda)$ to be the largest
quotient of $P(\lambda)$ such that $[\Delta(\lambda):L(\lambda)] = 1$
and $[\Delta(\lambda):L(\mu)] = 0$ for $\mu \not\leq \lambda$.
Then we require that $P(\lambda)$ has a filtration with top section isomorphic to
$\Delta(\lambda)$
and all other sections of the form $\Delta(\mu)$ for $\mu > \lambda$.
\end{definition}

\begin{theorem}\label{itshw}
The category $\O$ is a highest weight category with
weight poset $(\B, \preceq)$.
Its standard objects are the Verma supermodules $\{M(\bb)\:|\:\bb \in \B\}$.
\end{theorem}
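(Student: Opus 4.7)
The plan is to verify the three axioms of Definition~\ref{hwdef} for the data $(\O, (\B, \preceq))$: that $\O$ is Schurian, that $(\B, \preceq)$ is interval-finite, and that each Verma supermodule $M(\bb)$ serves as the standard object $\Delta(\bb)$, with $P(\bb)$ admitting a standard filtration of the required shape. First, $\O$ is Schurian: this follows from Theorem~\ref{itsplits}, which writes $\sO = \O \oplus \Pi\O$, together with the fact (recorded around Definition~\ref{schurcat}) that $\underline\sO$ is already Schurian; note that Corollary~\ref{split} ensures that the projective cover of $L(\bb)$ in $\underline\sO$ lies entirely in the summand $\O$, so $\O$ has enough projectives, and the irreducibles of $\O$ are precisely $\{L(\bb)\:|\:\bb \in \B\}$. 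Interval-finiteness of $(\B, \preceq)$ is a purely combinatorial check using Lemma~\ref{eleme}: for $\bc \preceq \ba \preceq \bb$, the two-sided inequalities there squeeze each step function $j \mapsto N_{[1,s]}(\ba, j)$ between the corresponding ones for $\bc$ and $\bb$, and reading these constraints off for $s = 1, 2, \ldots, n$ in turn pins each entry $a_s$ of $\ba$ to a finite set of values.

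The core of the argument is to identify $\Delta(\bb)$ with $M(\bb)$. Projectivity of $P(\bb)$ applied to $M(\bb) \twoheadrightarrow L(\bb)$ yields an even surjection $P(\bb) \twoheadrightarrow M(\bb)$, exhibiting $M(\bb)$ as a quotient of $P(\bb)$, and Corollary~\ref{bruhat} shows that $M(\bb)$ satisfies the candidate conditions for the standard object. To see it is the largest such, take any quotient $Q$ of $P(\bb)$ with $[Q:L(\bb)] = 1$ and $[Q:L(\bc)] = 0$ for $\bc \not\preceq \bb$. Because $\bc \preceq \bb$ forces $\lambda_\bc \leq \lambda_\bb$ (as noted after Definition~\ref{bruhatdef}), the only composition factor of $Q$ carrying $\lambda_\bb$-weight is $L(\bb)$ itself, so $Q_{\lambda_\bb} \cong \u(\bb)$ as an $\h$-supermodule by Lemma~\ref{projectivity} (the alternative $\Pi\u(\bb)$ is excluded since $\Pi L(\bb) \notin \ob \O$). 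By the maximality of $\lambda_\bb$ among the weights of $Q$, the strictly positive part of $\b$ annihilates $Q_{\lambda_\bb}$, so the inclusion $\u(\bb) \cong Q_{\lambda_\bb} \hookrightarrow Q$ extends via the universal property of induction to an even $\g$-homomorphism $M(\bb) = U(\g) \otimes_{U(\b)} \u(\bb) \to Q$. Since $Q$ has unique irreducible head $L(\bb)$ and this head is generated as a $\g$-module by $L(\bb)_{\lambda_\bb}$, Nakayama's lemma forces $Q$ to be generated by $Q_{\lambda_\bb}$, so the homomorphism is surjective and $Q$ is a quotient of $M(\bb)$. Hence $M(\bb) = \Delta(\bb)$.

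Finally, Theorem~\ref{mainsplitthm} supplies the remaining axiom directly: the indecomposable projective $P(\bb)$ carries a Verma flag with top section $M(\bb) = \Delta(\bb)$ and other sections $M(\bc) = \Delta(\bc)$ for $\bc \succ \bb$. The main obstacle lies in the standard-object identification, specifically in pinning down $Q_{\lambda_\bb}$ as the correct Clifford module $\u(\bb)$ rather than its parity flip $\Pi\u(\bb)$ and extracting an $\h$-equivariant copy of $\u(\bb)$ to feed into the universal property of the Verma supermodule; it is here that both the split $\sO = \O \oplus \Pi\O$ from Theorem~\ref{itsplits} and the $\h$-supermodule classification of Lemma~\ref{projectivity} play essential roles.
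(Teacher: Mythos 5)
Your proposal is correct and follows the same overall strategy as the paper's proof, which is considerably terser: the paper simply says that the identification of $M(\bb)$ with $\Delta(\bb)$ "follows using the filtration just described plus Corollary~\ref{bruhat}", whereas you spell out the full universal-property argument. Your extra details are sound: the point that $Q_{\lambda_\bb}$ is evenly isomorphic to $\u(\bb)$ (not $\Pi\u(\bb)$) because all composition factors of $Q$ lie in $\O$, the maximality of $\lambda_\bb$ allowing the induction-adjunction, and the Nakayama step. One small addendum worth recording: to conclude that $M(\bb)$ is the \emph{largest} quotient in the sense of Definition~\ref{hwdef}, one should also note that your surjection $M(\bb)\twoheadrightarrow Q$ is compatible with the canonical maps from $P(\bb)$; this is automatic because $\dim\Hom_{\O}(P(\bb),M(\bb))=[M(\bb):L(\bb)]=1=\dim\Hom_{\O}(P(\bb),Q)$ in a Schurian category, so the composite $P(\bb)\to M(\bb)\to Q$ is a scalar multiple of the canonical quotient map, giving $\ker(P(\bb)\to M(\bb))\subseteq\ker(P(\bb)\to Q)$ as required.
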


\begin{proof}
It is clear that $\O$ is a Schurian category with isomorphism classes
of irreducible objects represented by $\{L(\bb)\:|\:\bb \in \B\}$.
By Theorem~\ref{mainsplitthm}, 
$P(\bb)$ has a Verma flag with $M(\bb)$ at the top and other sections
that are evenly isomorphic to $M(\bc)$'s for $\bc \succ
\bb$.
It just remains to observe that the Verma supermodules $M(\bb)$
coincide with the standard objects $\Delta(\bb)$. This follows using
the filtration just described plus Corollary~\ref{bruhat}.
\end{proof}

\begin{remark}
By Lemma~\ref{star}, the duality $\star$ on $\sO$ restricts to 
a duality
$\star:\O \rightarrow \O$ fixing isomorphism classes of irreducible
objects.
\end{remark}

Next, take $i \in I$ and set $j := \sqrt{z+i}\sqrt{z+i+1}$.
Theorem~\ref{maintfthm} implies that the exact functors $\sF_j$ and $\sE_j$
send the standard objects in $\O$ to objects of $\O$ 
with a Verma flag.
Hence, they send arbitrary objects in $\O$ to objects of $\O$.
Thus, their restrictions define endofunctors
\begin{equation}\label{them}
F_i := \sF_j|_{\O}:\O\rightarrow\O,
\quad
E_i := \sE_j|_{\O}:\O \rightarrow \O.
\end{equation}
Again, these functors are both left and right adjoint to each other.
Let $\O^\Delta$ be the full subcategory of $\O$ consisting of all
objects possessing a Verma flag.
This is an exact subcategory of $\O$. Its complexified Grothendieck group
$\CC \otimes_{\Z} K_0(\O^\Delta)$
has basis $\{[M(\bb)]\:|\:\bb \in \B\}$.

\begin{theorem}\label{itacts}
For each $i \in I$,
the functors $F_i$ and $E_i$ are exact endofunctors of  $\O^\Delta$.
Moreover, if we identify $\CC \otimes_{\Z} K_0(\O^\Delta)$ with
$V^{\otimes \bsigma}$
so $[M(\bb)] \leftrightarrow v_\bb$ for each $\bb \in \B$, then the induced endomorphisms
$[F_i]$ and $[E_i]$ of the Grothendieck group
act in the same way as the Chevalley generators
$f_i$ and $e_i$ of $\mathfrak{sl}_\infty$.
\end{theorem}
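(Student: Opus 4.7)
The plan is to deduce this theorem directly from the combinatorial content of Theorem~\ref{maintfthm} together with the adjointness/exactness properties of $\sF_j$ and $\sE_j$ established in Section~\ref{s3}. The heavy lifting has already been done; what remains is essentially bookkeeping.

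First I will verify that $F_i$ and $E_i$ restrict to well-defined exact endofunctors of $\O^\Delta$. Exactness is automatic: $\sF_j$ and $\sE_j$ are biadjoint (see the lemma preceding Section~\ref{s4}), hence exact on $\underline{\sO}$, and $\O$ is a Serre subcategory of $\sO$, so their restrictions $F_i$ and $E_i$ remain exact on $\O$. To see they preserve $\O^\Delta$, it suffices by exactness and induction on the length of a Verma flag to check this on a single Verma supermodule $M(\bb)$. For this, Theorem~\ref{maintfthm}(1) exhibits a filtration of $\sF_j\,M(\bb)$ whose sections are \emph{evenly} isomorphic to Verma supermodules $M(\bb+\sigma_t \bd_t)$ (not to their parity flips), and each such $M(\bb+\sigma_t\bd_t)$ lies in $\O$ because its composition factors are $L(\bc)$'s by Corollary~\ref{bruhat}. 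Hence $F_i\,M(\bb) \in \ob \O^\Delta$, and an identical argument using Theorem~\ref{maintfthm}(2) handles $E_i$.

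Passing to the Grothendieck group, the filtrations furnished by Theorem~\ref{maintfthm} immediately yield
\[
[F_i\,M(\bb)] = \sum_{\substack{1 \leq t \leq n\\ \isig_t(\bb)=\mathtt{f}}} [M(\bb+\sigma_t\bd_t)],
\qquad
[E_i\,M(\bb)] = \sum_{\substack{1 \leq t \leq n\\ \isig_t(\bb)=\mathtt{e}}} [M(\bb-\sigma_t\bd_t)].
\]
Comparing this with formula~(\ref{haha}) for the action of the Chevalley generators on the monomial basis $\{v_\bb\:|\:\bb\in\B\}$ of $V^{\otimes\bsigma}$, the induced operators $[F_i]$ and $[E_i]$ agree with $f_i$ and $e_i$ on the basis $\{[M(\bb)]\}$ under the identification $[M(\bb)]\leftrightarrow v_\bb$, hence on all of $V^{\otimes\bsigma}$.

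The closest thing to an obstacle is the parity bookkeeping: when restricting $\sF_j$ and $\sE_j$ from $\sO$ to $\O$, one must ensure that the Verma sections are genuine $M(\bc)$'s rather than $\Pi M(\bc)$'s, since otherwise the classes in $K_0(\O^\Delta)$ would be wrong (or the functor would not even preserve $\O$). This is already handled by the explicit sign/Clifford calculation in the proof of Lemma~\ref{eigenvalues}, which is what feeds into the "evenly isomorphic" clauses of Theorem~\ref{maintfthm}; so no genuine difficulty remains.
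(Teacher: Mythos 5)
Your proof is correct and follows the same approach as the paper: the paper's own proof of this theorem is just the one-line instruction ``Compare Theorem~\ref{maintfthm} with (\ref{haha}),'' relying on the surrounding discussion (exactness and biadjointness of $\sF_j, \sE_j$, preservation of $\O$ via Verma flags) which you have simply spelled out in full.
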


\begin{proof}
Compare
Theorem~\ref{maintfthm} with (\ref{haha}).
\end{proof}

Thus, we have constructed a highest weight category $\O$ with weight
poset $(\B,\preceq)$, and
equipped it with a weak categorical action of the Lie algebra
$\mathfrak{sl}_\infty$
in the sense of \cite{CR, Rou}.

\section{Strong categorical action}\label{s6}

In this section, we upgrade the weak categorical action of
$\mathfrak{sl}_\infty$
on $\mathcal O$ constructed so far to a strong categorical action.
For the following definition, we represent morphisms in a strict monoidal category 
via the usual string calculus, adopting the same conventions for
horizontal and vertical composition
as \cite{KL1}.

\begin{definition}\label{qhdef}
The \textit{quiver Hecke category} 
of type $\mathfrak{sl}_\infty$ 
is the strict $\K$-linear monoidal category 
$\QH$ 
with objects generated by
the set
$I$ from (\ref{jdef}), and morphisms
generated
by
$\mathord{
\begin{tikzpicture}[baseline = -2]
	\draw[-,thick,darkred] (0.08,-.15) to (0.08,.3);
      \node at (0.08,0.05) {$\color{darkred}\bullet$};
   \node at (0.08,-.25) {$\scriptstyle{i}$};
\end{tikzpicture}
}:i \rightarrow i$ and
$\mathord{
\begin{tikzpicture}[baseline = -2]
	\draw[-,thick,darkred] (0.18,-.15) to (-0.18,.3);
	\draw[-,thick,darkred] (-0.18,-.15) to (0.18,.3);
   \node at (-0.18,-.25) {$\scriptstyle{i_2}$};
   \node at (0.18,-.25) {$\scriptstyle{i_1}$};
\end{tikzpicture}
}:i_2 \otimes i_1 \rightarrow i_1 \otimes i_2$,
subject to the following relations:
\begin{align*}
%y and tau
\mathord{
\begin{tikzpicture}[baseline = 2]
	\draw[-,thick,darkred] (0.25,.6) to (-0.25,-.2);
	\draw[-,thick,darkred] (0.25,-.2) to (-0.25,.6);
  \node at (-0.25,-.28) {$\scriptstyle{i_2}$};
   \node at (0.25,-.28) {$\scriptstyle{i_1}$};
      \node at (-0.14,0.42) {$\color{darkred}\bullet$};
\end{tikzpicture}
}
-
\mathord{
\begin{tikzpicture}[baseline =2]
	\draw[-,thick,darkred] (0.25,.6) to (-0.25,-.2);
	\draw[-,thick,darkred] (0.25,-.2) to (-0.25,.6);
  \node at (-0.25,-.28) {$\scriptstyle{i_2}$};
   \node at (0.25,-.28) {$\scriptstyle{i_1}$};
      \node at (0.14,-0.02) {$\color{darkred}\bullet$};
\end{tikzpicture}
}
&=
\mathord{
\begin{tikzpicture}[baseline = 2]
	\draw[-,thick,darkred] (0.25,.6) to (-0.25,-.2);
	\draw[-,thick,darkred] (0.25,-.2) to (-0.25,.6);
  \node at (-0.25,-.28) {$\scriptstyle{i_2}$};
   \node at (0.25,-.28) {$\scriptstyle{i_1}$};
      \node at (-0.13,-0.02) {$\color{darkred}\bullet$};
\end{tikzpicture}
}
-\mathord{
\begin{tikzpicture}[baseline = 2]
	\draw[-,thick,darkred] (0.25,.6) to (-0.25,-.2);
	\draw[-,thick,darkred] (0.25,-.2) to (-0.25,.6);
  \node at (-0.25,-.28) {$\scriptstyle{i_2}$};
   \node at (0.25,-.28) {$\scriptstyle{i_1}$};
      \node at (0.14,0.42) {$\color{darkred}\bullet$};
\end{tikzpicture}
}
=
\left\{
\begin{array}{ll}
\mathord{
\begin{tikzpicture}[baseline = -1]
 	\draw[-,thick,darkred] (0.08,-.3) to (0.08,.4);
	\draw[-,thick,darkred] (-0.28,-.3) to (-0.28,.4);
   \node at (-0.28,-.4) {$\scriptstyle{i_2}$};
   \node at (0.08,-.4) {$\scriptstyle{i_1}$};  
   \end{tikzpicture} 
   }
   & \text{if $i_1 = i_2$,}\\
	\:\:\:0 & \text{if $i_1 \neq i_2$;}\\
\end{array}
\right. \end{align*}\begin{align*}
%tau ^2 
\mathord{
\begin{tikzpicture}[baseline = 8]
	\draw[-,thick,darkred] (0.28,.4) to[out=90,in=-90] (-0.28,1.1);
	\draw[-,thick,darkred] (-0.28,.4) to[out=90,in=-90] (0.28,1.1);
	\draw[-,thick,darkred] (0.28,-.3) to[out=90,in=-90] (-0.28,.4);
	\draw[-,thick,darkred] (-0.28,-.3) to[out=90,in=-90] (0.28,.4);
  \node at (-0.28,-.4) {$\scriptstyle{i_2}$};
  \node at (0.28,-.4) {$\scriptstyle{i_1}$};
\end{tikzpicture}
}
&=
\left\{
\begin{array}{ll}
\:\:\:0&\text{if $i_1 = i_2$,}\\
(i_2-i_1)
\mathord{
\begin{tikzpicture}[baseline = 0]
	\draw[-,thick,darkred] (0.08,-.3) to (0.08,.4);
	\draw[-,thick,darkred] (-0.28,-.3) to (-0.28,.4);
   \node at (-0.28,-.4) {$\scriptstyle{i_2}$};
   \node at (0.08,-.4) {$\scriptstyle{i_1}$};
     \node at (0.08,0.05) {$\color{darkred}\bullet$};
\end{tikzpicture}
}
+ (i_1-i_2)
\mathord{
\begin{tikzpicture}[baseline = 0]
	\draw[-,thick,darkred] (0.08,-.3) to (0.08,.4);
	\draw[-,thick,darkred] (-0.28,-.3) to (-0.28,.4);
   \node at (-0.28,-.4) {$\scriptstyle{i_2}$};
   \node at (0.08,-.4) {$\scriptstyle{i_1}$};
      \node at (-0.28,0.05) {$\color{darkred}\bullet$};
\end{tikzpicture}
}
&\text{if $|i_1 - i_2| = 1$,}\\
\mathord{
\begin{tikzpicture}[baseline = 0]
	\draw[-,thick,darkred] (0.08,-.3) to (0.08,.4);
	\draw[-,thick,darkred] (-0.28,-.3) to (-0.28,.4);
   \node at (-0.28,-.4) {$\scriptstyle{i_2}$};
   \node at (0.08,-.4) {$\scriptstyle{i_1}$};
\end{tikzpicture}
}&\text{if $|i_1 - i_2| > 1$;}\\
\end{array}
\right. 
\end{align*}\begin{align*}
%braid relation
\mathord{
\begin{tikzpicture}[baseline = 2]
	\draw[-,thick,darkred] (0.45,.8) to (-0.45,-.4);
	\draw[-,thick,darkred] (0.45,-.4) to (-0.45,.8);
        \draw[-,thick,darkred] (0,-.4) to[out=90,in=-90] (-.45,0.2);
        \draw[-,thick,darkred] (-0.45,0.2) to[out=90,in=-90] (0,0.8);
   \node at (-0.45,-.5) {$\scriptstyle{i_3}$};
   \node at (0,-.5) {$\scriptstyle{i_2}$};
  \node at (0.45,-.5) {$\scriptstyle{i_1}$};
\end{tikzpicture}
}
\!\!-
\!\!\!
\mathord{
\begin{tikzpicture}[baseline = 2]
	\draw[-,thick,darkred] (0.45,.8) to (-0.45,-.4);
	\draw[-,thick,darkred] (0.45,-.4) to (-0.45,.8);
        \draw[-,thick,darkred] (0,-.4) to[out=90,in=-90] (.45,0.2);
        \draw[-,thick,darkred] (0.45,0.2) to[out=90,in=-90] (0,0.8);
   \node at (-0.45,-.5) {$\scriptstyle{i_3}$};
   \node at (0,-.5) {$\scriptstyle{i_2}$};
  \node at (0.45,-.5) {$\scriptstyle{i_1}$};
\end{tikzpicture}
}
&=
\left\{
\begin{array}{ll}
(i_2 - i_1) 
\mathord{
\begin{tikzpicture}[baseline = -1]
	\draw[-,thick,darkred] (0.44,-.3) to (0.44,.4);
	\draw[-,thick,darkred] (0.08,-.3) to (0.08,.4);
	\draw[-,thick,darkred] (-0.28,-.3) to (-0.28,.4);
   \node at (-0.28,-.4) {$\scriptstyle{i_3}$};
   \node at (0.08,-.4) {$\scriptstyle{i_2}$};
   \node at (0.44,-.4) {$\scriptstyle{i_1}$};
\end{tikzpicture}
}
&
\text{if $i_1 = i_3$ and $|i_1 - i_2|= 1$,}\\
\:\:\:0 &\text{otherwise.}
\end{array}
\right.
\end{align*}
Let
$I^d$ denote the set of words
$\bi = i_d\cdots i_1$ of length $d$ in the alphabet $I$, 
and identify $\bi \in I^d$ with the object 
$i_d \otimes \cdots \otimes i_1 \in \ob \QH$.
Then, 
the locally unital algebra
\begin{equation}\label{recla}
QH_d := \bigoplus_{\bi,\bi' \in I^d}
\Hom_{\QH}(\bi,\bi')
\end{equation}
is the {\em quiver Hecke algebra} of type $\mathfrak{sl}_\infty$
defined originally by Khovanov and Lauda \cite{KL1} and Rouquier
\cite{Rou}.
\end{definition}

Recall for a $\K$-linear category $\mathcal{C}$
that there is an associated strict $\K$-linear monoidal category
$\mathcal{E}nd(\mathcal{C})$ 
consisting of  $\K$-linear endofunctors and natural transformations.
The remainder of the section will be devoted to the proof of the
following theorem.

\begin{theorem}\label{itlifts}
There is a strict monoidal functor
$\Phi:\QH \rightarrow \mathcal{E}nd(\O)$
sending the generating objects $i \in I$ to the endofunctors
$F_i$ from (\ref{them}).
Moreover,
for all $M \in \ob \O$ and $i \in I$,
the 
endomorphism 
$F_i M \rightarrow F_i M$ defined by
the natural transformation
$\Phi\Big(
\mathord{
\begin{tikzpicture}[baseline = -2]
	\draw[-,thick,darkred] (0.08,-.15) to (0.08,.3);
      \node at (0.08,0.05) {$\color{darkred}\bullet$};
   \node at (0.08,-.25) {$\scriptstyle{i}$};
\end{tikzpicture}
}
\Big)$
is nilpotent.
\end{theorem}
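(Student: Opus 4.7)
My plan is to build $\Phi$ in two stages. In the first stage, I would enlarge the supernatural transformation $x:\sF \Rightarrow \sF$ to an \emph{affine-type} structure on all powers $\sF^d$. In the second stage, I would pass to $\O$ and to generalized eigenspaces for the commuting copies of $x$, and apply \cite{KKT} to convert this affine data into the quiver Hecke relations of Definition \ref{qhdef}.

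For the first stage, I would construct an even supernatural transformation $T:\sF\sF \Rightarrow \sF\sF$ from the super-braiding $U\otimes U \to U\otimes U$, modified by multiplication by an element of $U(\widehat\g)$ so as to produce a $\g$-supermodule map; the natural candidate is built from the same odd Casimir tensor ideas used in Section \ref{s3} to define $x$. Writing $x_1:=x\,\id_\sF$ and $x_2:=\id_\sF\,x$ on $\sF\sF$, I would then check that $(T, x_1, x_2)$ satisfies the relations of an affine Hecke--Clifford (or Nazarov--Wenzl) algebra in the sense used in \cite{HKS, KKT}, parallel to \cite[Theorem 7.4.1]{HKS}. Iterating produces such an action on every $\sF^d$.

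For the second stage, I would restrict to $\O$ and decompose $\sF^d$ into the simultaneous generalized eigenspaces of $x_1,\dots,x_d$, with eigenvalues in $J^d$. Because the irreducibles in $\O$ are of type $\mathtt{M}$, the Clifford piece of the ambient affine superalgebra acts on each such summand in a controllable way, so the localized affine Hecke--Clifford action on the summand indexed by a word $\bi = i_d\cdots i_1 \in I^d$ factors through the quiver Hecke algebra $QH_d$ of type $\mathfrak{sl}_\infty$ by the main theorem of \cite{KKT}. Explicitly, the dot $\Phi(\bullet_i)$ should come out as a polynomial in $x - j\cdot\id$ with $j := \sqrt{z+i}\sqrt{z+i+1}$, while $\Phi$ of a crossing becomes a renormalized version of $T$, with the renormalizations dictated by comparing $i_1, i_2$ as in Definition \ref{qhdef}. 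Assembling over $d$ gives the strict monoidal functor $\Phi$. Nilpotence of $\Phi(\bullet_i)$ on $F_iM$ is then automatic: it is a polynomial in $x_M$ vanishing at $j$, and $F_iM = \sF_jM$ is by definition the generalized $j$-eigenspace of $x_M$.

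The main obstacle will be the explicit verification of the affine Hecke--Clifford relations between $T$ and the $x_k$. This requires a careful calculation on vectors of the form $u\otimes u'\otimes v \in U\otimes U\otimes M$, with super signs produced by commuting odd elements of $\widehat\g$ across tensor factors tracked precisely, and with the defect terms contributed by the odd Casimir assembling into exactly the prescribed affine relations. Once those relations are in place, the conversion to quiver Hecke form via \cite{KKT}, together with the checks of monoidality and nilpotence, are essentially mechanical.
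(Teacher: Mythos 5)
Your two-stage plan is essentially the route the paper takes: the first stage is exactly the content of Theorem~\ref{hkst} (a strict monoidal superfunctor $\Psi:\AHC\to\mathcal{E}nd(\sO)$ sending the dot, open dot and crossing to $x$, $c$, $t$, paralleling \cite[Theorem 7.4.1]{HKS}), and the second stage is the paper's combination of Corollary~\ref{minpoly}, the completion $\widehat{AHC}_d$, the Kang--Kashiwara--Tsuchioka isomorphism from Theorem~\ref{kkttheorem}, and restriction to $\O$. One small correction to keep the argument tight: the KKT theorem identifies $\widehat{AHC}_d$ with the completed quiver Hecke--\emph{Clifford} superalgebra $\widehat{QHC}_d$, not with $QH_d$ directly; to land in $\mathcal{E}nd(\O)$ one further needs the embedding $\inc:QH_d\hookrightarrow(QHC_d)_\0$ given by choosing the positive square root $j_r=\sqrt{z+i_r}\sqrt{z+i_r+1}$ for each $i_r$, and then the projection $\pr$ onto $\O$ which picks out exactly the eigenspaces with $j\in J_+$ and discards the odd part. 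Also note that the crossing $t$ is simply the signed flip $u\otimes v\otimes m\mapsto(-1)^{|u||v|}v\otimes u\otimes m$ and needs no correction by an element of $U(\widehat\g)$; the interesting defect terms appear only in its commutation relations with the dots $x_k$, not in the definition of $t$ itself.
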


In order to construct $\Phi$, we 
need to pass through two intermediate objects $\AHC$, the (degenerate)
affine Hecke-Clifford supercategory, and $\QHC$, which is a certain
quiver Hecke-Clifford supercategory in the sense of \cite{KKT}.
Both $\AHC$ and $\QHC$ are examples of (strict) {\em monoidal
supercategories}, meaning that they are supercategories equipped
with a monoidal product in 
an appropriate enriched sense. We refer the reader 
to the introduction
of \cite{BE} for the precise definition, just recalling that
morphisms in a monoidal supercategory satisfy the
{\em super interchange law} rather than the usual interchange law of a
monoidal category: in terms of the string calculus as in \cite{BE} we have that
\begin{equation}
\mathord{
\begin{tikzpicture}[baseline = 0]
	\draw[-,thick,darkpurple] (0.08,-.4) to (0.08,-.23);
	\draw[-,thick,darkpurple] (0.08,.4) to (0.08,.03);
      \draw[thick,darkpurple] (0.08,-0.1) circle (4pt);
   \node at (0.08,-0.1) {\color{darkpurple}$\scriptstyle{g}$};
	\draw[-,thick,darkpurple] (-.8,-.4) to (-.8,-.03);
	\draw[-,thick,darkpurple] (-.8,.4) to (-.8,.23);
      \draw[thick,darkpurple] (-.8,0.1) circle (4pt);
   \node at (-.8,.1) {\color{darkpurple}$\scriptstyle{f}$};
\end{tikzpicture}
}
\quad=\quad
\mathord{
\begin{tikzpicture}[baseline = 0]
	\draw[-,thick,darkpurple] (0.08,-.4) to (0.08,-.13);
	\draw[-,thick,darkpurple] (0.08,.4) to (0.08,.13);
      \draw[thick,darkpurple] (0.08,0) circle (4pt);
   \node at (0.08,0) {\color{darkpurple}$\scriptstyle{g}$};
	\draw[-,thick,darkpurple] (-.8,-.4) to (-.8,-.13);
	\draw[-,thick,darkpurple] (-.8,.4) to (-.8,.13);
      \draw[thick,darkpurple] (-.8,0) circle (4pt);
   \node at (-.8,0) {\color{darkpurple}$\scriptstyle{f}$};
\end{tikzpicture}
}
\quad=\quad
(-1)^{|f||g|}\:
\mathord{
\begin{tikzpicture}[baseline = 0]
	\draw[-,thick,darkpurple] (0.08,-.4) to (0.08,-.03);
	\draw[-,thick,darkpurple] (0.08,.4) to (0.08,.23);
      \draw[thick,darkpurple] (0.08,0.1) circle (4pt);
   \node at (0.08,0.1) {\color{darkpurple}$\scriptstyle{g}$};
	\draw[-,thick,darkpurple] (-.8,-.4) to (-.8,-.23);
	\draw[-,thick,darkpurple] (-.8,.4) to (-.8,.03);
      \draw[thick,darkpurple] (-.8,-0.1) circle (4pt);
   \node at (-.8,-.1) {\color{darkpurple}$\scriptstyle{f}$};
\end{tikzpicture}
}
\end{equation}
for homogeneous morphisms $f$ and $g$ of parities $|f|$ and $|g|$, respectively.

\begin{definition}
The (degenerate) {\em affine Hecke-Clifford supercategory} $\AHC$
is the strict 
monoidal supercategory
with a single generating object $1$, even generating morphisms
$\mathord{
\begin{tikzpicture}[baseline = -1]
	\draw[-,thick,darkblue] (0.08,-.15) to (0.08,.3);
      \node at (0.08,0.08) {$\color{darkblue}\bullet$};
\end{tikzpicture}
}:1 \rightarrow 1$ and
$\mathord{
\begin{tikzpicture}[baseline = -1]
	\draw[-,thick,darkblue] (0.18,-.15) to (-0.18,.3);
	\draw[-,thick,darkblue] (-0.18,-.15) to (0.18,.3);
\end{tikzpicture}
}:1 \otimes 1 \rightarrow 1 \otimes 1$,
and an odd generating morphism
$\mathord{
\begin{tikzpicture}[baseline = -1]
	\draw[-,thick,darkblue] (0.08,-.15) to (0.08,.3);
      \node at (0.08,0.08) {$\color{darkblue}\circ$};
\end{tikzpicture}
}:1 \rightarrow 1$.
These are subject to the following relations:
\begin{align*}
%xc = - c x
\mathord{
\begin{tikzpicture}[baseline = 2]
	\draw[-, thick,darkblue] (0, 0.6) to (0, -0.3);
	\node at (0, -0.00) {$\color{darkblue} \circ$};
	\node at (0, 0.3) {$\color{darkblue} \bullet$};
\end{tikzpicture}
} &= 
- 
\mathord{
\begin{tikzpicture}[baseline = 2]
	\draw[-, thick,darkblue] (0, 0.6) to (0, -0.3);
	\node at (0, -0.0) {$\color{darkblue} \bullet$};
	\node at (0, 0.3) {$\color{darkblue} \circ$};
\end{tikzpicture}
}\:,
% c^2 = 1
&\mathord{
\begin{tikzpicture}[baseline = 2]
	\draw[-, thick,darkblue] (0, 0.6) to (0, -0.3);
	\node at (0, -0.0) {$\color{darkblue} \circ$};
	\node at (0, 0.3) {$\color{darkblue} \circ$};
\end{tikzpicture}
}
&= 
\mathord{
\begin{tikzpicture}[baseline = 2]
	\draw[-, thick,darkblue] (0, 0.6) to (0, -0.3);
\end{tikzpicture}
}\:,
&
%t^2 = 1
\mathord{
\begin{tikzpicture}[baseline = 8.5]
	\draw[-,thick,darkblue] (0.2,.4) to[out=90,in=-90] (-0.2,.9);
	\draw[-,thick,darkblue] (-0.2,.4) to[out=90,in=-90] (0.2,.9);
	\draw[-,thick,darkblue] (0.2,-.1) to[out=90,in=-90] (-0.2,.4);
	\draw[-,thick,darkblue] (-0.2,-.1) to[out=90,in=-90] (0.2,.4);
\end{tikzpicture}}
 &= 
\mathord{
\begin{tikzpicture}[baseline = 8.5]
	\draw[-,thick,darkblue] (0.1,-.1) to (0.1,.9);
	\draw[-,thick,darkblue] (-0.3,-.1) to (-0.3,.9);
\end{tikzpicture}
}\:,
\end{align*}\begin{align*}
%c and t (Only need one because t ^1 = )
\mathord{
\begin{tikzpicture}[baseline = 4]
	\draw[-,thick,darkblue] (0.25,.6) to (-0.25,-.2);
	\draw[-,thick,darkblue] (0.25,-.2) to (-0.25,.6);
      \node at (-0.14,0.42) {$\color{darkblue}\circ$};
\end{tikzpicture}
}
 &= 
\mathord{
\begin{tikzpicture}[baseline = 4]
	\draw[-,thick,darkblue] (0.25,.6) to (-0.25,-.2);
	\draw[-,thick,darkblue] (0.25,-.2) to (-0.25,.6);
      \node at (0.14,-0.02) {$\color{darkblue}\circ$};
\end{tikzpicture}
}
&%x and t relation
\mathord{
\begin{tikzpicture}[baseline = 4]
	\draw[-,thick,darkblue] (0.25,.6) to (-0.25,-.2);
	\draw[-,thick,darkblue] (0.25,-.2) to (-0.25,.6);
      \node at (-0.14,0.42) {$\color{darkblue}\bullet$};
\end{tikzpicture}
}
-
\mathord{
\begin{tikzpicture}[baseline = 4]
	\draw[-,thick,darkblue] (0.25,.6) to (-0.25,-.2);
	\draw[-,thick,darkblue] (0.25,-.2) to (-0.25,.6);
      \node at (0.14,-0.02) {$\color{darkblue}\bullet$};
\end{tikzpicture}
}
&= 
\mathord{
\begin{tikzpicture}[baseline = 4]
 	\draw[-,thick,darkblue] (0.08,-.2) to (0.08,.6);
	\draw[-,thick,darkblue] (-0.28,-.2) to (-0.28,.6);
\end{tikzpicture}
} 
- \!
\mathord{
\begin{tikzpicture}[baseline = 4]
 	\draw[-,thick,darkblue] (0.08,-.2) to (0.08,.6);
	\draw[-,thick,darkblue] (-0.28,-.2) to (-0.28,.6);
	\node at (0.08, 0.2) {$\color{darkblue}\circ$};
	\node at (-0.28, 0.2){$\color{darkblue}\circ$};
\end{tikzpicture}
}\:,
&%braid relation
\mathord{
\begin{tikzpicture}[baseline = 3]
	\draw[-,thick,darkblue] (0.45,.8) to (-0.45,-.4);
	\draw[-,thick,darkblue] (0.45,-.4) to (-0.45,.8);
        \draw[-,thick,darkblue] (0,-.4) to[out=90,in=-90] (-.45,0.2);
        \draw[-,thick,darkblue] (-0.45,0.2) to[out=90,in=-90] (0,0.8);
\end{tikzpicture}
}
&=
\mathord{
\begin{tikzpicture}[baseline = 3]
	\draw[-,thick,darkblue] (0.45,.8) to (-0.45,-.4);
	\draw[-,thick,darkblue] (0.45,-.4) to (-0.45,.8);
        \draw[-,thick,darkblue] (0,-.4) to[out=90,in=-90] (.45,0.2);
        \draw[-,thick,darkblue] (0.45,0.2) to[out=90,in=-90] (0,0.8);
\end{tikzpicture}
}\,.
\end{align*}
Denoting the object $1^{\otimes d} \in \ob \AHC$
simply by $d$,
the (degenerate) {\em affine Hecke-Clifford superalgebra}
is the superalgebra
\begin{equation}\label{train1}
AHC_d := \End_{\AHC}(d).
\end{equation}
This was introduced originally by Nazarov \cite[$\S$3]{N}.
\end{definition}

For a supercategory $\C$,
we write $\mathcal{E}nd(\C)$
for the strict monoidal 
supercategory 
consisting of superfunctors 
and supernatural transformations.

\begin{theorem}\label{hkst}
There is a strict monoidal superfunctor
$\Psi:\AHC \rightarrow \mathcal{E}nd(\sO)$
sending the generating object
$1$ to the endofunctor $\sF = U \otimes -$ from (\ref{sfdef}), and the generating morphisms 
$\mathord{
\begin{tikzpicture}[baseline = -1]
	\draw[-,thick,darkblue] (0.08,-.15) to (0.08,.3);
      \node at (0.08,0.08) {$\color{darkblue}\bullet$};
\end{tikzpicture}
}$,
$\mathord{
\begin{tikzpicture}[baseline = -1]
	\draw[-,thick,darkblue] (0.08,-.15) to (0.08,.3);
      \node at (0.08,0.08) {$\color{darkblue}\circ$};
\end{tikzpicture}
}$ and
$\mathord{
\begin{tikzpicture}[baseline = -1]
	\draw[-,thick,darkblue] (0.18,-.15) to (-0.18,.3);
	\draw[-,thick,darkblue] (-0.18,-.15) to (0.18,.3);
\end{tikzpicture}
}$
to the supernatural transformations
$x, c$ and $t$ which are
 defined on $M \in \ob \sO$ as follows:
\begin{itemize}
\item 
$x_M:U \otimes M \rightarrow U \otimes M$ 
is left multiplication by the tensor $\omega$ from (\ref{omegadef});
\item
$c_M: U \otimes M \rightarrow U \otimes M$
is left  multiplication by $\sqrt{-1} \,f' \otimes 1$
for $f'$ as in (\ref{oddz});
\item
$t_M:U \otimes U \otimes M \rightarrow U \otimes U \otimes M$ 
sends
$u \otimes v \otimes m \mapsto (-1)^{|u||v|} v \otimes u
\otimes m$.
\end{itemize}
\end{theorem}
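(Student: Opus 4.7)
The plan is to invoke the universal property of $\AHC$: since it is presented by generators (the object $1$ and the morphisms $\bullet$, $\circ$, and the crossing) subject to the explicit relations in its definition, it suffices to verify that $x$, $c$, $t$ are supernatural transformations of the indicated parities and that they satisfy each of those defining relations inside $\mathcal{E}nd(\sO)$.

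First I would confirm the parity and naturality of the three transformations. That $x_M$ is an even $\g$-supermodule endomorphism natural in $M$ is precisely the content of the preceding lemma, proved via the factorisation $\omega = \Omega'(f' \otimes 1)$ together with the observations that $f'$ supercommutes with every element of $U(\g)$ inside $U(\widehat{\g})$ and that $\Omega'$ supercommutes with the image of the coproduct. The same reasoning shows that left multiplication by $\sqrt{-1}\, f' \otimes 1$ defines an odd supernatural transformation $c : \sF \Rightarrow \sF$, since $f' \otimes 1$ supercommutes with the image of $\Delta$. The graded swap $t_M$ is the standard symmetric braiding on the category of $\g$-supermodules and is automatically an even supernatural isomorphism.

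Next I would verify each defining relation of $\AHC$ in turn. The Clifford-type relations $\circ^2 = \id$ and $\circ\, \bullet = -\bullet\, \circ$ reduce to two short calculations in $U(\widehat{\g})$: first, $(f')^2$ acts as $-\id_U$ on $U$ (immediate from \eqref{f2}), so $c_M^2 = (\sqrt{-1})^2 (f')^2|_U = \id$; second, the super-anticommutator $\{f' \otimes 1,\, \omega\}$ vanishes, which in turn reduces to the identities $\{f', f_{r,s}\} = 0$ and $[f', f'_{r,s}] = 0$ in $U(\widehat{\g})$, both checked by expansion in matrix units using \eqref{f1}--\eqref{f4}. The involutive relation $t^2 = \id$ and the triple-crossing (braid) relation $t_1 t_2 t_1 = t_2 t_1 t_2$ are automatic from the symmetric monoidal structure on $\g$-supermodules. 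The relation prescribing how $t$ moves $\circ$ from one strand to the other is verified by a direct computation with the super-swap, with the signs $(-1)^{|u||v|}$ arising on both sides by exactly the same mechanism, once one carefully tracks the whiskering of the odd natural transformation $c$ through the super interchange law.

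The main obstacle is the quadratic affine-Hecke-Clifford relation equating the supercommutator of a dot with a crossing to $\id - \circ \otimes \circ$. Unwinding the definitions, this becomes an identity of $\K$-linear operators on $U \otimes U \otimes M$ comparing $t_M \circ x_{\sF M} - \sF(x_M) \circ t_M$ with $\id - (c_{\sF M} \circ \sF(c_M))$ (after carefully accounting for the super interchange signs attached to whiskerings of odd natural transformations). The computation reduces to an identity in the image of $U(\widehat{\g})^{\otimes 2} \otimes U(\g)$ acting on $U \otimes U \otimes M$, which originates from the $r$-matrix structure underlying the construction of $\omega$ and can be checked by expanding both sides in matrix units; the most delicate bookkeeping concerns how the factor $f'$ redistributes between the two $U$-tensorands under the swap. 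This is essentially the calculation carried out in the proof of \cite[Theorem 7.4.1]{HKS}, which I would adapt to our setting with care taken to track the signs from our Clifford generators. Once all relations are verified, the universal property of $\AHC$ produces the desired strict monoidal superfunctor $\Psi$.
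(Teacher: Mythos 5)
Your proposal is correct and matches the paper's approach: the paper's proof is a single sentence deferring the elementary check of relations to \cite[Theorem 7.4.1]{HKS}, and you have simply spelled out how each defining relation of $\AHC$ reduces to a computation (the Clifford and braid relations being direct, the quadratic dot-crossing relation being the substantial one, which you also attribute to HKS). One small imprecision: the identities $\{f', f_{r,s}\}=0$ and $[f',f'_{r,s}]=0$ do not hold abstractly in $U(\widehat{\g})$ (e.g.\ the supercommutator $[f',f_{r,s}]$ equals $-2e'_{r,s}$), but they do hold as operators on the natural supermodule $U$, which is all that is needed since the tensor $\omega$ only acts through the $U$ factor in its first slot — your subsequent appeal to (\ref{f1})--(\ref{f4}) makes clear this is what you intended.
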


\begin{proof}
This an elementary check of relations, similar to the one made in the
proof of
\cite[Theorem 7.4.1]{HKS}.
\end{proof}

\begin{definition}\label{qhcdef} The \textit{quiver Hecke-Clifford supercategory}
of type $\mathfrak{sl}_\infty$ is the monoidal supercategory 
$\QHC$ with 
objects generated by the set $J$ from (\ref{jdef}), even generating morphisms 
$\mathord{
\begin{tikzpicture}[baseline = -2]
	\draw[-,thick,darkgreen] (0.08,-.15) to (0.08,.3);
      \node at (0.08,0.05) {$\color{darkgreen}\bullet$};
   \node at (0.08,-.25) {$\scriptstyle{j_1}$};
\end{tikzpicture}
}:j_1 \rightarrow j_1$ and
$\mathord{
\begin{tikzpicture}[baseline = -2]
	\draw[-,thick,darkgreen] (0.18,-.15) to (-0.18,.3);
	\draw[-,thick,darkgreen] (-0.18,-.15) to (0.18,.3);
   \node at (-0.18,-.25) {$\scriptstyle{j_2}$};
   \node at (0.18,-.25) {$\scriptstyle{j_1}$};
\end{tikzpicture}
}:j_2 \otimes j_1 \rightarrow j_1 \otimes j_2$,
and odd generating morphisms
$\mathord{
\begin{tikzpicture}[baseline = -2]
	\draw[-,thick,darkgreen] (0.08,-.15) to (0.08,.3);
      \node at (0.08,0.05) {$\color{darkgreen}\circ$};
   \node at (0.08,-.25) {$\scriptstyle{j_1}$};
\end{tikzpicture}
}:j_1 \rightarrow -j_1$,
for all $j_1,j_2 \in J$. These are subject to the following relations:
\begin{align*}
%cy = - yc
\mathord{
\begin{tikzpicture}[baseline = 3]
	\draw[-, thick,darkgreen] (0, 0.6) to (0, -0.2);
	\node at (0, 0.05) {$\color{darkgreen} \bullet$};
	\node at (0, 0.35) {$\color{darkgreen} \circ$};
	\node at (-0.01,-.35) {$\scriptstyle{j_1}$};
\end{tikzpicture}
}
&=
- 
\mathord{
\begin{tikzpicture}[baseline = 3]
	\draw[-, thick,darkgreen] (0, 0.6) to (0, -0.2);
	\node at (0, 0.05) {$\color{darkgreen} \circ$};
	\node at (0, 0.35) {$\color{darkgreen} \bullet$};
	\node at (-0.01,-.35) {$\scriptstyle{j_1}$};
\end{tikzpicture}
},
% c^2 = 1
\qquad
\mathord{
\begin{tikzpicture}[baseline = 3]
	\draw[-, thick,darkgreen] (0, 0.6) to (0, -0.2);
	\node at (0, 0.05) {$\color{darkgreen} \circ$};
	\node at (0, 0.35) {$\color{darkgreen} \circ$};
	\node at (-0.01,-.35) {$\scriptstyle{j_1}$};
\end{tikzpicture}
}
=
\mathord{
\begin{tikzpicture}[baseline = 3]
	\draw[-, thick,darkgreen] (0, 0.6) to (0, -0.2);
	\node at (-0.01,-.35) {$\scriptstyle{j_1}$};
\end{tikzpicture}
},
\qquad
\mathord{
\begin{tikzpicture}[baseline = 4]
	\draw[-,thick,darkgreen] (0.25,.6) to (-0.25,-.2);
	\draw[-,thick,darkgreen] (0.25,-.2) to (-0.25,.6);
      \node at (-0.13,-0.02) {$\color{darkgreen}\circ$};
        \node at (-0.25,-.28) {$\scriptstyle{j_2}$};
   \node at (0.25,-.28) {$\scriptstyle{j_1}$};
\end{tikzpicture}
}
=
\mathord{
\begin{tikzpicture}[baseline = 4]
	\draw[-,thick,darkgreen] (0.25,.6) to (-0.25,-.2);
	\draw[-,thick,darkgreen] (0.25,-.2) to (-0.25,.6);
      \node at (0.14,0.42) {$\color{darkgreen}\circ$};
        \node at (-0.25,-.28) {$\scriptstyle{j_2}$};
   \node at (0.25,-.28) {$\scriptstyle{j_1}$};
\end{tikzpicture}
},
\qquad
\mathord{
\begin{tikzpicture}[baseline = 4]
	\draw[-,thick,darkgreen] (0.25,.6) to (-0.25,-.2);
	\draw[-,thick,darkgreen] (0.25,-.2) to (-0.25,.6);
      \node at (-0.13,0.42) {$\color{darkgreen}\circ$};
        \node at (-0.25,-.28) {$\scriptstyle{j_2}$};
   \node at (0.25,-.28) {$\scriptstyle{j_1}$};
\end{tikzpicture}
} = 
\mathord{
\begin{tikzpicture}[baseline = 4]
	\draw[-,thick,darkgreen] (0.25,.6) to (-0.25,-.2);
	\draw[-,thick,darkgreen] (0.25,-.2) to (-0.25,.6);
      \node at (0.14,-0.02) {$\color{darkgreen}\circ$};
        \node at (-0.25,-.28) {$\scriptstyle{j_2}$};
   \node at (0.25,-.28) {$\scriptstyle{j_1}$};
\end{tikzpicture}
},\end{align*}\begin{align*}
\mathord{
\begin{tikzpicture}[baseline = 1]
	\draw[-,thick,darkgreen] (0.25,.6) to (-0.25,-.2);
	\draw[-,thick,darkgreen] (0.25,-.2) to (-0.25,.6);
  \node at (-0.25,-.28) {$\scriptstyle{j_2}$};
   \node at (0.25,-.28) {$\scriptstyle{j_1}$};
      \node at (-0.13,-0.02) {$\color{darkgreen}\bullet$};
\end{tikzpicture}
}
-\mathord{
\begin{tikzpicture}[baseline = 1]
	\draw[-,thick,darkgreen] (0.25,.6) to (-0.25,-.2);
	\draw[-,thick,darkgreen] (0.25,-.2) to (-0.25,.6);
  \node at (-0.25,-.28) {$\scriptstyle{j_2}$};
   \node at (0.25,-.28) {$\scriptstyle{j_1}$};
      \node at (0.14,0.42) {$\color{darkgreen}\bullet$};
\end{tikzpicture}
}
&=
\left\{
\begin{array}{cl}
\mathord{
\begin{tikzpicture}[baseline = -5]
 	\draw[-,thick,darkgreen] (0.08,-.3) to (0.08,.4);
	\draw[-,thick,darkgreen] (-0.28,-.3) to (-0.28,.4);
   \node at (-0.28,-.4) {$\scriptstyle{j_2}$};
   \node at (0.08,-.4) {$\scriptstyle{j_1}$};  
   \end{tikzpicture} 
   }
   & \text{if $j_1 = j_2$,} \\
   \mathord{
\begin{tikzpicture}[baseline = -2]
 	\draw[-,thick,darkgreen] (0.08,-.3) to (0.08,.4);
	\draw[-,thick,darkgreen] (-0.28,-.3) to (-0.28,.4);
	\node at (-0.28, 0.06){$\color{darkgreen}\circ$};
	\node at (0.08, 0.06){$\color{darkgreen}\circ$};
   \node at (-0.28,-.4) {$\scriptstyle{j_2}$};
   \node at (0.08,-.4) {$\scriptstyle{j_1}$};
\end{tikzpicture}
 }
	& \text{if $j_1= - j_2$,} \\
	0 & \text{otherwise;}
\end{array}
\right.\end{align*}\begin{align*}
%
%y and tau
%
\mathord{
\begin{tikzpicture}[baseline = 1]
	\draw[-,thick,darkgreen] (0.25,.6) to (-0.25,-.2);
	\draw[-,thick,darkgreen] (0.25,-.2) to (-0.25,.6);
  \node at (-0.25,-.28) {$\scriptstyle{j_2}$};
   \node at (0.25,-.28) {$\scriptstyle{j_1}$};
      \node at (-0.13,0.42) {$\color{darkgreen}\bullet$};
\end{tikzpicture}
}
-
\mathord{
\begin{tikzpicture}[baseline = 1]
	\draw[-,thick,darkgreen] (0.25,.6) to (-0.25,-.2);
	\draw[-,thick,darkgreen] (0.25,-.2) to (-0.25,.6);
  \node at (-0.25,-.28) {$\scriptstyle{j_2}$};
   \node at (0.25,-.28) {$\scriptstyle{j_1}$};
      \node at (0.15,-0.02) {$\color{darkgreen}\bullet$};
\end{tikzpicture}
}
&=
\left\{
\begin{array}{cl}
\mathord{
\begin{tikzpicture}[baseline = -5]
 	\draw[-,thick,darkgreen] (0.08,-.3) to (0.08,.4);
	\draw[-,thick,darkgreen] (-0.28,-.3) to (-0.28,.4);
   \node at (-0.28,-.4) {$\scriptstyle{j_2}$};
   \node at (0.08,-.4) {$\scriptstyle{j_1}$};  
   \end{tikzpicture} 
   }
   & \text{if $j_1 = j_2$,} \\
   \!\!\!\!-\mathord{
\begin{tikzpicture}[baseline = -2]
 	\draw[-,thick,darkgreen] (0.08,-.3) to (0.08,.4);
	\draw[-,thick,darkgreen] (-0.28,-.3) to (-0.28,.4);
	\node at (-0.28, 0.06){$\color{darkgreen}\circ$};
	\node at (0.08, 0.06){$\color{darkgreen}\circ$};
   \node at (-0.28,-.4) {$\scriptstyle{j_2}$};
   \node at (0.08,-.4) {$\scriptstyle{j_1}$};
\end{tikzpicture}
 }
	& \text{if $j_1= - j_2$,} \\
	0 & \text{otherwise;}
\end{array}
\right.\end{align*}\begin{align*}
%tau ^2 
\mathord{
\begin{tikzpicture}[baseline = 7]
	\draw[-,thick,darkgreen] (0.28,.4) to[out=90,in=-90] (-0.28,1.1);
	\draw[-,thick,darkgreen] (-0.28,.4) to[out=90,in=-90] (0.28,1.1);
	\draw[-,thick,darkgreen] (0.28,-.3) to[out=90,in=-90] (-0.28,.4);
	\draw[-,thick,darkgreen] (-0.28,-.3) to[out=90,in=-90] (0.28,.4);
  \node at (-0.28,-.4) {$\scriptstyle{j_2}$};
  \node at (0.28,-.4) {$\scriptstyle{j_1}$};
\end{tikzpicture}
}
&=
\left\{
\begin{array}{ll}
\:\:\:\:0&\text{if $i_1 = i_2$,}\\
\mathord{
\kappa_1 (i_1-i_2)
\begin{tikzpicture}[baseline = -1]
	\draw[-,thick,darkgreen] (0.08,-.3) to (0.08,.4);
	\draw[-,thick,darkgreen] (-0.28,-.3) to (-0.28,.4);
   \node at (-0.28,-.4) {$\scriptstyle{j_2}$};
   \node at (0.08,-.4) {$\scriptstyle{j_1}$};
     \node at (0.08,0.05) {$\color{darkgreen}\bullet$};
\end{tikzpicture}
}
+
\kappa_2(i_2-i_1)
\mathord{
\begin{tikzpicture}[baseline = -1]
	\draw[-,thick,darkgreen] (0.08,-.3) to (0.08,.4);
	\draw[-,thick,darkgreen] (-0.28,-.3) to (-0.28,.4);
   \node at (-0.28,-.4) {$\scriptstyle{j_2}$};
   \node at (0.08,-.4) {$\scriptstyle{j_1}$};
      \node at (-0.28,0.05) {$\color{darkgreen}\bullet$};
\end{tikzpicture}
}
&\text{if $|i_1 - i_2| = 1$,}\\
\mathord{
\begin{tikzpicture}[baseline = 0]
	\draw[-,thick,darkgreen] (0.08,-.3) to (0.08,.4);
	\draw[-,thick,darkgreen] (-0.28,-.3) to (-0.28,.4);
   \node at (-0.28,-.4) {$\scriptstyle{j_2}$};
   \node at (0.08,-.4) {$\scriptstyle{j_1}$};
\end{tikzpicture}
}&\text{if $|i_1 - i_2| > 1$;}\\
\end{array}
\right. 
\end{align*}\begin{align*}
%braid relation
\mathord{
\begin{tikzpicture}[baseline = 1]
	\draw[-,thick,darkgreen] (0.45,.8) to (-0.45,-.4);
	\draw[-,thick,darkgreen] (0.45,-.4) to (-0.45,.8);
        \draw[-,thick,darkgreen] (0,-.4) to[out=90,in=-90] (-.45,0.2);
        \draw[-,thick,darkgreen] (-0.45,0.2) to[out=90,in=-90] (0,0.8);
   \node at (-0.45,-.5) {$\scriptstyle{j_3}$};
   \node at (0,-.5) {$\scriptstyle{j_2}$};
  \node at (0.45,-.5) {$\scriptstyle{j_1}$};
\end{tikzpicture}
}
-
\mathord{
\begin{tikzpicture}[baseline = 1]
	\draw[-,thick,darkgreen] (0.45,.8) to (-0.45,-.4);
	\draw[-,thick,darkgreen] (0.45,-.4) to (-0.45,.8);
        \draw[-,thick,darkgreen] (0,-.4) to[out=90,in=-90] (.45,0.2);
        \draw[-,thick,darkgreen] (0.45,0.2) to[out=90,in=-90] (0,0.8);
   \node at (-0.45,-.5) {$\scriptstyle{j_3}$};
   \node at (0,-.5) {$\scriptstyle{j_2}$};
  \node at (0.45,-.5) {$\scriptstyle{j_1}$};
\end{tikzpicture}
}
\!&=
\left\{
\begin{array}{ll}
\kappa_1(i_1 - i_2) 
\mathord{
\begin{tikzpicture}[baseline = -1]
	\draw[-,thick,darkgreen] (0.44,-.3) to (0.44,.4);
	\draw[-,thick,darkgreen] (0.08,-.3) to (0.08,.4);
	\draw[-,thick,darkgreen] (-0.28,-.3) to (-0.28,.4);
   \node at (-0.28,-.4) {$\scriptstyle{j_3}$};
   \node at (0.08,-.4) {$\scriptstyle{j_2}$};
   \node at (0.44,-.4) {$\scriptstyle{j_1}$};
\end{tikzpicture}
}
&
\text{if $j_1 = j_3$ and $|i_1 - i_2|= 1$,}\\
\kappa_1 (i_2 - i_1) 
\mathord{
\begin{tikzpicture}[baseline =-1]
	\draw[-,thick,darkgreen] (0.44,-.3) to (0.44,.4);
	\draw[-,thick,darkgreen] (0.08,-.3) to (0.08,.4);
	\draw[-,thick,darkgreen] (-0.28,-.3) to (-0.28,.4);
	\node at (-0.28, 0.06) {$\color{darkgreen} \circ$};
	\node at (0.44, 0.06) {$\color{darkgreen} \circ$};
   \node at (-0.28,-.4) {$\scriptstyle{j_3}$};
   \node at (0.08,-.4) {$\scriptstyle{j_2}$};
   \node at (0.44,-.4) {$\scriptstyle{j_1}$};
\end{tikzpicture}
}
&\text{if $j_1=-j_3$ and $|i_1 - i_2| = 1$,}
\\
\:\:\:\:0 &\text{otherwise.}
\end{array}
\right.
\end{align*}
In the above, we have adopted the convention given $j_r \in J$ 
that $i_r \in I$
and $\kappa_r \in \{\pm 1\}$ are defined from
$j_r = \kappa_r \sqrt{z+i_r}\sqrt{z + i_r +1}$.
Identifying the word $\bj = j_d\cdots j_1 \in J^d$ with
$j_d \otimes\cdots\otimes j_1 \in \ob \QHC$, 
the {\em quiver Hecke-Clifford superalgebra} is the locally
unital algebra
\begin{equation}\label{train2}
QHC_d :=
\bigoplus_{\bj,\bj' \in J^d}
\Hom_{\QHC}(\bj,\bj').
\end{equation}
This is exactly as in \cite[Definition 3.5]{KKT} in the special case of the
$\mathfrak{sl}_\infty$-quiver.
\end{definition}

Now we 
are going to exploit 
a remarkable isomorphism between certain completions $\widehat{AHC}_d$ and
$\widehat{QHC}_d$ of the superalgebras $AHC_d$ and $QHC_d$ 
from (\ref{train1}) and (\ref{train2}), which was
constructed in \cite{KKT}.
To define these, we need some
further notation. 

Numbering 
strands of a diagram by $1,\dots,d$ from right to left,
$AHC_d$ is generated by its elements
$x_r, c_r\:(1 \leq r \leq d)$ and $t_r\:(1 \leq r < d)$
corresponding
to the closed dot on the $r$th strand,
the open dot on the $r$th strand, and the crossing of the $r$th and
$(r+1)$th strands, respectively.
Let $HC_d := S_d \ltimes C_d$ be the 
{\em Sergeev superalgebra}, that is, the smash product
of the symmetric group $S_d$ with basic transpositions
$t_1,\dots,t_{d-1}$
acting on the Clifford superalgebra $C_d$ on generators $c_1,\dots,c_d$.
Let $A_d$ denote
the 
purely even polynomial superalgebra 
$\K[x_1,\dots,x_d]$.
By the basis theorem for $AHC_d$ established in \cite[$\S$2-k]{BK}, 
the natural multiplication map gives a superspace isomorphism
$HC_d \otimes A_d \stackrel{\sim}{\rightarrow}
AHC_d$.
Transporting the multiplication on $AHC_d$ to $HC_d\otimes A_d$ via this
isomorphism, 
the following describe how to commute a polynomial
$f \in A_d$ past the
generators of $HC_d$:
\begin{align}
  (1 \otimes f) (c_r \otimes 1) &= c_r \otimes c_r(f),\\
 (1 \otimes f) (t_r \otimes 1) &= t_r \otimes t_r(f) + 1 \otimes \partial_r(f)
+c_r c_{r+1} \otimes \tilde\partial_r(f),
\end{align}
for operators $c_r, t_r, \partial_r,\tilde\partial_r:A_d \rightarrow
A_d$ 
such that
\begin{itemize}
\item $t_r$ is the automorphism that interchanges $x_r$ and
  $x_{r+1}$ and
fixes all other generators;
\item
$c_r$ is the automorphism that sends $x_r \mapsto -x_r$ and
fixes all other generators;
\item
$\partial_r$ is the Demazure operator $\partial_r(f) :=
\frac{t_r(f)-f}{x_r - x_{r+1}};
$
\item
$\tilde{\partial}_r$ is the {twisted Demazure operator} $c_{r+1} \circ \partial_r \circ c_{r}$,
so
$\tilde\partial_r(f) =
\frac{t_r(f) - c_{r+1}(c_r(f))}{x_r+x_{r+1}}.$
\end{itemize}
 Given a tuple $\mu = (\mu_i)_{i \in I}$ of non-negative integers all
but finitely many of which are zero, 
the quotient superalgebra
\begin{equation}
AHC_d(\mu) := AHC_d \Big/ \Big\langle
\prod_{i \in I} \left(x_1^2 - (z+i)(z+i+1)\right)^{\mu_i}
\Big\rangle
\end{equation}
is a (degenerate) {\em cyclotomic Hecke-Clifford superalgebra} in the
sense of
\cite[$\S$3.e]{BK}.
It is finite dimensional. Moreover, all roots of the minimal polynomials
of
all $x_r \in AHC_d(\mu)$ belong to the set $J$.
It follows for each $\bj = j_d \cdots j_1$ in the set $J^d$ of words
of length $d$ in letters $J$
that there is
an idempotent
$1_\bj \in AHC_d(\mu)$ defined by the projection onto the simultaneous
generalized eigenspaces for $x_1,\dots,x_d$ with eigenvalues
$j_1,\dots,j_d$, respectively.
Moreover, we have that
$$
AHC_d(\mu) = \bigoplus_{\bj,\bj' \in J^d} 1_{\bj'} AHC_d(\mu) 1_\bj.
$$
If $\mu \leq \mu'$, i.e. $\mu_i \leq \mu_i'$ for all $i$,
there is a canonical surjection 
$AHC_d(\mu') \twoheadrightarrow AHC_d(\mu)$
sending $x_r,c_r,t_r,1_\bj \in AHC_d(\mu')$ to the elements of
$AHC_d(\mu)$ with the same names.
Let
\begin{equation}\label{phew}
\widehat{AHC}_d := \varprojlim_{\mu} AHC_d(\mu)
\end{equation}
be the inverse limit of this system of superalgebras
taken in the category of locally
unital superalgebras with distinguished idempotents indexed by $J^d$.
Using the basis theorem for the cyclotomic quotients $AHC_d(\mu)$
from \cite[$\S$3-e]{BK}, one can identify $\widehat{AHC}_d$ 
with the completion defined in \cite[Definition
5.3]{KKT}\footnote{Note there is a sign error in \cite[(5.5)]{KKT}:
it should read $-C_a C_{a+1}\dots$.}.
In particular, letting
$$
\widehat{A}_d := \bigoplus_{\bj \in J^d} 
\K[[x_1-j_1,\dots,x_d-j_d]]1_\bj,
$$
there is a superspace isomorphism
$HC_d \otimes \widehat{A}_d \stackrel{\sim}{\rightarrow}
\widehat{AHC}_d$
induced by the obvious multiplication maps
$HC_d \otimes \widehat{A}_d \twoheadrightarrow AHC_d(\mu)$
for all $\mu$.
The multiplication on
$HC_d\otimes \widehat{A}_d$
corresponding to the one on $\widehat{AHC}_d$ via this isomorphism
has 
the following properties for all
$f \in \widehat{A}_d$:
\begin{align}
 (1 \otimes f 1_{\bj}) (c_r \otimes 1_{\bj'})
&= c_r \otimes c_r(f)  1_{c_r(\bj)} 1_{\bj'},\\\notag
(1 \otimes f 1_{\bj}) (t_r \otimes 1_{\bj'}) &=  t_r \otimes t_r(f) 1_{t_r(\bj)}1_\bj' 
+ 1 \otimes \frac{t_r(f) 1_{t_r(\bj)} 
- f 1_\bj}{x_r-x_{r+1}}1_{\bj'} \\
&+c_r c_{r+1} \otimes \frac{
t_r(f)  1_{t_r(\bj)}- c_{r+1}(c_{r}(f))  1_{c_{r+1}(c_r(\bj))}}{x_r + x_{r+1}} 1_{\bj'}.
\label{fr}\end{align}
The fractions on the right hand side of (\ref{fr})
make sense:
in the first,
$(x_r - x_{r+1}) 1_{\bj'}$ is
invertible unless $j'_r = j'_{r+1}$, 
in which case the expression 
equals $\partial_r(f) 1_\bj 1_{\bj'}$;
the second is fine when $j'_r \neq -j'_{r+1}$ as then $(x_r+x_{r+1}) 1_{\bj'}$ is
invertible, while if $j'_r = -j'_{r+1}$ it equals $\tilde\partial_r(f) 1_{t_r(\bj)}1_{\bj'}$.

Similarly, there is a completion $\widehat{QHC}_d$ of $QHC_d$.
To introduce this,
we denote the elements
of $QHC_d 1_\bj$ defined by an open dot on the $r$th strand,
a closed dot on the $r$th strand and a crossing of the $r$th and
$(r+1)$th strands by
$\gamma_r 1_\bj, \xi_r 1_\bj$ and $\tau_r 1_\bj$, respectively.
For $\mu = (\mu_i)_{i \in I}$ as above, we define the
{\em cyclotomic quiver Hecke-Clifford superalgebra}
\begin{equation}
QHC_d(\mu) := QHC_d \Big/ \left\langle
\xi_1^{2\mu_{i}}1_\bj \:\Big|\:\bj \in  J^d
, i \in I
\text{ with }j_1^2 = (z+i)(z+i+1)
\right\rangle.
\end{equation}
Using the relations, it is easy to see that the images of all $\xi_r 1_\bj$ are nilpotent
in $QHC_d(\mu)$.
Then we set
\begin{equation}
\widehat{QHC}_d := \varprojlim_\mu QHC_d(\mu),
\end{equation}
taking the inverse limit once again in the category of locally unital
superalgebras with distinguished idempotents indexed by $J^d$.
The obvious locally unital homomorphisms
$QHC_d
\otimes_{\K[\xi_1,\dots,\xi_d]} \K[[\xi_1,\dots,\xi_d]]
\twoheadrightarrow QHC_d(\mu)$
for each $\mu$ induce a surjective homomorphism
$$
QHC_d
\otimes_{\K[\xi_1,\dots,\xi_d]} \K[[\xi_1,\dots,\xi_d]]
\rightarrow
\widehat{QHC}_d.
$$
This map is actually an isomorphism, as may be deduced using the basis
theorem for $QHC_d$ from \cite[Corollary 3.9]{KKT}
plus the observation that the image of any non-zero element $u \in QHC_d$ is
non-zero in $QHC_d(\mu)$ for sufficiently large $\mu$; the latter
assertion follows by elementary considerations involving the natural
$\Z$-grading on $QHC_d$.
Consequently, $\widehat{QHC}_d$ is
isomorphic to the completion introduced in a slightly different way in
\cite[Definition 3.16]{KKT}.
Moreover, 
there is a locally unital embedding
$QHC_d \hookrightarrow \widehat{QHC}_d$. 

At last, we are ready to state 
the crucial theorem from \cite{KKT}.
We need this only 
 in the special situation of
\cite[$\S$5.2(i)(a)]{KKT}, but emphasize that
the results obtained in \cite{KKT} are substantially more
general.
In particular, for us, all elements of the set $I$ are
even in the sense of \cite[$\S$3.5]{KKT}, so that we do not
need the more general quiver Hecke {\em super}algebras of \cite{KKT}.

\begin{theorem}\label{kkttheorem}
There is a superalgebra isomorphism
$\widehat{QHC}_d \stackrel{\sim}{\rightarrow} \widehat{AHC}_d$
such that 
$$
1_\bj \mapsto 1_\bj,\quad
\gamma_r 1_\bj \mapsto c_r 1_\bj,\quad
\xi_r 1_\bj  \mapsto y_r 1_\bj,\quad
\tau_r 1_\bj  \mapsto t_r g_r 1_\bj
+ f_r 1_\bj
+
c_r c_{r+1} \tilde f_r 1_\bj,
$$
for all $\bj \in J^d$
and 
$r$.
Here, $y_r \in \K[[x_r-j_r]]$ and $g_r, f_r, \tilde f_r \in \K[[x_r-j_r,x_{r+1}-j_{r+1}]]$ are
the power series determined uniquely by the following:
\begin{align*}
j_r &= \kappa_r \sqrt{z+i_r}\sqrt{z+i_r +1}\text{ for $i_r \in I$ and $\kappa_r \in \{\pm\}$},\\
%h_r &= z+i_r + \half,\\
%y_r &=\kappa_r \left(\sqrt{h_r^2+2j_r(x_r-j_r)+(x_r-j_r)^2} - h_r\right) \in (x_r-j_r),\\
y_r &=\kappa_r \left(\sqrt{x_r^2+{\textstyle\frac{1}{4}}} - \left(z+i_r+\half\right)\right) \in (x_r-j_r),\\
p_r &= \frac{(x_r^2-x_{r+1}^2)^2}{2 (x_r^2+x_{r+1}^2)-(x_r^2-x_{r+1}^2)^2},\\
g_r &=
\left\{
\begin{array}{ll}
-1&\text{if $i_r < i_{r+1}$,}\\
p_r
\left(\kappa_r y_r - \kappa_{r+1} y_{r+1}\right)
&\text{if $i_r = i_{r+1}+1$,}\\
p_r&\text{if $i_r > i_{r+1}+1$,}\\
\frac{\sqrt{p_r}}{y_r-y_{r+1}}
\in \frac{x_r-x_{r+1}}{y_r-y_{r+1}} + (x_r-x_{r+1})
&\text{if $j_r = j_{r+1}$,}\\
\frac{\sqrt{p_r}}{y_r+y_{r+1}}
\in \frac{x_r+x_{r+1}}{y_r+y_{r+1}} + (x_r+x_{r+1})
&\text{if $j_r = -j_{r+1}$;}\\
\end{array}\right.\\
f_r &= \frac{g_r}{x_r-x_{r+1}} -
  \frac{\delta_{j_r,j_{r+1}}}{y_r-y_{r+1}},\qquad
\tilde f_r = \frac{g_r}{x_r+x_{r+1}} -
  \frac{\delta_{j_r,-j_{r+1}}}{y_r+y_{r+1}}.
\end{align*}
(All of this notation depends implicitly on $\bj$.)
\end{theorem}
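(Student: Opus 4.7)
The plan is to construct the map explicitly on generators, verify that it respects all of the defining relations of $\widehat{QHC}_d$, and then exhibit an inverse. The overall strategy mirrors \cite{KKT}, specialized to the $\mathfrak{sl}_\infty$-quiver where every vertex is even (so all Clifford signs in the general KKT setup collapse).

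First I would show that each of the power series $y_r, g_r, f_r, \tilde f_r$ is a well-defined element of $\widehat A_d$. For $y_r$: squaring the formula gives $(y_r + \kappa_r(z+i_r+\tfrac12))^2 = x_r^2 + \tfrac14$, which has a unique solution in $\K[[x_r - j_r]]$ with $y_r \in (x_r - j_r)$ (the sign of the square root is pinned down by $\kappa_r$ together with the identity $j_r^2 + \tfrac14 = (z+i_r+\tfrac12)^2$). Crucially, $y_r = u_r (x_r - j_r)$ for a unit $u_r$, so $y_r$ is itself a local parameter at $j_r$. For $g_r$ in the cases $j_r = \pm j_{r+1}$, one verifies that $p_r$ is a unit multiple of $(x_r \mp x_{r+1})^2$ in the relevant completion, so $\sqrt{p_r}$ makes sense and the division by $y_r \mp y_{r+1}$ is legitimate (both numerator and denominator vanish to the same order). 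For $f_r$ and $\tilde f_r$, the apparent poles cancel because the subtracted terms $\delta_{j_r,\pm j_{r+1}}/(y_r \mp y_{r+1})$ remove the singular part of $g_r/(x_r \mp x_{r+1})$.

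Next I would verify the relations of $\widehat{QHC}_d$. The Clifford-type relations ($\gamma_r^2 = 1$, anticommutation of different $\gamma$'s) are immediate from the $c_r$ relations in $\widehat{AHC}_d$. The relations between $\xi_r$ and the Cliffords reduce to the fact that $c_r(y_r) = -y_r$ (since $y_r$ is an odd power series in $x_r$, as follows from squaring). The core of the work lies in the crossing relations. The quadratic relation $\tau_r^2 1_{\bj}$ with its case-by-case right-hand side and the dot-through-crossing relations follow by direct computation in $\widehat{AHC}_d$ using the commutation formula \eqref{fr}: one expands $(t_r g_r + f_r + c_r c_{r+1} \tilde f_r)^2 1_{\bj}$, replaces $t_r^2 = 1$ and uses the various Demazure operators, and the chosen formulas for $g_r, f_r, \tilde f_r$ are exactly engineered to yield the desired right-hand sides. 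The braid relation is the hardest and longest computation, requiring one to verify an identity of $t_r$-twisted rational expressions in $(x_r, x_{r+1}, x_{r+2})$ modulo the appropriate completions; this reduces, after extensive but routine rewriting, to an identity in the purely commutative ring $\widehat A_d[t_1,\dots,t_{d-1}]/(\text{Clifford})$ that follows from the explicit forms of the $g_r$'s.

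Finally, to exhibit an inverse I would use the fact that $y_r \in (x_r-j_r)$ begins with a nonzero linear term, so the assignment $x_r - j_r \mapsto y_r$ defines an automorphism of $\bigoplus_{\bj} \K[[x_r-j_r]] 1_\bj$; inverting it expresses $x_r$ as a power series in the $y_s$'s, and reading off the formulas gives a candidate inverse on generators. Verifying it is a two-sided inverse on the superalgebras reduces to matching generators and using the uniqueness of the presentation. The main obstacle throughout is the braid relation: checking it requires carefully tracking the power-series expansions of $g_r, g_{r+1}$ and of $f_r, \tilde f_r$ across the three-strand crossings, and splitting into the many subcases dictated by the relative positions of $i_r, i_{r+1}, i_{r+2}$ in $I$. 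Everything else is bookkeeping, whereas this calculation is where the specific shape of the formulas (in particular the choice of $p_r$ and the appearance of $\sqrt{p_r}$) is forced.
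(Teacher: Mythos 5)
The paper's proof of Theorem~\ref{kkttheorem} is essentially a one-paragraph citation: it is declared to be a special case of Theorem~5.4 of \cite{KKT} (here in the situation of \cite[\S 5.2(i)(a)]{KKT}, with all vertices of the quiver even), and the remainder of the proof is devoted to translating between the two notational conventions --- observing that KKT's index set is $\tilde I = \{j^2 : j \in J\}$ rather than the paper's $I$, and recording the specific choices of the auxiliary functions $\eps$, $h$, and $G$ from \cite[(5.7), (5.11)]{KKT} that produce the explicit power series $y_r$, $g_r$, $f_r$, $\tilde f_r$ displayed in the statement. Well-definedness of those power series is itself delegated to \cite[Lemma~5.5]{KKT}. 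Your proposal takes a genuinely different route: you propose to re-prove the KKT intertwiner isomorphism from scratch by checking all the defining relations of $\widehat{QHC}_d$ directly in $\widehat{AHC}_d$ and exhibiting an inverse. That is, in outline, exactly the strategy KKT themselves follow, and in principle it works; but as written your argument is a program, not a proof. In particular you explicitly defer the braid relation (``the hardest and longest computation'') and the quadratic crossing relation to ``direct computation,'' which is precisely the nontrivial content that the authors avoid reproducing by citing \cite{KKT}. The trade-off: your approach would make the argument self-contained, whereas the paper's keeps it short by outsourcing the heavy lifting; but without carrying out those verifications, your sketch does not yet constitute a proof.

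Two smaller remarks. First, the justification you give for the Clifford relation $\gamma_r \xi_r = -\xi_r \gamma_r$ --- ``$y_r$ is an odd power series in $x_r$'' --- is not correct as stated: $\sqrt{x_r^2 + \tfrac14}$ is even in $x_r$, so $y_r$ is not odd in $x_r$ in the naive sense. What actually makes the computation work is that applying $c_r$ simultaneously sends $x_r \mapsto -x_r$ \emph{and} moves the idempotent from $1_\bj$ to $1_{c_r(\bj)}$, which flips the sign $\kappa_r$; the two sign changes combine to give $c_r(y_r 1_\bj) = -y_r\, 1_{c_r(\bj)}$. Second, if you do choose the re-proof route rather than citing, you would still need to address why exactly the formulas in the statement arise --- they are not arbitrary but encode the specific normalizations $\eps(j) = (1-\kappa)/2$, $h(j^2) = z + i + \tfrac12$, and $G_{j_r,j_{r+1}} = -1$ for $i_r < i_{r+1}$ within the freedom that \cite{KKT} allow; a self-contained proof should make clear where these constraints on $y_r$ and $g_r$ come from and why they determine the power series uniquely (as the containments stated in the theorem do).
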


\begin{proof}
This is a special case of \cite[Theorem 5.4]{KKT}. 
To help the reader to translate between our notation and that of \cite{KKT}, we note that
the set $J$ in \cite{KKT} is the same as our set $J$, but the set $I$
there is $\tilde I := \{j^2\:|\:j \in J\}$, which is different from
our $I$.
We have 
made various other choices as stipulated in \cite{KKT} in order to produce
concrete formulae:
we have taken the functions $\eps:J \rightarrow \{0,1\}$ and $h:\tilde I
\rightarrow \K$ from \cite[(5.7)]{KKT} so that 
$\eps(j) = (1-\kappa)/2$ 
and $h(j^2) = z+i+\half$
for $J\ni j = \kappa \sqrt{z+i}\sqrt{z+i+1}$;
for \cite[(5.11)]{KKT} we took $G_{j_r, j_{r+1}}$ (our $g_r$) to be $-1$ when 
$i_r < i_{r+1}$.
The fact that $g_r, f_r$ and $\tilde f_r$ are all well-defined elements of
$\K[[x_r-j_r,x_{r+1}-j_{r+1}]]$
is justified by \cite[Lemma 5.5]{KKT}.
Note also that 
the ambiguous square roots appearing in the formulae for $y_r$ and
$g_r$ are uniquely
determined by the containments we have specified.
\end{proof}

\iffalse
Notes:
Our $J$ is the same as the one in \cite{KKT}, but $I$ in \cite{KKT}
is $\{j^2\:|\:j \in J\}$.
Also in \cite{KKT}, a key role is played by the algebra
$A = \K[x,\lambda] / (x^2-\lambda^2+\frac{1}{4})$.
I'm writing instead $y = \lambda - h$ and my $x$ is their $x-j$.
So $A = \K[x,y] / ((x+j)^2 - (y+h)^2 + \frac{1}{4})$.
Then they form the completion at $I = (x,y)$ (in my notation)
and observe this is naturally $\K[[x]]$ and $\K[[y]]$. The formula for
$y$ in terms of $x$ is the above.
\fi

\begin{proof}[Proof of Theorem~\ref{itlifts}]
For $\bi = i_d \cdots i_1 \in I^d$, let $F_\bi := F_{i_d} \cdots
F_{i_1}:\O\rightarrow\O$.
The usual vertical composition of natural transformations makes the
vector space
$$
NT_d := \bigoplus_{\bi,\bi' \in I^d} \Hom(F_\bi, F_{\bi'})
$$
into a locally unital algebra with distinguished
idempotents 
$\{1_\bi\:|\:\bi \in I^d\}$ arising from the identity endomorphisms of
each $F_\bi$.
Also horizontal composition of natural transformations defines
homomorphisms
$a_{d_2,d_1}:NT_{d_2} \otimes NT_{d_1} \rightarrow NT_{d_2+d_1}$
for all $d_1,d_2 \geq 0$.
Recalling (\ref{recla}), the data of a strict monoidal functor $\Phi:\QH \rightarrow
\mathcal{E}nd(\O)$ sending $i$ to $F_i$
is just the same as a family of 
locally unital algebra homomorphisms
$\Phi_d:
QH_d \rightarrow NT_d$
for all $d \geq 0$, such that 
$1_\bi \mapsto 1_\bi$ for each $\bi \in I^d$ and
\begin{equation}\label{analog}
a_{d_2,d_1} \circ \Phi_{d_2} \otimes \Phi_{d_1} = \Phi_{d_2+d_1} \circ
b_{d_2,d_1}
\end{equation}
for all $d_1,d_2 \geq 0$,
where $b_{d_2,d_1}:QH_{d_2} \otimes QH_{d_1} \rightarrow QH_{d_2+d_1}$
is the obvious embedding defined by horizontal concatenation of
diagrams.

To construct $\Phi_d$, we start from 
the monoidal superfunctor $\Psi$ from Theorem~\ref{hkst}.
This induces
superalgebra homomorphisms
$\Psi_d:AHC_d \rightarrow \End(\sF^d)$ 
for all $d \geq 0$,
where
$\End(\sF^d)$ denotes 
supernatural endomorphisms of $\sF^d:\sO \rightarrow \sO$.
For each $M \in \ob \sO$,
Corollary~\ref{minpoly} implies that
$\operatorname{ev}_M \circ \Psi_d : AHC_d \rightarrow \End_{\sO}(\sF^d \,M)$
factors through all sufficiently
large cyclotomic quotients
$AHC_d(\mu)$. 
Hence, $\Psi_d$ extends uniquely to a locally unital superalgebra homomorphism
$\widehat{\Psi}_d:\widehat{AHC}_d \rightarrow SNT_d$,
where
$$
SNT_d := \bigoplus_{\bj, \bj' \in J^d} \Hom(\sF_\bj, \sF_{\bj'}) \subset \End(\sF^d)
$$
and $\sF_{\bj} := \sF_{j_d}\cdots \sF_{j_1}$.
Composing $\widehat{\Psi}_d$ with the isomorphism from
Theorem~\ref{kkttheorem}
and the inclusion $QHC_d \hookrightarrow \widehat{QHC}_d$, we obtain a
locally unital superalgebra homomorphism
$\Theta_d:QHC_d \rightarrow SNT_d$.
It is obvious from Definitions~\ref{qhdef} and \ref{qhcdef}
that there is a locally unital algebra homomorphism
$\inc:QH_d \rightarrow (QHC_d)_\0$
sending the idempotent $1_\bi$ to $1_\bj$
for $\bj$ with $j_r := \sqrt{z+i_r}\sqrt{z+i_r+1}$,
and taking the elements of $QH_d 1_\bi$
defined by the dot on the $r$th strand and the crossing of the $r$th
and $(r+1)$th strands
to $\xi_r 1_\bj$ and $\tau_r 1_\bj$, respectively.
Also, recalling (\ref{them}), 
restriction from $\sO$ to $\O$
defines a homomorphism
$$
\pr:\displaystyle\bigoplus_{\bj, \bj' \in J_+^d} 1_{\bj'} (SNT_d)_\0 1_\bj
\rightarrow NT_d
$$
where $J_+ := \left\{\sqrt{z+i}\sqrt{z+i+1}\:\big|\:i \in I\right\} \subset J$.
Then the composition $\pr \circ \Theta_d \circ \inc$ gives us the
desired locally unital homomorphism $\Phi_d:QH_d \rightarrow NT_d
$
sending $1_\bi \mapsto 1_\bi$ for each $\bi \in I^d$.
It just remains to observe that the property (\ref{analog}) is
satisfied,
and that $\Phi_d(x_r 1_\bi)_M$ is nilpotent for each $r$,
$\bi \in I^d$
and $M \in \ob\O$.
These things follow from the explicit formulae in Theorems~\ref{hkst} and \ref{kkttheorem} plus
Corollary~\ref{minpoly} once again.
\end{proof}

\section{Proof of the Main Theorem}

Everything is now in place for us to be able to prove the Main Theorem
from the introduction. Recall $\bsigma = (\sigma_1,\dots,\sigma_n)$ is
a sign sequence, and $V^{\otimes \bsigma}$ denotes the
$\mathfrak{sl}_\infty$-module
$V^{\sigma_1}\otimes\cdots\otimes V^{\sigma_n}$.
The following is a special case of \cite[Definition 2.10]{BLW}, which reformulated
\cite[Definiton 3.2]{LW} for tensor products of minuscule
representations;
it may be helpful to recall Definitions~\ref{bruhatdef}, \ref{hwdef}
and \ref{qhdef}
at this point.

\begin{definition}\label{tpcdef}
An $\mathfrak{sl}_\infty$-{\em tensor product categorification} of
$V^{\otimes \bsigma}$
is the following data:
\begin{itemize}
\item
a highest weight category $\mathcal C$ 
with weight poset $(\B, \preceq)$;
\item
adjoint pairs $(F_i, E_i)$ of endofunctors of $\mathcal C$ for each $i
\in I$;
\item
a strict monoidal functor $\Phi:\mathcal{QH} \rightarrow
\mathcal{E}nd(\C)$ with $\Phi(i) = F_i$ for each $i \in I$.
%where $\mathcal{QH}$ is the quiver Hecke category associated to
%$\mathfrak{sl}_\infty$.
\end{itemize}
We impose the following additional axioms for all $i
\in I$, $\bb \in \B$ and $M \in \ob \C$:
\begin{itemize}
\item
$E_i$ is isomorphic to a left adjoint
of $F_i$;
\item
$F_i \Delta(\bb)$ 
has a $\Delta$-flag with sections 
$\{\Delta(\bb+\sigma_t \bd_t)\:|\: 1 \leq t \leq n,
\isig_t(\bb) = \mathtt{f}\}$;
\item
$E_i \Delta(\bb)$ 
has a $\Delta$-flag with sections 
$\{\Delta(\bb-\sigma_t \bd_t)\:|\:1 \leq t \leq n, \isig_t(\bb) =
\mathtt{e}\}$;
\item
the endomorphism
$\Phi\Big(
\mathord{
\begin{tikzpicture}[baseline = -2]
	\draw[-,thick,darkred] (0.08,-.15) to (0.08,.3);
      \node at (0.08,0.05) {$\color{darkred}\bullet$};
   \node at (0.08,-.25) {$\scriptstyle{i}$};
\end{tikzpicture}
}
\Big)_M:
F_i M\rightarrow F_i M$
is nilpotent.
\end{itemize}
\end{definition}

Theorems~\ref{itsplits}, \ref{itshw}, \ref{itacts} and \ref{itlifts}
together imply:

\begin{theorem}\label{typea}
The supercategory $\sO$ defined in section~\ref{s2} splits as $\O \oplus \Pi \O$, 
with $\O$
admitting all of the additional structure needed to make it into an
$\mathfrak{sl}_\infty$-tensor product categorification of $V^{\otimes \bsigma}$.
\end{theorem}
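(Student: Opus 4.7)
The plan is essentially to assemble the results established in Sections~\ref{s2}--\ref{s6} and verify that together they supply precisely the data and axioms of Definition~\ref{tpcdef}. I would proceed in four steps.

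First, the decomposition $\sO = \O \oplus \Pi \O$ is exactly the content of Theorem~\ref{itsplits}, so I can immediately restrict attention to $\O$. Next, Theorem~\ref{itshw} says $\O$ is a highest weight category with weight poset $(\B,\preceq)$ whose standard objects $\Delta(\bb)$ are (evenly isomorphic to) the Verma supermodules $M(\bb)$. This handles the first bullet point of Definition~\ref{tpcdef}.

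Second, I would construct the adjoint pairs $(F_i, E_i)$. The functors $F_i, E_i : \O \to \O$ are defined in (\ref{them}) as restrictions of the summands $\sF_j, \sE_j$ of $\sF, \sE$, where $j = \sqrt{z+i}\sqrt{z+i+1}$. From Section~\ref{s3}, $\sF$ and $\sE$ are biadjoint via the canonical $(\eta,\eps)$ adjunctions, and these restrict to biadjunctions $(\sE_j, \sF_j)$ and $(\sF_j, \sE_j)$ on each generalized eigenspace, hence to biadjunctions on the restrictions $F_i$ and $E_i$. In particular $E_i$ is a left (and right) adjoint of $F_i$, which is the first of the extra axioms.

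Third, the monoidal functor $\Phi : \QH \to \calEnd(\O)$ with $\Phi(i) = F_i$ is precisely Theorem~\ref{itlifts}, and that same theorem records the nilpotency of $\Phi\!\bigl(\mathord{
\begin{tikzpicture}[baseline = -2]
\draw[-,thick,darkred] (0.08,-.15) to (0.08,.3);
\node at (0.08,0.05) {$\color{darkred}\bullet$};
\node at (0.08,-.25) {$\scriptstyle{i}$};
\end{tikzpicture}
}\bigr)_M$ for each $M \in \ob\O$, taking care of the last extra axiom. Finally, the axioms concerning $F_i \Delta(\bb)$ and $E_i \Delta(\bb)$ having $\Delta$-flags with the prescribed sections are immediate from Theorem~\ref{maintfthm}: its parts (1) and (2) give such filtrations of $\sF_j M(\bb)$ and $\sE_j M(\bb)$ indexed precisely by the positions $t$ with $\isig_t(\bb) = \mathtt{f}$ and $\isig_t(\bb) = \mathtt{e}$, respectively; restricting these filtrations to $\O$ (using that every section lies in $\O$ since each $M(\bb \pm \sigma_t \bd_t)$ does) and identifying $\Delta(\bb) = M(\bb)$ via Theorem~\ref{itshw} yields exactly the required $\Delta$-flags.

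There is no real obstacle remaining — the substantive work was done in Theorems~\ref{maintfthm}, \ref{itsplits}, \ref{itshw}, \ref{itacts} and \ref{itlifts}. The one point demanding a small amount of care is to make sure that the biadjunction on $\sF, \sE$ really does pass to a biadjunction on the eigenfunctors $F_i, E_i$ after restriction from $\sO$ to $\O$, but this is automatic since $\O$ is a Serre subcategory stable under both functors and the unit/counit maps of the adjunction commute with the generalized eigenspace decompositions of $x$ and $x^\ast$.
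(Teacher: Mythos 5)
Your assembly matches the paper's (extremely terse) proof: the paper simply states that Theorems~\ref{itsplits}, \ref{itshw}, \ref{itacts} and \ref{itlifts} together give the result, and you have correctly unpacked what each of those contributes along with the supporting facts about biadjunction from Section~\ref{s3}. The only cosmetic difference is that you invoke Theorem~\ref{maintfthm} directly for the $\Delta$-flag axiom, whereas the paper routes this through Theorem~\ref{itacts} (which suffices because $F_i$, $E_i$ are shown to preserve $\O^\Delta$ and classes of standards are linearly independent in $K_0$, so the Grothendieck-group identity pins down the sections); both are valid.
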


As we already mentioned in the introduction, 
to complete the proof of our Main Theorem,
we
just need to appeal to the following results from \cite{BLW}:

\begin{theorem}
The supercategory $\sO'$ from the introduction decomposes as $\O'
\oplus \Pi \O'$, with $\O'$ admitting the structure of an
$\mathfrak{sl}_\infty$-tensor product categorification of
$V^{\otimes \bsigma}$.
\end{theorem}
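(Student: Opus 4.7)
My plan is to mirror the strategy of Sections~\ref{s2}--\ref{s6} in the strictly simpler setting of the general linear Lie superalgebra $\g'$, where the Cartan subalgebra $\t'$ is purely even and no Clifford-like structure on weight spaces appears. Each of the four pieces of data required by Definition~\ref{tpcdef} has a direct $\g'$-analog whose verification is a simplification of the corresponding $\q_n$-argument.

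First, the splitting $\sO' = \O' \oplus \Pi\O'$ is built into the parity conventions of the introduction: the highest weight vector of each $L'(\bb)$ has a prescribed parity, and weight considerations then force odd morphisms between objects of $\O'$ to vanish, so every object decomposes uniquely as a sum of an object of $\O'$ and one of $\Pi\O'$. Next I would check that $\O'$ is a highest weight category with poset $(\B,\preceq)$ and standard objects the Verma supermodules $M'(\bb) := U(\g') \otimes_{U(\b')} \K_{\lambda'_\bb}$, following Section~\ref{s2}. The analog of Lemma~\ref{projectivity} is trivial since weight spaces of $\t'$ are one-dimensional, while the typicality-implies-projectivity lemma (Lemma~\ref{typicaldominant}) goes through via central character considerations for $U(\g')$ essentially verbatim, and duality $\star$ is defined as before.

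To install the categorical action, let $U'$ denote the natural $\g'$-supermodule, with $\Z/2$-grading induced by $\bsigma$. Tensoring with $U'$ and its dual defines exact biadjoint endofunctors $\sF',\sE':\sO' \to \sO'$. The (non-Clifford) Casimir tensor $\sum_{r,s} f'_{r,s} \otimes e'_{s,r} \in U(\g') \otimes U(\g')$ yields an even natural transformation $x':\sF' \Rightarrow \sF'$ whose eigenvalues exhaust $\Z$, decomposing $\sF' = \bigoplus_{i \in I} F'_i$ and similarly $\sE' = \bigoplus_{i \in I} E'_i$. A Verma flag computation entirely parallel to Lemma~\ref{eigenvalues}, but with no Clifford signs or square roots to track, shows that $F'_i$ and $E'_i$ act on the Verma flag of $M'(\bb)$ matching (\ref{haha}). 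The Bruhat-order-controlled Verma flag of $P'(\bb)$ is then produced by the inductive argument of Lemma~\ref{construction} and Theorem~\ref{mainsplitthm}, which depends only on the combinatorial action (\ref{haha}) and not on the particular categorification used.

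Finally, to lift to a strict monoidal functor $\Phi':\QH \to \mathcal{E}nd(\O')$, one first constructs a monoidal functor from the degenerate affine Hecke category (the non-Clifford counterpart of $\AHC$) to $\mathcal{E}nd(\sO')$ sending the generating object to $\sF'$, the dot to $x'$, and the crossing to the tensor swap; then one applies the Brundan--Kleshchev isomorphism between cyclotomic degenerate affine Hecke algebras and cyclotomic quiver Hecke algebras of type $\ssl_\infty$. This is the no-Clifford specialization of Theorem~\ref{kkttheorem} and is substantially more elementary, since there is no open dot to keep track of; the nilpotency of the dot action is then automatic from the eigenspace decomposition together with the analog of Corollary~\ref{minpoly}. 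The main obstacle, as in our treatment of $\sO$, is maintaining control of the Bruhat order under the action of $F'_i$, which is settled by Lemma~\ref{not} together with Lemma~\ref{construction}---identical combinatorics, independent of whether one is working with $\q_n$ or $\g'$. Since the full argument has been carried out in detail in \cite{BLW}, we merely cite it here.
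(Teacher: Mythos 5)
Your proposal is correct, and in the end it takes the same route as the paper: the paper's entire proof of this theorem consists of the single line ``This is a special case of \cite[Theorem 3.10]{BLW}.'' Your closing sentence matches this, so at the level of logical dependency you and the paper agree completely.

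The long sketch you supply before the citation is accurate in outline and does correctly describe, in broad strokes, the technology that goes into \cite[Theorem 3.10]{BLW} (and its antecedent \cite{B0}): the parity splitting $\sO' = \O' \oplus \Pi\O'$ via the weight-determined parity pattern, the highest weight structure on $\O'$, the biadjoint functors $U' \otimes -$ and $(U')^* \otimes -$ with the Casimir-derived eigenspace decomposition into $F_i'$, $E_i'$, and the passage from a degenerate affine Hecke action to a quiver Hecke action. But be aware that the paper explicitly chose \emph{not} to reprove any of this; it is cited as a black box, precisely so that the new content of the article (the $\q_n$ side, which genuinely requires the Hecke--Clifford and KKT machinery you do not need for $\gl$) can be compared against it. In a referee's report one might even flag your sketch as misleading, since it reads as if Sections~\ref{s2}--\ref{s6} of \emph{this} paper were being rerun in the $\gl$ setting, when in fact the logical flow is the reverse: \cite{BLW} came first and Sections~\ref{s2}--\ref{s6} adapt it to $\q_n$. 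The one-line citation is the honest proof here, and your final sentence is what should survive.
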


\begin{proof}
This is a special case of \cite[Theorem 3.10]{BLW}.
\end{proof}

\begin{theorem}
Any two $\mathfrak{sl}_\infty$-tensor product categorifications of
$V^{\otimes \bsigma}$ are strongly equivariantly equivalent
(in the sense of \cite[Definition 3.1]{LW}).
\end{theorem}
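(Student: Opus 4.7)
The plan is to follow the strategy that proves the cited result in \cite{BLW}. Let $\C$ and $\C'$ be two $\mathfrak{sl}_\infty$-tensor product categorifications of $V^{\otimes\bsigma}$. I would first fix a weight $\ba_0 \in \B$ that is minimal in the Bruhat order on some sufficiently large finite ``interval'' of $\B$ and that is simultaneously typical-dominant in the sense of Lemma~\ref{typicaldominant}, so that in both categories the corresponding standard object coincides with its projective cover, $P(\ba_0) = \Delta(\ba_0)$ and $P'(\ba_0) = \Delta'(\ba_0)$. Such an $\ba_0$ always exists: one arranges widely separated, strictly decreasing coordinates on positive-sign strands and strictly increasing coordinates on negative-sign strands.

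Next I would form the ``super-generator''
$$
T := \bigoplus_{\bi \in I^{\leq N}} F_\bi P(\ba_0) \in \ob \C
$$
for $N \gg 0$, and likewise $T' \in \ob \C'$. The $\Delta$-flag axioms for the $F_i$'s, combined with an inductive argument in the spirit of Lemma~\ref{construction}, imply that every indecomposable projective object of $\C$ whose label lies in the chosen finite interval appears as a direct summand of $T$; in particular $T$ is a projective generator on a block-by-block basis, and the same for $T'$. The monoidal functor $\Phi:\QH \to \mathcal{E}nd(\C)$ then induces a locally unital algebra homomorphism from a suitable cyclotomic quotient of the quiver Hecke algebra into $\End_\C(T)$, and analogously for $\C'$. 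The central step is to show that both of these homomorphisms factor through canonical isomorphisms with a common locally unital algebra, namely Webster's tensor product algebra $T^{\bsigma}$ of \cite[$\S$4]{Web} attached to $V^{\otimes\bsigma}$.

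Once such an identification $\End_\C(T) \cong T^{\bsigma} \cong \End_{\C'}(T')$ compatible with the two $\QH$-actions is in hand, the desired strongly equivariant equivalence is supplied by the Morita-theoretic recipe
$$
\mathbb{E} := \Hom_{\C}(T,-) \otimes_{T^{\bsigma}} T' : \C \to \C',
$$
strong equivariance being automatic from the $\QH$-compatibility of the identification. The main obstacle, and the reason the statement is nontrivial, is proving that the map $T^{\bsigma} \to \End_\C(T)$ is injective. Surjectivity is comparatively soft: it follows from a generation argument using the $\Delta$-flag axioms. Injectivity, however, requires sharp lower bounds on the dimensions of the $\Hom$-spaces $\Hom_\C(F_\bi P(\ba_0), F_{\bi'} P(\ba_0))$, and in \cite{BLW} this is obtained by a careful double-centralizer argument combined with a basis theorem for Webster's tensor product algebras. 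Essentially all of the substantive work is concentrated in this injectivity step; for our application it is entirely packaged into the cited theorem.
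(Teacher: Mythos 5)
The paper's own proof of this theorem is a single sentence: it is cited directly as a special case of \cite[Theorem 2.12]{BLW}. You have instead attempted to sketch the proof of the cited result, which is a reasonable thing to try, but the sketch departs from the actual argument in \cite{BLW, LW} and has some internal problems as well.

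First, a structural point: the uniqueness theorem in \cite{LW} (and its $\mathfrak{sl}_\infty$ extension in \cite{BLW}) is proved by a flat-deformation argument in the tradition of Soergel's work on category $\O$: one deforms the categorification over a complete local base, shows the generic fibre is controlled by standard objects alone, and uses this to rigidify the endomorphism algebra of a suitable projective (pro-)generator. Webster's tensor product algebras do not enter that proof as its engine. Your sketch instead runs a double-centralizer/Morita argument with $T^{\bsigma}$ appearing as the middle term, and then concedes that the crucial injectivity ``is entirely packaged into the cited theorem.'' Once you defer that step, your argument is not an independent proof but an elaborately annotated citation --- so logically you have proved neither more nor less than the paper does, while potentially misleading the reader about how \cite{BLW} actually proceeds.

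Second, a few concrete difficulties with the sketch itself. (i) A single $\ba_0$ cannot yield a projective generator: the superfunctors $F_i$ change the $\mathfrak{sl}_\infty$-weight $|\WT(\cdot)|$ and hence move between blocks, so $\bigoplus_{\bi\in I^{\leq N}} F_\bi P(\ba_0)$ sees only a bounded cone of blocks emanating from the one containing $\ba_0$; to cover a given finite interval of $\B$ one needs a family of typical-dominant $\ba$'s, as is implicit in the proof of Theorem~\ref{mainsplitthm} where the auxiliary $\ba$ depends on the target $\bb$. (ii) The typical-dominant weights produced by ``spreading out'' the coordinates are \emph{maximal}, not minimal, in the Bruhat order within their block (one sees this directly from Lemma~\ref{eleme}: spreading out maximizes the partial sums $N_{[1,s]}$), and in any case Bruhat comparability only holds within a fixed block, so ``minimal on a large finite interval of $\B$'' is not a meaningful hypothesis. (iii) The statement that strong equivariance ``is automatic from the $\QH$-compatibility'' needs an argument: one must check compatibility of adjunction data and of the natural transformations on both sides, which is exactly what makes \cite[Definition 3.1]{LW} nontrivial. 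In sum: your high-level intuition (endomorphism algebras of projectives are rigid, hence the categories are Morita equivalent) points in the right direction, but the specifics do not reproduce the cited proof, and the step you skip is precisely the content of the theorem.
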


\begin{proof}
This is a special case of \cite[Theorem 2.12]{BLW}.
\end{proof}

We get at once that the categories $\O$ and $\O'$ are equivalent,
hence so too are $\sO$ and $\sO'$. This already proves the Main
Theorem from the introduction in the case that $n$ is even. When $n$
is odd, one also needs to apply the Lemma from the introduction to see that
the supercategory $\sO$ in the statement of the Main Theorem is the Clifford
twist of the supercategory $\sO$ being studied here.

\section{Canonical basis} \label{cbs}

Combining our Main Theorem with the results of \cite{CLW, BLW}, it
follows that the composition
multiplicities
$[M(\ba):L(\bb)]$ can be obtained by evaluating certain
parabolic Kazhdan-Lusztig polynomials at $q=1$.
In this section, we explain a simple algorithm to compute these
polynomials explicitly.
The algorithm 
is similar in spirit to the algorithm explained in \cite[$\S$2-j]{B0}, but actually the
variant here is both easier
to implement and a little faster. It also has the
advantage of working
for arbitrary sign sequences $\bsigma$, whereas the approach in
\cite{B0} only makes sense for {\em normally-ordered} $\bsigma$'s,
i.e. ones in which
all $+$'s preceed all $-$'s. (But note that one can easily transition between different sign sequences 
as explained in \cite[$\S$5]{CL}.)

We first need to introduce the quantum analog of the
$\mathfrak{sl}_\infty$-module $V^{\otimes\bsigma}$. We will
use similar notation to before, 
but decorated with dots to indicate $q$-analogs.
Consider the generic quantized enveloping algebra $U_q \mathfrak{sl}_\infty$
over the field $\Q(q)$.
This has standard generators 
$\{\dot e_i, \dot f_i, \dot k_i, \dot k_i^{-1}\:|\:i
\in I\}$. We work with the comultiplication $\Delta$ defined from
\begin{equation}\label{comult}
\Delta(\dot{f}_i) = 
1 \otimes \dot{f}_i  + \dot{f}_i \otimes \dot{k}_i,
\quad
\Delta(\dot{e}_i) = 
\dot{k}_i^{-1} \otimes \dot{e}_i  + \dot{e}_i \otimes 1,
\quad
\Delta(\dot{k}_i) = \dot{k}_i \otimes \dot{k}_i.
\end{equation}
We have the natural $U_q \mathfrak{sl}_\infty$-module $\dot V^+$ on
basis $\{\dot v^+_i\:|\:i \in I\}$, and its dual $\dot V^-$ on basis
$\{\dot v^-_i\:|\:i \in I\}$.
The Chevalley generators $\dot f_i$ and $\dot e_i$ 
act on these basis vectors by exactly the same formulae (\ref{e})--(\ref{f}) 
as at $q=1$, and also
\begin{equation}
\dot k_i \dot v^+_j = q^{\delta_{i,j}-\delta_{i+1,j}} \dot v^+_j,
\qquad
\dot k_i \dot v^-_j = q^{\delta_{i+1,j}-\delta_{i,j}}\dot v^-_j.
\end{equation}
Then we form the tensor space $\dot V^{\otimes \bsigma} := \dot V^{\sigma_1}
\otimes\cdots\otimes \dot V^{\sigma_n}$,
which is a $U_q \mathfrak{sl}_\infty$-module with its monomial basis 
$\left\{\dot v_\bb := \dot v_{b_1}^{\sigma_1} \otimes\cdots\otimes
  \dot v_{b_n}^{\sigma_n}\:\big|\:\bb \in \B\right\}$.

Next we pass to a certain
completion 
$\widehat{V}^{\otimes\bsigma}$.
Recall the inverse dominance order $\succeq$ on $P^n$ from
Definition~\ref{bruhatdef}.
We define $\widehat{V}^{\otimes\bsigma}$
to be the $\Q(q)$-vector space
consisting of formal linear combinations of the form
$\sum_{\bb \in \B} p_\bb(q) \dot v_\bb$ 
for rational functions $p_\bb(q) \in \Q(q)$, such that 
the {\em support} $\{\bb \in \B\:|\:p_\bb(q) \neq 0\}$
is contained in a finite union of sets of the form
$
\{\bb \in \B\:|\:\WT(\bb) \succeq \bbeta\}
$
for $\bbeta \in P^n$.

\begin{lemma}
The action of 
$U_q
\mathfrak{sl}_\infty$
on $\dot V^{\otimes\bsigma}$ extends to a well-defined action on
$\widehat{V}^{\otimes\bsigma}$
such that $u \big(\sum_{\bb \in \B} p_\bb(q) \dot v_\bb\big) = 
\sum_{\bb \in \B} p_\bb(q) \,u \dot v_\bb$
for every $u \in U_q \mathfrak{sl}_\infty$.
Moreover,
$\widehat{V}^{\otimes \bsigma}$ splits as the direct
sum of its weight spaces.
\end{lemma}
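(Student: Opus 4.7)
The plan is to extend the action one Chevalley generator at a time, check that the extension preserves $\widehat{V}^{\otimes\bsigma}$, and then derive the weight decomposition from the fact that each set $\{\bb : \WT(\bb) \succeq \bbeta\}$ lies in a single total-weight locus.

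First, take $v = \sum_{\bb \in \B} p_\bb(q) \dot v_\bb \in \widehat{V}^{\otimes\bsigma}$ and let $u$ be one of $\dot f_i, \dot e_i, \dot k_i^{\pm 1}$. I would define $uv$ to be the formal sum whose $\bc$-coefficient is $\sum_\bb p_\bb(q) [\dot v_\bc : u \dot v_\bb]$. For $u = \dot f_i$ or $\dot e_i$, the basis vector $\dot v_\bc$ appears in $u \dot v_\bb$ only when $\bb$ differs from $\bc$ in a single coordinate $t$ forced by $i$, so at most $n$ of the $p_\bb$ contribute to any one $\bc$-coefficient; for $\dot k_i^{\pm 1}$ the action is diagonal. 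Working at the level of the formal direct product $\prod_\bb \Q(q) \dot v_\bb$, this coefficientwise argument both extends each generator to the product and shows that any relation holding in $\dot V^{\otimes\bsigma}$ continues to hold there, giving a $U_q\mathfrak{sl}_\infty$-action on the product.

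The main step is then to check that $\widehat{V}^{\otimes\bsigma}$ itself is preserved. The key observation is that $\WT(\bb + \sigma_t \bd_t)$ differs from $\WT(\bb)$ only in its $t$-th slot, by $-\alpha_i$ when $\isig_t(\bb) = \mathtt{f}$ (and by $+\alpha_i$ in the analogous computation for $\dot e_i$). Unpacking Definition~\ref{bruhatdef}, I expect to show that $\{\bb : \WT(\bb) \succeq \bbeta\}$ is sent by $\dot f_i$ into the finite union, over $t \in \{1, \ldots, n\}$, of the analogous sets with $\bbeta$ modified by $-\alpha_i$ in its $t$-th coordinate, and similarly for $\dot e_i$; the action of $\dot k_i^{\pm 1}$ leaves supports unchanged. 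Applying this to each $\bbeta_k$ in a support bound for $v$ produces the required bound for $uv$. This is the only nontrivial step, and it is where the $s=n$ endpoint equality in the definition of $\succeq$ plays a role.

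The weight decomposition follows from that same endpoint equality: every $\bb$ with $\WT(\bb) \succeq \bbeta$ has total weight $|\WT(\bb)| = \beta_1 + \cdots + \beta_n$, a fixed scalar depending only on $\bbeta$. Hence if $v$ has support in $\bigcup_{k=1}^N \{\WT \succeq \bbeta_k\}$, at most $N$ total weights occur in $v$. Grouping terms of $v$ by total weight then writes $v$ as a finite sum of weight vectors; each summand still has support contained in the original union of sets, hence lies in $\widehat{V}^{\otimes\bsigma}$. Uniqueness of the decomposition is immediate from the linear independence of $\dot v_\bb$'s across distinct total weights.
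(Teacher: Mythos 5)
Your proposal is correct and follows essentially the same approach as the paper: extend the action coefficientwise, track how $\WT$ changes under a Chevalley generator, unwind the definition of $\succeq$ to bound the support, and use the endpoint equality at $s=n$ for the weight decomposition. The only small refinement in the paper is that a single modified $\bbeta$ suffices regardless of which slot $t$ actually changed---namely $(\beta_1+\alpha_i,\beta_2,\dots,\beta_n)$ for $\dot e_i$ and $(\beta_1,\dots,\beta_{n-1},\beta_n-\alpha_i)$ for $\dot f_i$, since adding or subtracting $\alpha_i$ at the endpoints is compatible with all the partial-sum inequalities---so the finite union over $t$ you build is not strictly necessary, though it is of course also sufficient.
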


\begin{proof}
For the first assertion, we need to show that the expression
$\sum_{\bb \in \B} p_\bb(q) \,u \dot v_\bb$ satisfies the condition on
its support required to belong to $\widehat{V}^{\otimes\bsigma}$.
It suffices to check this for $u \in \{\dot f_i, \dot e_i\:|\:i \in I\}$.
If $\WT(\bb) \succeq (\beta_1,\dots,\beta_n)$ and $v_\ba$ appears with non-zero
coefficient in the expansion of
$\dot e_i v_\bb$ (resp. $\dot f_i v_\bb$),
then $\WT(\ba)$ is equal to $\WT(\bb)$ with $\alpha_i$ added
(resp. subtracted) from one of its entries.
Hence, $\WT(\ba) \succeq (\beta_1+\alpha_i,\beta_2,\dots,\beta_n)$
(resp. $\WT(\ba) \succeq
(\beta_1,\dots,\beta_{n-1},\beta_n-\alpha_i)$).
This is all that is needed.

The fact that $\widehat{V}^{\otimes\bsigma}$ is a direct sum of its weight spaces 
follows because all $v_\bb$ with $\WT(\bb) \succeq (\beta_1,\dots,\beta_n)$ are of the same
weight $\beta_1+\cdots+\beta_n$.
\end{proof}

The completion $\widehat{V}^{\otimes \bsigma}$ admits a
{\em bar involution}
$\psi: \widehat{V}^{\otimes\bsigma}
\rightarrow 
\widehat{V}^{\otimes\bsigma}$ which is anti-linear with respect to
the field automorphism 
$\Q(q) \rightarrow \Q(q), q \mapsto q^{-1}$.
To define $\psi$, let $\Theta$ be Lusztig's quasi-$R$-matrix
from \cite[Theorem 4.1.2]{Lubook}; 
note for
this due to our different choice of $\Delta$ compared to \cite{Lubook}
that Lusztig's $v$ is our $q^{-1}$ (and his $E_i, F_i, K_i$ are our
$\dot e_i, \dot f_i, \dot
k_i^{-1}$).
We proceed by induction on $n$, setting $\psi(\dot v_i^+) = \dot
v_i^+$
and $\psi(\dot v_i^-) = \dot v_i^-$ in case $n=1$. 
For $n > 1$, let $\bar\bsigma$
and $\bar\bb$ denote the $(n-1)$-tuples
$(\sigma_1,\dots,\sigma_{n-1})$
and $(b_1,\dots,b_{n-1})$, respectively.
Assuming that the analog $\bar\psi$ of $\psi$ on the space
$\widehat{V}^{\otimes\bar\bsigma}$
has already been defined by induction,
we define $\psi$ on $\widehat{V}^{\otimes\bsigma}$ by setting
\begin{equation}\label{bardef}
\psi\left({\sum_{\bb \in \B}} p_\bb(q) \dot v_\bb\right) := 
{\sum_{\bb \in \B}} p_\bb(q^{-1})
\,\Theta\!\left(\bar\psi(\dot v_{\bar\bb}\right) \otimes \dot
v_{b_n}^{\sigma_n}).
\end{equation}

\begin{lemma}
The antilinear map $\psi$ defined by
(\ref{bardef}) is a well-defined involution of
$\widehat{V}^{\otimes\bsigma}$ preserving all weight spaces and
commuting with the actions of $\dot f_i, \dot e_i$ for all $i \in I$.
Moreover, $\psi(\dot v_\bb)$ is equal to $\dot v_\bb$ plus a (possibly infinite) 
$\Z[q,q^{-1}]$-linear combination of $\dot v_\ba$'s for $\ba \succ
\bb$.
\end{lemma}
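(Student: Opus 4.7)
The plan is to adapt the standard construction of the bar involution on a tensor product of based modules from \cite[Chapter 27]{Lubook} to the completed setting $\widehat{V}^{\otimes\bsigma}$. All four assertions are proved by induction on $n$, the base case $n=1$ being trivial since $\dot v^{\pm}_i$ is defined to be bar-invariant. Throughout, the key tool is the weight decomposition $\Theta = \sum_{\nu \in Q_+} \Theta_\nu$ with $\Theta_\nu \in (U_q^- \mathfrak{sl}_\infty)_{-\nu} \otimes (U_q^+ \mathfrak{sl}_\infty)_\nu$ and $\Theta_0 = 1$, together with the two fundamental properties of $\Theta$ established in \cite[Theorem 4.1.2, Corollary 4.1.3]{Lubook}: the intertwiner identity $\Theta \cdot \Delta(u) = \bar\Delta(u) \cdot \Theta$ for all $u \in U_q \mathfrak{sl}_\infty$ (where $\bar\Delta(u) := (\bar\otimes\bar) \Delta(\bar u)$ and bar on $U_q$ fixes $\dot e_i, \dot f_i$ and sends $q \mapsto q^{-1}$), and the inverse identity $\bar\Theta \cdot \Theta = 1$.

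First I will check well-definedness together with unitriangularity. By the inductive hypothesis, $\bar\psi(\dot v_{\bar\bb}) = \dot v_{\bar\bb} + (\text{higher-order terms in } \bar\succeq)$, so the pure tensor $\bar\psi(\dot v_{\bar\bb}) \otimes \dot v_{b_n}^{\sigma_n}$ has support inside $\{\ba \in \B\:|\:\WT(\ba) \succeq \WT(\bb)\}$. Applying a term $\Theta_\nu$ with $\nu \in Q_+$ shifts the $n$-th factor weight upward by $\nu$ (in the dominance order on $P$) and simultaneously shifts weight in the first $n-1$ factors downward by $\nu$; by the defining inequalities of $\succeq$ on $P^n$, this produces only terms $\dot v_\bc$ with $\bc \succeq \bb$, with equality occurring precisely for the $\Theta_0$-contribution $\dot v_\bb$. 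Since for each fixed first-factor weight only finitely many $\Theta_\nu$ can contribute non-trivially (the second factor $\dot v^{\sigma_n}_{b_n}$ is a weight vector of a minuscule representation), the infinite sum produces a well-defined element of $\widehat{V}^{\otimes\bsigma}$. Extending antilinearly via (\ref{bardef}) gives a well-defined weight-preserving map $\psi$, with the desired unitriangular expansion having coefficients in $\Z[q,q^{-1}]$ thanks to the integrality of $\Theta$.

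Next, to show $\psi$ commutes with $\dot f_i$ and $\dot e_i$, I will use the intertwiner property of $\Theta$ combined with the comultiplication formulae (\ref{comult}). Write the action of $u \in \{\dot f_i, \dot e_i\}$ on $\dot v_{\bar\bb} \otimes \dot v_{b_n}^{\sigma_n}$ using $\Delta(u)$, apply the definition of $\psi$, push $\Theta$ past $\Delta(u)$ to convert it into $\bar\Delta(u)$, then use the inductive hypothesis that $\bar\psi$ intertwines the $U_q$-action on the first $n-1$ factors. Since $\bar u = u$ for the Chevalley generators under consideration, the identity $\psi(u \cdot x) = u \cdot \psi(x)$ drops out.

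Finally, the involution property $\psi^2 = \id$ is where the real work lies. Iterating (\ref{bardef}), one obtains $\psi^2(\dot v_\bb) = \Theta \cdot (\overline{\Theta \cdot (\bar\psi(\dot v_{\bar\bb}) \otimes \dot v^{\sigma_n}_{b_n})})$ after careful bookkeeping with the antilinearity; the inner bar (which is the bar on $\widehat V^{\otimes\bsigma}$, not on $\dot V$) must be unwound using that $\Theta_\nu$ has coefficients in $\Z[q,q^{-1}]$ so that conjugation replaces $\Theta$ by $\bar\Theta$. Invoking $\bar\Theta \cdot \Theta = 1$ and the inductive identity $\bar\psi^2 = \id$ reduces the expression to $\dot v_{\bar\bb} \otimes \dot v^{\sigma_n}_{b_n} = \dot v_\bb$, as required. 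The hard part will be verifying that all the infinite sums and the identity $\bar\Theta \cdot \Theta = 1$ can be legitimately applied to vectors in $\widehat V^{\otimes\bsigma}$; this ultimately reduces to the same convergence/support analysis from the first step, but one must track the bar-conjugation carefully through the completion.
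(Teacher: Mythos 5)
Your proof is correct and essentially matches the paper's. Both proceed by induction on $n$, rely on the weight decomposition $\Theta = \sum_\nu \Theta_\nu$ with $\Theta_0 = 1$, exploit the minuscule nature of the $n$-th tensor factor to control which $\Theta_\nu$ act non-trivially on $\dot v_{b_n}^{\sigma_n}$, use the integrality of $\Theta$, and invoke the standard Lusztig argument for the intertwining and involution properties. The paper is a bit more explicit on the triangularity step --- it records the precise observation that only monomials $\dot e_i \dot e_{i+1}\cdots \dot e_{j-1}$ (resp.\ $\dot e_{i-1}\cdots \dot e_j$) survive on $\dot v_j^{\sigma_n}$, leading to the explicit formula $\Theta(v\otimes \dot v_j^{\sigma_n}) = v\otimes \dot v_j^{\sigma_n} + \sum_i(\Theta_{i,j}v)\otimes \dot v_i^{\sigma_n}$ --- whereas you phrase this more abstractly in terms of weight shifts, but the content is the same. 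Conversely, you sketch the commutation and involution arguments in more detail; the paper simply cites \cite[\S27.3.1]{Lubook} for these.

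One small caveat worth fixing: your parenthetical claim that ``for each fixed first-factor weight only finitely many $\Theta_\nu$ can contribute non-trivially'' is not really the right thing to say, and it does not establish well-definedness. The sum defining $\psi(\dot v_\bb)$ genuinely can be infinite --- that is exactly what the ``possibly infinite'' in the statement of the lemma anticipates --- since infinitely many $\nu = \sigma_n(\eps_i - \eps_{b_n})$ produce non-zero terms. What actually proves well-definedness is the unitriangularity you establish: every term $\dot v_\bc$ satisfies $\bc \succeq \bb$, so the support is contained in the single set $\{\bc : \WT(\bc) \succeq \WT(\bb)\}$ and hence defines an element of $\widehat V^{\otimes\bsigma}$. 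You should lead with this rather than a spurious finiteness claim. (To extend $\psi$ to arbitrary elements of the completion one then also needs interval-finiteness of the inverse dominance order on $P^n$, as the paper notes.) With that adjustment your argument is complete.
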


\begin{proof}
Recall that $\Theta$ is a formal sum of terms
$\Theta_\beta$
for $\beta \in \bigoplus_{i \in I} \N \alpha_i$, with $\Theta_0 = 1$ and
$\Theta_\beta \in (U^-_q \mathfrak{sl}_\infty)_{-\beta} \otimes (U_q^+
\mathfrak{sl}_\infty)_\beta$.
The only monomials in the generators of $U_q^+\mathfrak{sl}_\infty$ that are non-zero
on $\dot v_{j}^{\sigma_n}$
are of the form $\dot e_i \dot e_{i+1}\cdots \dot e_{j-1}$
for $i \leq j$
if $\sigma_n=+$ (resp. the form $\dot e_{i-1} \dot e_{i-2} \cdots \dot e_{j}$
for $i \geq j$ if $\sigma_n=-$).
Using also the integrality of the quasi-$R$-matrix from 
\cite[Corollary 24.1.6]{Lubook} (or a direct calculation from \cite[Theorem
4.1.2(b)]{Lubook}),
it follows for any $v \in \widehat{V}^{\otimes\bar\bsigma}$ that 
\begin{equation}\label{rhs}
\Theta\left(v \otimes \dot
v_{j}^{\sigma_n}\right)
= v \otimes \dot
v^{\sigma_n}_{j}+
\sum_i \left(\Theta_{i,j} v\right) \otimes \dot v^{\sigma_n}_i
\end{equation}
summing over $i < j$ if $\sigma_n=+$ (resp. $i > j$ if
$\sigma_n=-$), 
for $\Theta_{i,j}$'s that are $\Z[q,q^{-1}]$-linear combinations of
monomials 
obtained by multiplying the generators
$\dot f_i, \dot f_{i+1},\dots, \dot f_{j-1}$
(resp. $\dot f_{i-1}, \dot f_{i-2}, \dots, \dot f_{j}$)
together in some order.
By induction, $\bar\psi(\dot v_{\bar\bb})$ equals $\dot v_{\bar\bb}$
plus a $\Z[q,q^{-1}]$-linear combination of $\dot v_{\bar\ba}$'s for
$\bar\ba \succ \bar\bb$.
Combining these two statements, we deduce that 
$$
\Theta\left(\bar\psi(\dot v_{\bar\bb}\right) \otimes \dot
v_{b_n}^{\sigma_n})
=
\dot v_{\bb}+\text{(a $\Z[q,q^{-1}]$-linear combination of
$\dot v_\ba$'s for $\ba \succ \bb$)}.
$$
This shows that the formula (\ref{bardef}) makes
$\psi(\dot v_\bb)$ into a well-defined element of
$\widehat{V}^{\otimes\bsigma}$ of the desired form.
The formula (\ref{bardef}) also makes sense for arbitrary sums $\sum_{\bb \in \B} p_\bb(q) \dot
v_\bb$
due to the interval-finiteness of the inverse dominance ordering
on $P^n$.
Finally, to see that $\psi$ commutes with the
actions of all $\dot f_i$ and $\dot e_i$, and that it is an
involution, one argues as in \cite[$\S$27.3.1]{Lubook}.
\end{proof}

Now we are in a position to apply ``Lusztig's Lemma''
as in the proof of \cite[Theorem 27.3.2]{Lubook}
to deduce 
for each $\bb \in \B$
that there is a unique vector
$\dot c_\bb \in \widehat{V}^{\otimes\bsigma}$
such that
\begin{itemize}
\item
$\psi(\dot c_\bb)=
\dot c_\bb$;
\item
$\dot c_\bb = \dot v_\bb + \text{a (possibly infinite)
$q\Z[q]$-linear combination of $\dot v_\ba$'s
for $\ba \succ \bb$}$.
\end{itemize}
This defines the {\em canonical basis} $\{\dot c_\bb\:|\:\bb \in \B\}$.
It is known (but non-trivial) that each $\dot c_\bb$
is always a {\em finite} sum of $\dot v_\ba$'s,
i.e. $\dot c_\bb \in \dot V^{\otimes \bsigma}$
before completion.
Moreover, the polynomials
$d_{\ba,\bb}(q)$ arising from the expansion
\begin{equation}
\dot c_\bb = {\sum_{\ba\in \B}} d_{\ba,\bb}(q) 
\dot v_\ba
\end{equation}
are some finite type A parabolic Kazhdan-Lusztig polynomials (suitably normalized).
All of these statements have a natural representation theoretic
explanation discussed in detail in \cite[$\S$5.9]{BLW}.
In particular, the results of \cite{BLW} (or \cite{CLW}) imply
the following.

\begin{theorem}\label{dnt}
Under the identification of $\mathbb{C} \otimes_{\mathbb{Z}} K_0(\O^\Delta)$ with $V^{\otimes \bsigma}$ from
Theorem~\ref{itacts}, 
$[P(\bb)]$ corresponds to the specialization $c_\bb$ of 
the canonical basis element $\dot c_\bb$ at $q=1$.
Equivalently,
$(P(\bb):M(\ba)) = [M(\ba):L(\bb)] = d_{\ba,\bb}(1)$ for each $\ba,\bb \in \B$.
\end{theorem}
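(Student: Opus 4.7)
My plan is to deduce Theorem~\ref{dnt} by transferring the analogous result for $\O'$ (which is known from \cite{CLW, BLW}) across the categorical equivalence provided by the Main Theorem, then combining this with BGG reciprocity. The real work has already been done in the earlier sections; what remains is careful bookkeeping of labelings.

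The first step is to exploit the fact (established in the previous section using Theorem~\ref{typea} together with \cite[Theorem 2.12]{BLW}) that the equivalence $\mathbb{E}$ between $\O$ and $\O'$ is a strongly equivariant equivalence of $\mathfrak{sl}_\infty$-tensor product categorifications. In particular, $\mathbb{E}$ is an equivalence of highest weight categories with common weight poset $(\B,\preceq)$. Since highest weight equivalences preserve standard objects, and since $\mathbb{E}$ sends $L(\bb) \mapsto L'(\bb)$ for every $\bb \in \B$, it must also send $M(\bb) \mapsto M'(\bb)$ and $P(\bb) \mapsto P'(\bb)$. Consequently the Verma multiplicities $(P(\bb):M(\ba))$ and the composition multiplicities $[M(\ba):L(\bb)]$ computed inside $\O$ coincide with the corresponding quantities inside $\O'$. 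Moreover the identification $\mathbb{C}\otimes_{\mathbb{Z}} K_0(\O^\Delta) \cong V^{\otimes\bsigma}$ of Theorem~\ref{itacts} is intertwined by $\mathbb{E}$ with the analogous identification $\mathbb{C}\otimes_{\mathbb{Z}} K_0(\O'^\Delta) \cong V^{\otimes\bsigma}$ supplied by \cite[Theorem~3.10]{BLW}, because both are $\mathfrak{sl}_\infty$-equivariant isomorphisms sending the Verma classes to the monomial basis $\{v_\bb\}$ in a weight-preserving manner, and such an intertwiner is unique.

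The second step is to invoke the analog of Theorem~\ref{dnt} for $\O'$, which is the main content of \cite{CLW,BLW}: namely, under the identification $\mathbb{C}\otimes_{\mathbb{Z}} K_0(\O'^\Delta) \cong V^{\otimes \bsigma}$ with $[M'(\bb)]\leftrightarrow v_\bb$, the class $[P'(\bb)]$ corresponds to $c_\bb$, so that $(P'(\bb):M'(\ba)) = d_{\ba,\bb}(1)$ for all $\ba, \bb \in \B$. Combined with the previous paragraph, this yields $(P(\bb):M(\ba)) = d_{\ba,\bb}(1)$.

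The final step is to append BGG reciprocity. The chain of equalities
\[
[M(\ba):L(\bb)] \;=\; \dim \Hom_\sO(P(\bb),M(\ba)^\star)_\0 \;=\; (P(\bb):M(\ba))
\]
was already recorded inside the proof of Corollary~\ref{bruhat}, using Lemmas~\ref{hom} and \ref{star}. Chaining this with the identifications above completes the proof. I do not anticipate a serious obstacle: the only delicate point is the compatibility of the two identifications of $\mathbb{C}\otimes_{\mathbb{Z}} K_0$ with $V^{\otimes\bsigma}$ under $\mathbb{E}$, which is settled by the uniqueness argument sketched above.
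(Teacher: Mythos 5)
Your proposal is correct and follows precisely the route the paper itself intends: the paper states Theorem~\ref{dnt} as a consequence of the Main Theorem together with the super Kazhdan-Lusztig conjecture for $\O'$ established in \cite{CLW,BLW}, and your argument simply spells out the transfer across the strongly equivariant equivalence $\O\simeq\O'$ coming from \cite[Theorem 2.12]{BLW}, noting that such an equivalence preserves standards, projectives and the $K_0$-identifications, and then appends the BGG reciprocity computation already recorded in the proof of Corollary~\ref{bruhat}. The one small overstatement is that the ``uniqueness of intertwiner'' claim is unnecessary: once one knows the equivalence sends $M(\bb)$ to $M'(\bb)$, compatibility of the two $K_0\cong V^{\otimes\bsigma}$ identifications is immediate by definition rather than by a uniqueness argument.
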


We are ready to explain our new algorithm to compute $\dot c_\bb$.
We proceed by induction on $n$.
In case $n=1$, we have that $\dot c_\bb = \dot v_\bb$ always.
If $n > 1$, we first 
compute $\dot c_{\bar\bb} \in \widehat{V}^{\otimes\bar\bsigma}$.
It is a linear combination of
finitely many $\dot v_{\bar\ba}$'s for $\bar\ba \succeq \bar\bb$.
Then we define $j \in I$ as follows.
\begin{itemize}
\item
If $\sigma_n=+$ then $j$ is the greatest integer such that $j \leq b_n$, and the following hold for all
$1 \leq r < n$ and all tuples
$\bar\ba = (a_1,\dots,a_{n-1})$ such that $\dot v_{\bar\ba}$ occurs in the expansion of $\dot c_{\bar\bb}$:
\begin{itemize}
\item[$\circ$] if $\sigma_r=+$ then $j \leq a_r$;
\item[$\circ$] if $\sigma_r=-$ then $j < a_r$.
\end{itemize}
\item
If $\sigma_n=-$ then $j$ is the smallest integer such that $j \geq b_n$, and the following hold for all
$1 \leq r < n$ and all tuples
$\bar\ba = (a_1,\dots,a_{n-1})$ such that $\dot v_{\bar\ba}$ occurs from the expansion of $\dot c_{\bar\bb}$:
\begin{itemize}
\item[$\circ$] if $\sigma_r=-$ then $j \geq a_r$;
\item[$\circ$] if $\sigma_r=+$ then $j > a_r$.
\end{itemize}
\end{itemize}

\begin{lemma}\label{lastofyear}
In the above notation, we have that
$\Theta\left(\dot c_{\bar \bb} \otimes \dot
v_j^{\sigma_n}\right) = \dot c_{\bar\bb} \otimes \dot v_j^{\sigma_n}$.
\end{lemma}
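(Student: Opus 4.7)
The plan is to expand the left-hand side using formula (\ref{rhs}), which gives
$$
\Theta(\dot c_{\bar\bb} \otimes \dot v_j^{\sigma_n}) = \dot c_{\bar\bb} \otimes \dot v_j^{\sigma_n} + \sum_i (\Theta_{i,j} \dot c_{\bar\bb}) \otimes \dot v_i^{\sigma_n},
$$
the sum ranging over $i < j$ if $\sigma_n=+$ and $i > j$ if $\sigma_n=-$. It thus suffices to show that $\Theta_{i,j} \dot c_{\bar\bb} = 0$ for every such $i$. By the description recalled from (\ref{rhs}), each $\Theta_{i,j}$ is a $\Z[q,q^{-1}]$-linear combination of monomials in the Chevalley generators indexed by the set $\{i, i+1, \ldots, j-1\}$ (resp.\ $\{j, j+1, \ldots, i-1\}$) in various orders. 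In every such monomial the rightmost factor is some $\dot f_k$ with $k < j$ (resp.\ $k \geq j$), so both assertions will follow at once from the stronger claim:
$$
\dot f_k \dot c_{\bar\bb} = 0 \quad\text{for all $k<j$ when $\sigma_n=+$, and all $k \geq j$ when $\sigma_n=-$.}
$$

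To prove this claim, I would write $\dot c_{\bar\bb} = \sum_{\bar\ba} p_{\bar\ba}(q) \dot v_{\bar\ba}$ and use iterated comultiplication (\ref{comult}) to see that $\dot f_k$ acts on each monomial $\dot v_{\bar\ba}$ as a sum over the $n-1$ tensor positions, where the contribution from position $r$ is proportional to $\dot f_k$ applied to $\dot v_{a_r}^{\sigma_r}$. By (\ref{f}) this position-$r$ contribution vanishes unless $k = a_r$ (in the $\sigma_r = +$ case) or $k = a_r - 1$ (in the $\sigma_r = -$ case). So the whole claim reduces to checking these inequalities position by position for each $\bar\ba$ in the support of $\dot c_{\bar\bb}$, which is exactly what the defining conditions on $j$ supply:

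If $\sigma_n = +$ and $k < j$, then for $\sigma_r = +$ one has $a_r \geq j > k$, while for $\sigma_r = -$ one has $a_r > j > k$, hence also $a_r - 1 \geq j > k$. Symmetrically, if $\sigma_n = -$ and $k \geq j$, then for $\sigma_r = +$ one has $a_r < j \leq k$, and for $\sigma_r = -$ one has $a_r \leq j \leq k < k+1$. In every case the factorwise action vanishes, so $\dot f_k \dot v_{\bar\ba} = 0$ and the claim follows.

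There is really no hard step here: once the expansion (\ref{rhs}) is invoked, the proof is a bookkeeping exercise, and the only subtle point is to observe that knowing $\dot f_k \dot c_{\bar\bb} = 0$ for the allowed values of $k$ is already enough to kill every word in $\Theta_{i,j}$, because the action is applied from the right. The defining conditions on $j$ were evidently engineered precisely so as to secure this vanishing, mirroring the combinatorial choice of $\ba$ made prior to Lemma~\ref{construction}.
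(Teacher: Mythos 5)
Your proof is correct and takes essentially the same approach as the paper's: invoke the expansion (\ref{rhs}) of the quasi-$R$-matrix action and then observe that the defining conditions on $j$ force each relevant $\dot f_k$ to annihilate $\dot c_{\bar\bb}$. The paper states the final step ("By the definition of $j$, all of these generators act as zero on $\dot c_{\bar\bb}$") without elaboration, whereas you spell out the factorwise comultiplication bookkeeping; this is a useful expansion of the same argument rather than a different route.
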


\begin{proof}
By (\ref{rhs}), we have that
$\Theta\left(\dot c_{\bar \bb} \otimes \dot
v_j^{\sigma_n}\right) = \dot c_{\bar\bb} \otimes \dot v_j^{\sigma_n}
+ \sum_i \left(\Theta_{i,j} \dot c_{\bar\bb} \right) \otimes \dot v_i^{\sigma_n}$
summing over $i < j$ if $\sigma_n=+$ (resp. $i > j$ if $\sigma_n=-$),
where
$\Theta_{i,j}$ is a linear combination of non-trivial monomials
in the generators $\dot f_{j-1}, \dot f_{j-2},\dots, \dot f_i$ (resp. $\dot f_j, \dot
f_{j+1},\dots, \dot f_{i-1}$).
By the definition of $j$, all of these generators act as zero on $\dot c_{\bar\bb}$.
\end{proof}

Lemma~\ref{lastofyear} shows
that the vector 
$\dot c_{\bar \bb} \otimes \dot v_j^{\sigma_n}\in
\widehat{V}^{\otimes\bsigma}$ is fixed by $\psi$.
Hence, so too is 
$X\left(\dot c_{\bar \bb} \otimes \dot v_j^{\sigma_n}\right)$
where
$$
X := 
\left\{
\begin{array}{ll}
\dot f_{b_n-1} \cdots \dot f_{j+1} \dot f_{j}&\text{if $\sigma_n = +$,}\\
\dot f_{b_n} \cdots \dot f_{j-2} \dot f_{j-1}&\text{if $\sigma_n = -$.}
\end{array}\right.
$$
By Lemma~\ref{not}, this new vector equals $\dot v_\bb$ plus a $\Z[q,q^{-1}]$-linear combination of $\dot
v_{\ba}$'s for $\ba \succ \bb$. If all but its leading coefficient lie in $q \Z[q]$, it is
already the desired vector $\dot c_{\bb}$.
Otherwise, one picks $\ba
\succ \bb$ minimal so that the $\dot v_\ba$-coefficient is not in $q
\Z[q]$, then subtracts a bar-invariant multiple of the recursively
computed vector $\dot c_\ba$ to remedy this defficiency. Continuing in
this way, we finally obtain a bar-invariant vector with all of the required
properties to be
$\dot c_{\bb}$.

The algorithm just described has been implemented in
{\sc Gap}, and is available at
{\tt http://pages.uoregon.edu/brundan/papers/A.gap}.
It is not obvious to us that it terminates in
finite time for every $\bb \in \B$. Based on examples, we believe that this is indeed the
case.

\begin{example}
Suppose $\bsigma = (+,+,-,-)$ and $\bb = (1,2,2,1)$.
By induction, we have $\dot c_{(1,2,2)} = 
\dot v_{(1,2,2)} + q \dot
v_{(2,1,2)} + q \dot v_{(1,3,3)} + q^2 \dot v_{(3,1,3)}$, so take $j=4$.
We compute
\begin{multline*}
\dot f_1 \dot f_2 \dot f_3 
(\dot v_{(1,2,2,4)} + q \dot
v_{(2,1,2,4)} + q \dot v_{(1,3,3,4)} + q^2 \dot v_{(3,1,3,4)})=
\dot v_{(1, 2, 2, 1)} 
+q\dot v_{(1, 2, 1, 2)}+
q\dot v_{(2, 1, 2, 1)}
\\+q\dot v_{(1, 4, 1, 4)}
+ q\dot v_{(1, 3, 3, 1)}
+q^2\dot v_{(2, 1, 1, 2)}
+q^2\dot v_{(3, 1, 3, 1)}+
q^2 \dot v_{(2, 3, 3, 2)}
+q^2 \dot v_{(4, 1, 1, 4)}
\\\:+
q^2\dot v_{(2, 4, 2, 4)}
+ (1+q^2) \dot v_{(1, 3, 1, 3)}
+ q^3\dot v_{(3, 2, 3, 2)}
+q^3\dot v_{(4, 2, 2, 4)}
+(q+q^3)\dot v_{(2, 3, 2, 3 )}\\+
(q+q^3) \dot v_{(3, 1, 1, 3)}
+ (q+q^3) 
\dot v_{(2, 2, 2, 2)}
+(q^2+q^4) \dot v_{(3, 2, 2, 3)}.
\end{multline*}
Then we subtract the recursively computed
\begin{multline*}
\dot c_{(1,3,1,3)}=
\dot v_{(1, 3, 1, 3)}
+
q \dot v_{(3, 1, 1, 3)}
+
q
\dot v_{(2, 3, 2, 3)}
+
q
\dot v_{(1, 4, 1, 4)}\\
+q^2 \dot v_{(3, 2, 2, 3)}
+q^2 \dot v_{(4, 1, 1, 4)} +  q^2 \dot v_{(2, 4, 2, 4)}
+
q^3 \dot v_{(4, 2, 2, 4)}
\end{multline*}
to get $\dot c_{(1,2,2,1)}$.
\end{example}

\end{document}